\crefname{equation}{Equation}{}
\numberwithin{equation}{section} 
\numberwithin{table}{section} 
\numberwithin{figure}{section}
\theoremstyle{definition}
\newtheorem{definition}[equation]{Definition}
\newtheorem{remark}[equation]{Remark}
\newtheorem{example}[equation]{Example}
\newtheorem{notation}[equation]{Notation}
\theoremstyle{plain}
\newtheorem{lemma}[equation]{Lemma}
\newtheorem{proposition}[equation]{Proposition}
\newtheorem{prop}[equation]{Proposition}
\newtheorem{theorem}[equation]{Theorem}
\newtheorem{corollary}[equation]{Corollary}
\newtheorem*{warning}{Warning}
\newtheorem*{heuristic}{Heuristic}
\newtheorem{maintheorem}{Theorem} 
\newtheorem*{maincorollary}{Corollary}
\crefname{maintheorem}{Theorem}{Theorems}
\newcommand{\ul}[1]{\underline{#1}} 
\newcommand{\K}{\mathcal{K}} 
\newcommand{\Z}{\mathbb{Z}} 
\newcommand{\1}{\mathbbm{1}} 
\newcommand{\scrO}{\mathscr{O}} 
\newcommand{\C}{\mathscr{C}} 
\newcommand{\D}{\mathscr{D}} 
\renewcommand{\phi}{\varphi} 
\renewcommand{\epsilon}{\varepsilon} 
\newcommand{\upi}{\ul{\pi}}
\newcommand{\uA}{\underline{A}}
\newcommand{\sphere}{\mathbb{S}} 
\DeclareMathOperator{\coker}{coker} 
\DeclareMathOperator{\res}{res} 
\DeclareMathOperator{\tr}{tr} 
\DeclareMathOperator{\Lan}{Lan} 
\DeclareMathOperator{\RO}{RO} 
\DeclareMathOperator{\Pic}{Pic} 
\DeclareMathOperator{\Ho}{Ho} 
\DeclareMathOperator{\ev}{ev} 
\DeclareMathOperator{\qRes}{qRes} 
\DeclareMathOperator{\qInd}{qInd} 
\DeclareMathOperator{\Inf}{Inf} 
\DeclareMathOperator{\Hom}{Hom} 
\DeclareMathOperator{\DressName}{Dress} 
\DeclareMathOperator{\Spec}{Spec} 
\DeclareMathOperator{\im}{im} 
\DeclareMathOperator{\id}{id} 
\DeclareMathOperator{\End}{End} 
\DeclareMathOperator{\Sub}{Sub} 
\DeclareMathOperator{\Iso}{Iso} 
\DeclareMathOperator{\Fix}{Fix}
\DeclareMathOperator{\Dim}{Dim}
\newcommand{\Ab}{{\mathscr{A}\textnormal{b}}} 
\newcommand{\CRing}{{\textnormal{C}\mathscr{R}\textnormal{ing}}} 
\newcommand{\Mod}{{\mathscr{M}\textnormal{od}}} 
\newcommand{\Vect}{{\mathscr{V}\textnormal{ect}}} 
\newcommand{\Mack}{{\mathscr{M}\textnormal{ack}}} 
\newcommand{\Green}{{\mathscr{G}\textnormal{reen}}} 
\newcommand{\Set}{{\mathscr{S}\textnormal{et}}} 
\newcommand{\Sp}{\mathscr{S}\textnormal{p}} 
\newcommand{\CMon}{\textnormal{C}\mathcal{M}\textnormal{on}} 
\newcommand{\SymMonCat}{\mathscr{S}\textnormal{ym}\mathscr{M}\textnormal{on}\mathscr{C}\textnormal{at}} 
\newcommand{\B}{\mathscr{B}} 
\newcommand{\mylittlematrix}[1]
{\scalebox{1}{$\begin{pmatrix} #1 \end{pmatrix}$}}
\newcommand{\mymidsizematrix}[1]
{\scalebox{0.7}{$\begin{pmatrix} #1 \end{pmatrix}$}}
\newcommand{\mytinymatrix}[1]
{\scalebox{0.6}{$\begin{pmatrix} #1 \end{pmatrix}$}}
\newcommand{\adjunction}[4]{
\begin{tikzpicture}
\node (00) at (0,0) {#1};
\node (20) at (4,0) {#2};
\node at (2,0) {$\perp$};

\draw[->, out=15, in=165] (00) to node[above] {#3} (20);
\draw[->, out=195, in=-15] (20) to node[below] {#4} (00);
\end{tikzpicture}
}
\newcommand{\SigmaTwoMack}[4]{
\begin{tikzpicture}
\node (02) at (0,2) {#1};
\node (00) at (0,0) {#3};

\draw[->, bend right] (02) to node[left] {#2} (00);
\draw[->, bend right] (00) to node[right] {#4} (02);
\end{tikzpicture}
}
\newcommand{\tw}[1]{T_{#1}}
\newcommand{\twinv}[1]{T_{#1}^{\mathrm{inv}}}
\newcommand{\tres}[3]{\res^{#1}_{#2,#3}}
\newcommand{\Dress}[3]{\DressName(#1,#2;#3)}
\newcommand{\sfrac}[2]{{}^{#1}\!/_{\!#2}} 
\begin{document}

\title{Picard Groups in Equivariant Algebra and Stable Homotopy Theory}

\begin{abstract}
Traditionally, homotopy groups in $G$-equivariant stable homotopy theory have been graded over $\RO(G)$, the real representation ring of $G$. It is arguably more natural to grade homotopical structures over the Picard group of the equivariant stable homotopy category. Though there is a canonical map of abelian groups $\RO(G) \rightarrow \Pic(\Ho(\Sp^G))$ relating the two, this map is neither injective or surjective in general. Fausk, Lewis, and May give an algebraic expression of $\Pic(\Ho(\Sp^G))$ in terms of the Picard group of the Burnside ring $A(G)$, and this work suggests a folklore isomorphism between $\Pic(A(G))$ and $\Pic(\Mack_G)$. We prove the existence of this folklore isomorphism in the setting of finite groups, then leverage our analysis to prove a classification of invertible Mackey functors in the setting of finite abelian groups. As a consequence, we furnish a classification of invertible $A(G)$-modules again for $G$ a finite abelian group. 
\end{abstract}

\author{Jesse Keyes and Jordan Sawdy}

\maketitle
\tableofcontents

\section{Introduction}
Constructing the $G$-equivariant stable homotopy category, $\Ho(\Sp^G)$, involves inverting the real representation spheres associated to $G$ with respect to the smash product of $G$-spaces. Consequently, we can consider the homotopy of any object in $\Ho(\Sp^G)$ as $\RO(G)$-graded. Historically, the structure of $RO(G)$-graded homotopy groups has been used in homological settings when a theory of Poincar\'e duality is desired.
It is also known that $RO(G)$-graded homotopy groups detect weak equivalences for $G=C_2$, so often the choice to compute these groups is made to forego computing homotopy Mackey functors. On the other hand, $\Z$-graded homotopy  Mackey functors detect weak equivalences by definition. 
There is yet another group over which we can grade homotopy groups or Mackey functors: the Picard group of $\Ho(\Sp^G)$. Grading homotopy groups or Mackey functors over the Picard group of $\Ho(\Sp^G)$ is justified formally in that no choice is made in what invertible objects (up to isomorphism) we choose to grade homotopical structures over. 

Essentially, two choices are being made. The first choice is whether to regard $\Ho(\Sp^G)$ as enriched in abelian groups or in $G$-Mackey functors, though these are related via change of enrichment along the lax monoidal functor $\ev_{G/G}\colon \Mack_G \rightarrow \Ab$. The second choice comes from a more general formalism discussed throughout \cite{DuggerGrading}: given any additive (or $\Mack_G$-enriched) closed symmetric monoidal category $(\mathscr{C}, \otimes, \mathbbm{1})$, one can define a multi-graded abelian group (respectively, $G$-Mackey functor) $\pi_*(X)$ for any $X \in \mathscr{C}$. Importantly, the grading here can be over any subgroup $A \leq \Pic(\mathscr{C})$, where $\Pic(\mathscr{C})$ denotes the Picard group of $\mathscr{C}$. From this, it is clear that grading over $\Pic(\mathscr{C})$ is a canonical choice. 

In \cite[Theorem 0.1]{FLM}, Fausk, Lewis, and May give an algebraic description of $\Pic(\Ho(\Sp^G))$ as an extension of $\Pic(A(G))$ by some free abelian group, where $A(G)$ is the Burnside ring for $G$. In particular, they prove that there is an embedding 
\[ 
\Pic(A(G)) \coloneqq \Pic(\Mod_{A(G)}) \cong \Pic(\Mod_{\pi_0(\mathbb{S}_G)}) \hookrightarrow \Pic(\Ho(\Sp^G))
\] 
whose cokernel is given by some free abelian group. Further, $\Pic(A(G))$ is computed by tom Dieck and Petrie; see the discussion following \cite[Proposition 11.3]{HmtpyReps}. If one regards $\Ho(\Sp^G)$ as enriched in $\Mack_G$, the work of \cite{FLM} raises the question of whether an embedding 
\[
\Pic(\Mack_G) \cong \Pic(\Mod_{\underline{A}}) \cong \Pic(\Mod_{\underline{\pi}_0(\mathbb{S}_G)}) \hookrightarrow \Pic(\Ho(\Sp^G))
\]
with the same free cokernel exists. Such an embedding would imply an isomorphism between $\Pic(\Mack_G)$ and $\Pic(A(G))$. It is folklore that there exists an isomorphism $\Pic(\Mack_G) \rightarrow \Pic(A(G))$ induced by evaluating invertible Mackey functors at level $\sfrac{G}{G}$. 

\subsection{Outline of Results}

The first goal of this article is to give a precise account of the folklore statement above: for $G$ a finite group, we build an isomorphism $\Pic(A(G)) \to \Pic(\Mack_G)$. This analysis goes through $\Pic(\Ho(\Sp^G))$ along the same lines as \cite{FLM}. We define a full symmetric monoidal subcategory, $k\Mack_G$, of $\Mack_G$ consisting of $G$-Mackey functors which satisfy a finiteness condition analogous to that in \cite{FLM} for K\"unneth objects.        

\begin{maintheorem}
There is a split exact sequence 
\[ 0 \rightarrow \Pic(k\Mack_G) \rightarrow \Pic(\Ho(\Sp^G)) \xrightarrow{\Dim} \Z^r \rightarrow 0 \] 
in which the image of $\Pic(k\Mack_G)$ in $\Pic(\Ho(\Sp^G))$ is precisely the isomorphism classes of invertible K\"unneth objects in $\Ho(\Sp^G)$. In particular, we learn that $\Pic(A(G))$ is isomorphic to $\Pic(k\Mack_G)$.   
\end{maintheorem}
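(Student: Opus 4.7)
The plan is to parallel FLM's proof of the analogous split exact sequence featuring $\Pic(A(G))$ in place of $\Pic(k\Mack_G)$, replacing the $A(G)$-valued functor $\pi_0^G$ with the Mackey-functor-valued $\underline{\pi}_0 \colon \Ho(\Sp^G) \to \Mack_G$. Because $\underline{\pi}_0$ is lax symmetric monoidal (with respect to the box product on $\Mack_G$) and sends the equivariant sphere to the Burnside Mackey functor $\underline{A}$, it induces a group homomorphism on invertible objects whenever the lax comparison evaluates to an isomorphism; on an invertible K\"unneth spectrum $X$, the comparison $\underline{\pi}_0(X) \boxtimes \underline{\pi}_0(X^{-1}) \to \underline{\pi}_0(\mathbb{S}_G) = \underline{A}$ is an isomorphism by the defining K\"unneth property, so $\underline{\pi}_0(X) \in \Pic(\Mack_G)$.

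The first step is to verify that $\underline{\pi}_0$ sends invertible K\"unneth objects into the full subcategory $k\Mack_G$: the finiteness condition defining $k\Mack_G$ should be arranged precisely as the Mackey-functor-level shadow of FLM's spectrum-level K\"unneth condition, so this reduces to unpacking the two definitions. The second and main substantive step is realizability: given an invertible $\underline{M} \in k\Mack_G$, one must produce an invertible K\"unneth spectrum $X$ with $\underline{\pi}_0(X) \cong \underline{M}$, and show $X$ is determined by $\underline{M}$ up to equivalence. Following FLM's strategy for $\Pic(A(G))$, I would work stalkwise over $\Spec A(G)$: the K-finiteness of $\underline{M}$ makes it locally free of rank one, one constructs compatible local Moore-type models, and a gluing argument assembles the global $X$; uniqueness follows from the Mackey-functor analog of FLM's control of invertible K\"unneth spectra by their homotopy in degree zero. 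This realization provides the injective map $\Pic(k\Mack_G) \hookrightarrow \Pic(\Ho(\Sp^G))$ and identifies its image with the isomorphism classes of invertible K\"unneth objects.

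With those ingredients in hand, the remaining structure transports verbatim from FLM: the dimension homomorphism $\Dim \colon \Pic(\Ho(\Sp^G)) \to \Z^r$ records virtual integer dimensions of the fixed-point spectra over the conjugacy classes of subgroups of $G$, the kernel of $\Dim$ is shown to coincide with the invertible K\"unneth objects, and a splitting is given by sending a tuple of integers to a virtual representation sphere $S^V$ of matching dimensions. Assembling these pieces yields the split exact sequence. The in-particular clause then follows because both $\Pic(A(G))$ (via FLM) and $\Pic(k\Mack_G)$ (via the above construction) are canonically identified with the same subgroup $\ker(\Dim) \subseteq \Pic(\Ho(\Sp^G))$, so $\Pic(A(G)) \cong \Pic(k\Mack_G)$. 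The main obstacle throughout will be the realizability step, which requires the finiteness condition in the definition of $k\Mack_G$ to be chosen delicately: strong enough to support the stalkwise construction and gluing of an invertible K\"unneth spectrum, yet weak enough to contain every invertible Mackey functor we would want to land in.
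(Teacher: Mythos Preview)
Your overall architecture matches the paper's: produce an embedding $\Pic(k\Mack_G)\hookrightarrow\Pic(\Ho(\Sp^G))$ whose image is the invertible K\"unneth spectra, identify this image with $\ker(\Dim)$, split via representation spheres, and deduce $\Pic(k\Mack_G)\cong\Pic(A(G))$ by comparing images inside $\Pic(\Ho(\Sp^G))$. The divergence is in the realizability step, and there your proposal is both harder than necessary and mis-attributes the source. FLM does \emph{not} argue stalkwise over $\Spec A(G)$ with local Moore models and gluing; their Proposition~1.5 (and the paper's Theorem~4.8) proceeds by \emph{idempotent splitting}. Concretely: a K\"unneth Mackey functor $\underline{M}$ is by definition a retract of some $\underline{A}^{\oplus n}$, so the retraction furnishes an idempotent $e\in\End_{\Mack_G}(\underline{A}^{\oplus n})$. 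Because $\underline{\pi}_0$ induces an isomorphism $\End_{\Ho(\Sp^G)}(\mathbb{S}_G^{\vee n})\cong\End_{\Mack_G}(\underline{A}^{\oplus n})$, the idempotent lifts verbatim to the sphere level, and splitting it there produces a K\"unneth spectrum $E_{\underline{M}}$ with $\underline{\pi}_0(E_{\underline{M}})\cong\underline{M}$. Invertibility and independence of choices follow from the formula $\underline{\pi}_*(E_{\underline{M}}\wedge X)\cong\underline{M}\boxtimes\underline{\pi}_*(X)$.

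Your stalkwise plan, by contrast, runs into a conceptual snag: the phrase ``locally free of rank one over $\Spec A(G)$'' applies cleanly to $A(G)$-modules, not to Mackey functors, and translating localization at primes of $A(G)$ into operations on $\Mack_G$ (and then gluing spectra from local pieces) is genuinely delicate. It may be salvageable, but it is a detour around a one-line argument that the K\"unneth hypothesis hands you directly. The idempotent-splitting route also makes the comparison with FLM's $\Pic(A(G))$ embedding transparent, since both are literally the same construction read through $\pi_0^G$ versus $\underline{\pi}_0$.
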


To obtain an isomorphism $\Pic(k\Mack_G) \cong \Pic(\Mack_G)$, we show that the kernel of the dimension function above contains $\Pic(\Mack_G)$. 

\begin{maintheorem}
There exists an embedding $\Pic(\Mack_G) \hookrightarrow \Pic(\Ho(\Sp^G))$ whose image is contained in the kernel of the dimension function. Consequently, the inclusion $k\Mack_G \subseteq \Mack_G$ induces an isomorphism on Picard groups. 
\end{maintheorem}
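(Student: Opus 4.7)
The plan is to establish the ``consequently'' conclusion directly, by showing that every invertible Mackey functor already lies in $k\Mack_G$; the embedding and the constraint on its image then follow formally from Theorem A.

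First, for $\underline{M} \in \Pic(\Mack_G)$ with inverse $\underline{N}$ witnessed by an isomorphism $\varepsilon\colon \underline{M} \square \underline{N} \xrightarrow{\sim} \uA$, I would run the standard Picard dual-basis argument. Invertibility forces $\underline{M}$ to be dualizable with dual $\underline{N}$, and since $\uA$ is a compact generator of $\Mack_G$, the coevaluation $\uA \to \underline{M} \square \underline{N}$ factors through a finitely generated free summand. The associated evaluation pairings express $\underline{M}$ as a retract of a finite direct sum of copies of $\uA$. Hence $\underline{M}$ is a finitely generated projective $\uA$-module, and in particular is flat with respect to $\square$.

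Second, I would verify that this finite projectivity is exactly the condition needed to put $\underline{M}$ into $k\Mack_G$. The Künneth finiteness condition on a Mackey functor, imported from the spectrum-level condition of FLM via Theorem A, should reduce to finite generation together with flatness over $\uA$, both of which hold by the first step. On the spectrum side, flatness of $\underline{M}$ degenerates the Künneth-type spectral sequence for $\upi_*(H\underline{M} \smsh H\underline{N})$, confirming that $H\underline{M}$ defines a Künneth invertible object of $\Ho(\Sp^G)$ with inverse $H\underline{N}$.

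This yields the set-theoretic inclusion $\Pic(\Mack_G) \subseteq \Pic(k\Mack_G)$, which combined with the reverse inclusion induced by the fully faithful monoidal embedding $k\Mack_G \hookrightarrow \Mack_G$ shows that the inclusion of categories induces an isomorphism on Picard groups. Composing this identification with the Theorem A embedding produces the desired embedding $\Pic(\Mack_G) \hookrightarrow \Pic(\Ho(\Sp^G))$, automatically landing in $\ker \Dim$.

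The principal obstacle is the second step: pinning down the exact Künneth finiteness condition defining $k\Mack_G$ and confirming that it is implied by invertibility. Depending on how restrictive that condition is phrased, one may need a careful translation between $\Tor$-vanishing for $\uA$-modules and the spectrum-level Künneth hypothesis from FLM, and in particular verifying the relevant spectral sequence collapses at the Mackey functor level.
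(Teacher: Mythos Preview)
Your argument has a genuine gap at the very first step. The Burnside Mackey functor $\uA$ is \emph{not} a compact generator of $\Mack_G$: one has $\Hom(\uA,\underline{M})\cong \underline{M}(\sfrac{G}{G})$, so $\uA$ sees only the top level and vanishes on any nonzero Mackey functor concentrated below $\sfrac{G}{G}$. The compact projective generators of $\Mack_G$ are the representables $\uA_{\sfrac{G}{H}}=\;\uparrow_H^G\uA$ for all $H\leq G$. Running the dual-basis argument therefore exhibits an invertible $\underline{M}$ only as a retract of a finite sum $\bigoplus_i \uA_{\sfrac{G}{H_i}}$, i.e.\ as a dualizable (finitely generated projective) Mackey functor, \emph{not} as a retract of $\bigoplus_i \uA$. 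The paper explicitly notes that these two conditions differ in $\Mack_G$ (the $\uA_{\sfrac{G}{H}}$ for $H\lneq G$ are dualizable but not K\"unneth), so your step ``finite projectivity is exactly the condition needed to put $\underline{M}$ into $k\Mack_G$'' is precisely the nontrivial content of the theorem, not a routine verification.

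Your second paragraph does not repair this. Flatness of $\underline{M}$ over $\uA$ is automatic for any dualizable object and does not distinguish K\"unneth from merely dualizable; and the spectra $H\underline{M}$ you invoke are Eilenberg--MacLane spectra, which are not invertible in $\Ho(\Sp^G)$ and are not the objects the paper constructs. The paper instead lifts the idempotent presenting $\underline{M}$ to an idempotent on a wedge of induced sphere spectra $\bigvee_i \sfrac{G}{H_i}{}_+\wedge\sphere_G$, obtaining a \emph{weakly} K\"unneth invertible spectrum $E_{\underline{M}}$ with $\upi_0(E_{\underline{M}})\cong\underline{M}$. The key computation is then that $\pi_0(\Phi^H E_{\underline{M}})\cong\Phi^H(\downarrow_H^G\underline{M})\cong\Z$ for every $H$, forcing $\Dim(E_{\underline{M}})=0$. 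Since $\ker(\Dim)$ coincides with the image of $\Pic(k\Mack_G)$, this is what upgrades $\underline{M}$ from dualizable to K\"unneth. That passage through $\Ho(\Sp^G)$ and the dimension function is the missing idea in your proposal.
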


Following this identification of $\Pic(A(G))$ with $\Pic(\Mack_G)$, we build explicit representatives for the isomorphism classes of invertible Mackey functors, and our approach is largely inspired by that of \cite[Definition 4.5]{Angeltveit}. This classification furnishes that of invertible $A(G)$-modules, recovering the computation of $\Pic(A(G))$ given by \cite{HmtpyReps}. 

\begin{maintheorem}
There is an isomorphism 
\[ \twinv{G} \coloneqq \prod_{H \leq G} \left( \sfrac{\Z}{[G:H]} \right)^\times / \{\pm 1\} \rightarrow \Pic(\Mack_G) \]
given by sending $\alpha \in \twinv{G}$ to the corresponding twisted Burnside Mackey functor. That is, every invertible Mackey functor is isomorphic to a twisted Burnside Mackey functor.    
\end{maintheorem}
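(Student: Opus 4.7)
The plan is to first construct explicit twisted Burnside Mackey functors $\underline{A}_\alpha$ for each $\alpha \in \prod_{H\leq G}(\Z/[G:H])^\times$, then verify that the assignment $\alpha \mapsto \underline{A}_\alpha$ descends to a well-defined group homomorphism on the quotient by $\{\pm 1\}$, and finally establish both surjectivity and injectivity.

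For the construction in the spirit of \cite[Definition 4.5]{Angeltveit}, I would set $\underline{A}_\alpha(G/H) \coloneqq A(H)$ with the same restriction maps as the Burnside Mackey functor $\underline{A}$, but with the transfer maps $\tr_K^H$ twisted by units whose classes modulo $[H:K]$ are determined by the components of $\alpha$. Here the hypothesis that $G$ is abelian is critical: every subgroup is normal, each Weyl group $W_G H$ is cyclic of order $[G:H]$, and the double-coset formula collapses to a simple multiplicative relation, so one can check directly that the twisted transfers together with the untwisted restrictions satisfy the Mackey axioms precisely when the twisting data assemble into a compatible tuple in $\prod_H (\Z/[G:H])^\times$. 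The box product then satisfies $\underline{A}_\alpha \square \underline{A}_\beta \cong \underline{A}_{\alpha\beta}$, with $\underline{A}_{\mathbf{1}} = \underline{A}$, which simultaneously proves invertibility and the homomorphism property. The $\{\pm 1\}$ quotient reflects the sign ambiguity that appears when swapping the two tensor factors under the symmetry isomorphism, and one checks that negating every component of $\alpha$ yields an isomorphic twist.

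For surjectivity, I would take an arbitrary $[\underline{M}]\in\Pic(\Mack_G)$. Theorems~A and~B combined with evaluation at $G/G$ identify $\Pic(\Mack_G)\cong\Pic(A(G))$, so $\underline{M}(G/G)$ is an invertible $A(G)$-module. Restricting $\underline{M}$ along the inclusion $H\leq G$ produces an invertible $H$-Mackey functor, whose evaluation at $H/H$ shows that $\underline{M}(G/H)$ is likewise an invertible $A(H)$-module. The tom Dieck--Petrie computation in \cite{HmtpyReps} shows every invertible $A(H)$-module is projective of constant rank one over the ghost ring, so after choosing generators we can identify each $\underline{M}(G/H)$ with $A(H)$. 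Under such identifications, the restriction map $\res^H_K$ must carry the chosen generator of $\underline{M}(G/H)$ to a unit multiple of the generator of $\underline{M}(G/K)$, and rescaling the generators inductively along the subgroup lattice straightens the restrictions to the standard ones in $\underline{A}$. The transfers are then forced to take the twisted form encoded by some tuple $\alpha$, producing an explicit isomorphism $\underline{M}\cong\underline{A}_\alpha$.

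For injectivity, any isomorphism $\underline{A}_\alpha \to \underline{A}_\beta$ is given levelwise by $A(H)$-linear automorphisms $\phi_H$ of $A(H)$, hence multiplication by a global unit at each level; compatibility with the (untwisted) restriction maps forces these units to be coherent across the subgroup lattice, and compatibility with the twisted transfers then shows $\alpha\beta^{-1}$ lies in the diagonal image of $\{\pm 1\}$. The main obstacle will be Step of straightening the structure maps of an arbitrary invertible $\underline{M}$ to the normal form $\underline{A}_\alpha$, since this requires a careful simultaneous choice of generators for the $\underline{M}(G/H)$ compatible with the full system of restrictions; a cardinality check comparing $|\twinv{G}|$ to the Picard group as computed in \cite{HmtpyReps} provides a sanity check that no invertible Mackey functors are missed by the twisted Burnside construction.
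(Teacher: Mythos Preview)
Your overall architecture differs from the paper in two places. First, the paper twists the \emph{restriction} maps and leaves transfers untouched, whereas you propose the opposite; either convention can be made to work, but be aware that with your convention the $A(H)$-module structure on $\underline{A}_\alpha(G/H)$ (which is defined via $[H/K]\cdot m = \tr_K^H\res_K^H m$) is no longer the ring structure on $A(H)$, so your injectivity claim that ``$\phi_H$ is an $A(H)$-linear automorphism, hence multiplication by a global unit'' does not follow directly. Second, and more substantially, the paper reverses the logical order of your last two steps: it proves injectivity by an induction on the number of prime factors of $|G|$, using the split exact sequence coming from $\qInd_Q^G$ and $\Phi^N$ (Corollary~\ref{PicSplitting}), and then obtains surjectivity by the cardinality comparison with $\Pic(A(G))$ that you relegate to a ``sanity check.''

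Your proposed direct surjectivity argument has a genuine gap. Knowing that each $\underline{M}(G/H)$ is an invertible $A(H)$-module only tells you it is abstractly isomorphic to some twisted $A(H)$; it does not tell you the restriction map carries a chosen generator to a \emph{unit} multiple of a generator. Indeed, in the paper's twisted Burnside Mackey functors the restrictions are scaled by arbitrary integers $r_K/r_H$, not merely by $\pm 1$, so ``straightening the restrictions to the standard ones in $\underline{A}$'' by rescaling generators is impossible in general---it is exactly the twisting data you are trying to extract. You would need a much more delicate argument to put an arbitrary invertible $\underline{M}$ into normal form, and the paper avoids this entirely via the counting argument. Also, a minor correction: for $G$ abelian the Weyl group $W_G(H) = G/H$ need not be cyclic (take $G$ the Klein four group and $H = e$), so that remark should be dropped; what matters is only that the Weyl actions on the Burnside rings are trivial.
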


The main technical hurdle in justifying the folklore isomorphism $\Pic(\Mack_G) \rightarrow \Pic(A(G))$ is that the functor $\ev_{\sfrac{G}{G}}\colon \Mack_G \to \Mod_{A(G)}$ is not strong symmetric monoidal, so it need not send invertible objects to invertible objects in general. However, this can be rectified by restricting the domain, as shown in the following result: 

\begin{maintheorem}
The functor $\ev_{\sfrac{G}{G}}\colon k\Mack_G \rightarrow k\Mod_{A(G)}$ given by evaluation at $\sfrac{G}{G}$ is a strong symmetric monoidal equivalence of categories. That is, it induces an isomorphism 
\[ \Pic(k\Mack_G) \cong \Pic(k\Mod_{A(G)}) \cong \Pic(A(G)). \]
We note that K\"unneth objects in $\Mod_{A(G)}$ coincide with dualizable objects (i.e., finitely generated projective modules). 
\end{maintheorem}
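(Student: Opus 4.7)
The plan is to construct a strong symmetric monoidal left adjoint $\iota \colon \Mod_{A(G)} \to \Mack_G$ to $\ev_{\sfrac{G}{G}}$ and verify that $\iota$ and $\ev_{\sfrac{G}{G}}$ restrict to mutually inverse equivalences between $k\Mod_{A(G)}$ and $k\Mack_G$. Since the Burnside Mackey functor $\underline{A}$ is the representable Mackey functor on $\sfrac{G}{G}$, Yoneda identifies $\ev_{\sfrac{G}{G}}(M) \cong \Hom_{\Mack_G}(\underline{A}, M)$, so the left adjoint $\iota$ exists and is characterized by $\iota(A(G)^n) = \underline{A}^n$ with extension by colimits. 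Because $\underline{A}$ is the monoidal unit for $\Box$ and $\iota$ is colimit-preserving, a direct check on free modules shows $\iota$ is strong symmetric monoidal; in particular, $\iota$ carries dualizable $A(G)$-modules to dualizable Mackey functors, which lie in $k\Mack_G$.

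The unit $\eta_P \colon P \to \ev_{\sfrac{G}{G}}(\iota P)$ is the identity for $P = A(G)^n$, and since both $\iota$ and $\ev_{\sfrac{G}{G}}$ preserve retracts (every functor between idempotent-complete additive categories does), $\eta_P$ is an isomorphism for every finitely generated projective $P$. Hence $\iota$ restricts to a fully faithful functor $k\Mod_{A(G)} \to k\Mack_G$, and $\ev_{\sfrac{G}{G}}|_{k\Mack_G}$ is essentially surjective onto $k\Mod_{A(G)}$ with one-sided inverse $\iota$.

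The crux is verifying that the counit $\epsilon_M \colon \iota\ev_{\sfrac{G}{G}}(M) \to M$ is an isomorphism for every $M \in k\Mack_G$. The strategy is to realize $M$ as a retract of some $\underline{A}^n$: since $M(\sfrac{G}{G})$ is a finitely generated projective $A(G)$-module, choose a splitting idempotent $e$ on $A(G)^n$ with image $M(\sfrac{G}{G})$. The ring isomorphism $\End_{\Mack_G}(\underline{A}^n) \cong M_n(A(G))$ coming from Yoneda lifts $e$ to a Mackey idempotent $\tilde e$ on $\underline{A}^n$ whose image is $\iota(M(\sfrac{G}{G}))$, and this is to be identified with $M$ using the K\"unneth hypothesis.

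The main obstacle is exactly this last identification: the K\"unneth hypothesis on $M$ must force $M$ itself to be a retract of a free Mackey functor, not merely that $M(\sfrac{G}{G})$ is a retract of a free $A(G)$-module. We expect this to follow by evaluating the defining box-product compatibility of $M$ against the representable Mackey functors $\underline{A}(\sfrac{G}{H}, -)$ for varying $H \leq G$, which encode the restriction and transfer data of $M$; the K\"unneth identity then pins down the full Mackey structure of $M$ from its $\sfrac{G}{G}$-level, realizing $M$ as the splitting $\tilde e \underline{A}^n$. Once this retract description of $k\Mack_G$ is secured, strong symmetric monoidality of $\ev_{\sfrac{G}{G}}|_{k\Mack_G}$ transfers from $\iota$ across the equivalence, and the Picard group isomorphism $\Pic(k\Mack_G) \cong \Pic(A(G))$ is an immediate formal consequence.
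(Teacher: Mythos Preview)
Your approach is essentially the paper's: both use the adjunction $\underline{A}\otimes_{A(G)}(-) \dashv \ev_{\sfrac{G}{G}}$ and verify it restricts to an equivalence on K\"unneth objects, with the left adjoint strong symmetric monoidal and the unit an isomorphism on finitely generated projectives.

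However, you have misidentified the crux. By \emph{definition}, a K\"unneth object in $\Mack_G$ is a retract of some $\underline{A}^n$; this is not a consequence to be extracted from box-product compatibilities against representables, it is the starting hypothesis on $M$. So what you call the ``main obstacle'' dissolves immediately: write $M = \tilde{e}\,\underline{A}^n$ for an idempotent $\tilde{e} \in \End_{\Mack_G}(\underline{A}^n) \cong M_n(A(G))$, and then $\ev_{\sfrac{G}{G}}(M) = \tilde{e}\,A(G)^n$ and $\iota(\tilde{e}\,A(G)^n) = \tilde{e}\,\underline{A}^n = M$, so the counit is an isomorphism without further work. The paper phrases this same content as ``K\"unneth Mackey functors are generated at level $\sfrac{G}{G}$, hence maps out are determined there,'' which is exactly full faithfulness of $\ev_{\sfrac{G}{G}}$ on $k\Mack_G$; essential surjectivity is the unit isomorphism you already checked. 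For strong symmetric monoidality of $\ev_{\sfrac{G}{G}}$ on K\"unneth objects, the paper cites the K\"unneth characterization from \cite{FLM} (their Proposition~1.2), which is more direct than transporting the structure across the equivalence from $\iota$ as you propose, though your route also works.
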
 

\begin{maincorollary}
The map 
\[ \twinv{G} \rightarrow \Pic(k\Mack_G) \xrightarrow{\ev_{\sfrac{G}{G}}} \Pic(k\Mod_{A(G)}) \cong \Pic(A(G)) \]
sending $\alpha \in \twinv{G}$ to the corresponding twisted Burnside module is an isomorphism. That is, every invertible $A(G)$-module is isomorphic to a twisted Burnside module.
\end{maincorollary}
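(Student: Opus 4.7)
The plan is to assemble the corollary by composing the three isomorphisms already established. Theorem C provides
\[ \twinv{G} \xrightarrow{\;\cong\;} \Pic(\Mack_G), \]
sending each $\alpha$ to its twisted Burnside Mackey functor $\tw{\alpha}$. Theorem B then yields an isomorphism $\Pic(k\Mack_G) \cong \Pic(\Mack_G)$ via the inclusion, so each $\tw{\alpha}$, being invertible by Theorem C, is isomorphic to a K\"unneth object and represents a class in $\Pic(k\Mack_G)$. Finally, Theorem D says that $\ev_{\sfrac{G}{G}}$ restricts to a strong symmetric monoidal equivalence $k\Mack_G \to k\Mod_{A(G)}$, hence induces an isomorphism on Picard groups. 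The composite displayed in the statement is therefore a composition of three isomorphisms and is itself an isomorphism of abelian groups.

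To match the prescribed description on elements, I would verify that $\ev_{\sfrac{G}{G}}(\tw{\alpha})$, endowed with its induced $A(G)$-module structure, is precisely the twisted Burnside module attached to $\alpha$. This is a direct unwinding of definitions: the twisted Burnside $A(G)$-module is essentially constructed as the $G/G$-level of $\tw{\alpha}$, with the multiplication by elements of $A(G) \cong \tw{\alpha}(\sfrac{G}{G})$ before twisting encoded by the restriction, transfer, and conjugation data of the Mackey functor. The check is pure bookkeeping, with no real calculation beyond reading the two definitions side by side.

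The principal conceptual obstacle, already dispatched by the earlier theorems, is that $\ev_{\sfrac{G}{G}} \colon \Mack_G \to \Mod_{A(G)}$ is not strong symmetric monoidal on the whole category, so one cannot simply push classes in $\Pic(\Mack_G)$ forward through it. Restricting to the K\"unneth subcategories, together with Theorem B guaranteeing that no invertible Mackey functors are lost in this restriction, is precisely what forces the composite to be a well-defined isomorphism. Once this scaffolding is in hand, the corollary follows formally with no further computation, and as a byproduct recovers the tom Dieck--Petrie computation of $\Pic(A(G))$ in the abelian case in the form of a classification by twisted Burnside modules.
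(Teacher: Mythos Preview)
Your proposal is correct and follows essentially the same approach as the paper: compose the isomorphism $\twinv{G} \xrightarrow{\cong} \Pic(\Mack_G)$ from Theorem~C (via Theorem~B identifying $\Pic(k\Mack_G)$ with $\Pic(\Mack_G)$) with the isomorphism $\ev_{\sfrac{G}{G}}\colon \Pic(k\Mack_G) \xrightarrow{\cong} \Pic(A(G))$ from Theorem~D, and observe that the composite is $A(G)^{(-)}$ by the very definition of twisted Burnside modules as $\ev_{\sfrac{G}{G}}\underline{A}^\alpha$. The paper's proof is terser but identical in content; one small notational slip is that your ``$\tw{\alpha}$'' should be the twisted Burnside Mackey functor $\underline{A}^\alpha$, since in the paper $\tw{G}$ denotes the group of twists rather than a Mackey functor.
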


\subsection{Notation}
Throughout, $G$ will denote a finite group, unless specified otherwise. Let $(\C,\otimes,\mathbbm{1})$ be a symmetric monoidal category. 
\begin{itemize}
\item Categories.
	\begin{itemize}[-]
		\item $\CRing$: commutative rings
		\item $\Mod_R$: modules over a commutative monoid in some symmetric monoidal category
		\item $\Mack_G$: $G$-Mackey functors
		\item $\Green_G$: $G$-Green functors
		\item $\Ho(\Sp)$: the stable homotopy category
		\item $\Ho(\Sp^G)$: the $G$-equivariant stable homotopy category
		\item $\SymMonCat$: symmetric monoidal categories and strong symmetric monoidal functors
		\item $d\C$: full subcategory of dualizable objects in $(\C, \otimes, \mathbbm{1})$
		\item $k\C$: full subcategory of K{\"u}nneth objects in $(\C, \otimes, \mathbbm{1})$
		\item $\CMon(\C)$: category of commutative monoids in $(\C,\otimes,\mathbbm{1})$
	\end{itemize}
\item Functors. Let $J \leq H$ be finite groups and $N \lhd H$ with $q\colon G \twoheadrightarrow \sfrac{G}{N}$. 
	\begin{itemize}[-]
		\item $\uparrow_J^H\colon \Mack_H \rightarrow \Mack_J$: induction 
		\item $\downarrow_J^H\colon \Mack_J \rightarrow \Mack_H$: restriction 
		\item $\qInd_{\sfrac{H}{N}}^H\colon \Mack_{\sfrac{H}{N}} \rightarrow \Mack_H$: induction along a quotient map
		\item $\qRes_{\sfrac{H}{N}}^H\colon \Mack_{H} \rightarrow \Mack_{\sfrac{H}{N}}$: restriction along a quotient map
		\item $\Inf_{\sfrac{H}{N}}^H\colon \Mack_{\sfrac{H}{N}} \rightarrow \Mack_H$: inflation 
		\item $\Phi^N\colon \Mack_H \rightarrow \Mack_{\sfrac{H}{N}}$: $N$-geometric fixed points
		\item $\Phi^H\colon \Ho(\Sp^G)) \rightarrow \Ho(\Sp)$: $H$-geometric fixed points 
		\item $\Pic\colon \SymMonCat\rightarrow \Ab$: the Picard group
	\end{itemize}
\item Objects. 
	\begin{itemize}[-]
	\item $\tw{G} \coloneqq \prod_{H \leq G} \Z$
	\item $\twinv{G} \coloneqq \prod_{H \leq G} \left( \sfrac{\Z}{[G:H]} \right)^\times / \{ \pm 1 \}$
	\item $C(G)$: the additive group of functions from the set of conjugacy classes of subgroups of $G$ to $\Z$
	\end{itemize}
	
\end{itemize}

\subsection{Acknowledgments} 

We would like to thank Mike Hill for many insightful conversations concerning the various Picard groups arising throughout this article and for sharing many ideas which ultimately led to the decision to pursue the questions presented here. We would also like to thank Bert Guillou and David Mehrle for many helpful conversations and editorial comments. 

\section{Picard Groups} 

The Picard group is a fundamental invariant associated to symmetric monoidal categories, consisting of the isomorphism classes of invertible objects. We begin by recalling the notion of Picard groups. See \cite[Chapter XI]{MacLane} for an account of symmetric monoidal categories and strong symmetric monoidal functors. 

\begin{definition}\label{SymMonCat}
Let $(\C, \otimes, \mathbbm{1})$ be a symmetric monoidal category. The \textit{Picard group} of $\C$, denoted $\Pic(\C)$, is the abelian group consisting of isomorphism classes of invertible objects with respect to $\otimes$. An object $X \in \C$ is \textit{invertible, or $\otimes$-invertible,} if there exists an object $Y \in \C$ with $X \otimes Y \cong \1$. The group operation in $\Pic(\C)$ is induced by $\otimes$. 
\end{definition}

The full signature of a symmetric monoidal category includes more than we require in \cref{SymMonCat}, namely the structure of associators, unitors, etc. We suppress this data in the signatures of symmetric monoidal categories throughout and refer to this structure only if explicitly necessary.  

\begin{remark}\label{InvToAut}
An object $X \in (\C, \otimes, \mathbbm{1})$ is $\otimes$-invertible if and only if the endofunctor $(-) \otimes X$ is an automorphism of $\C$. 
\end{remark} 

As the Picard group is defined in terms of the monoidal structure, one would expect that $\Pic$ is functorial for strong symmetric monoidal functors as these functors are precisely those which preserve the structure of symmetric monoidal categories.  

\begin{proposition}\label{PicFunctorial}
Given a strong symmetric monoidal functor $F\colon \C \rightarrow \D$ between symmetric monoidal categories, there is an induced map of abelian groups $F\colon \Pic(\C) \rightarrow \Pic(D)$. 
\end{proposition}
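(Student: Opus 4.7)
The plan is to define the map on the level of isomorphism classes by $F([X]) \coloneqq [F(X)]$, and then verify that this assignment lands in $\Pic(\mathscr{D})$, is well-defined on isomorphism classes, and respects the group operation. All of these will follow essentially formally from the strong monoidal structure of $F$, which supplies natural isomorphisms $\mu_{X,Y}\colon F(X) \otimes F(Y) \xrightarrow{\sim} F(X \otimes Y)$ and $\epsilon\colon \mathbbm{1}_{\mathscr{D}} \xrightarrow{\sim} F(\mathbbm{1}_{\mathscr{C}})$.

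First I would verify that $F$ preserves invertibility. Given an invertible $X \in \mathscr{C}$ with a chosen inverse $Y \in \mathscr{C}$ and isomorphism $\eta\colon X \otimes Y \xrightarrow{\sim} \mathbbm{1}_{\mathscr{C}}$, the composite
\[
F(X) \otimes F(Y) \xrightarrow{\mu_{X,Y}} F(X \otimes Y) \xrightarrow{F(\eta)} F(\mathbbm{1}_{\mathscr{C}}) \xrightarrow{\epsilon^{-1}} \mathbbm{1}_{\mathscr{D}}
\]
exhibits $F(Y)$ as an inverse to $F(X)$ in $\mathscr{D}$. Hence $[F(X)] \in \Pic(\mathscr{D})$. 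Well-definedness on isomorphism classes is immediate: if $X \cong X'$ in $\mathscr{C}$, then applying the functor $F$ yields $F(X) \cong F(X')$ in $\mathscr{D}$, so $[F(X)] = [F(X')]$.

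Next I would check that the induced map is a group homomorphism. For invertible $X, X' \in \mathscr{C}$, the strong monoidal coherence isomorphism $\mu_{X,X'}\colon F(X) \otimes F(X') \xrightarrow{\sim} F(X \otimes X')$ shows
\[
F([X] \cdot [X']) = [F(X \otimes X')] = [F(X) \otimes F(X')] = [F(X)] \cdot [F(X')],
\]
and the unit isomorphism $\epsilon\colon \mathbbm{1}_{\mathscr{D}} \xrightarrow{\sim} F(\mathbbm{1}_{\mathscr{C}})$ gives $F([\mathbbm{1}_{\mathscr{C}}]) = [\mathbbm{1}_{\mathscr{D}}]$, so the identity is preserved.

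There is no real technical obstacle here; the proof is essentially a bookkeeping exercise, and the only subtle point worth emphasizing is that strong (rather than merely lax) monoidality is exactly what is needed to ensure that $F(X) \otimes F(Y)$ is isomorphic to $F(X \otimes Y)$, which is the key ingredient both for sending invertible objects to invertible objects and for the homomorphism property. Looking ahead, this subtlety is precisely the reason that the non-strong evaluation functor $\ev_{G/G}\colon \Mack_G \to \Mod_{A(G)}$ cannot be applied directly to obtain the folklore isomorphism, motivating the restriction to K\"unneth objects described in the main theorems.
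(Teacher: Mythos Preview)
Your proof is correct and follows the same approach as the paper, which gives only a one-sentence proof (``Strong symmetric monoidality of a functor implies the preservation of invertibility for objects''). You have simply unpacked the details that the paper leaves implicit, including the well-definedness and homomorphism checks.
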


\begin{proof}
Strong symmetric monoidality of a functor implies the preservation of invertibility for objects.
\end{proof}

We will see an application of this fact in the context of equivariant algebra, where several change of groups functors are strong symmetric monoidal functors. 

A class of examples immediately arise from commutative monoids in symmetric monoidal categories. We will denote the category of commutative monoids in a symmetric monoidal category, $\C$, by $\CMon(\C)$. If $\C$ is cocomplete and the functor $R \otimes (-)$ preserves coequalizers for $R \in \CMon(\C)$, then $\Mod_R$ inherits a symmetric monoidal structure via the usual construction using coequalizers. We will denote this symmetric monoidal product by $(-) \otimes_R (-)$. For details, see \cite[Lemma 2.2.2]{HSS}. 

\begin{definition}
Let $(\C, \otimes, \1)$ be a cocomplete, symmetric monoidal category and $R$ be a commutative monoid in $\C$ such that $R \otimes (-)$ preserves coequalizers. The \textit{Picard group} of $R$, denoted $\Pic(R)$, is the Picard group of the symmetric monoidal category $\Mod_R$ of modules over $R$ equipped with $\otimes_R$. 
\end{definition}

For example, one could consider $R \in \CRing \simeq \CMon(\Ab)$ or $\underline{R} \in \Green_G \simeq \CMon(\Mack_G)$, where $\Green_G$ is the symmetric monoidal category of $G$-Green functors. In the case that $R \cong \1$, we have $\Pic(R) \coloneqq \Pic(\Mod_R) \cong \Pic(\C)$, since the unit in any symmetric monoidal category is uniquely a commutative monoid and every object in $\C$ is canonically a module over the unit. 

\begin{remark}\cite[III,4.5]{Hartshorne}
The Picard groups arising from commutative rings in this way can be thought of algebro-geometrically as a higher analog of the group of units. Let $X = \Spec(R)$ with its usual structure sheaf $\scrO_X$. Then,
\begin{eqnarray*}
R^\times &\cong& H^0(X; \scrO^*_X) \\
\Pic(R) &\cong& H^1(X; \scrO^*_X).
\end{eqnarray*} 
\end{remark} 

\begin{example}
The following symmetric monoidal categories have trivial Picard groups: $(\Set, \times, \{*\})$, $(\Mod_\Z,\otimes, \Z)$, $(\Vect_k, \otimes_k, k)$. 
\end{example}

Often, of course, Picard groups are non-trivial. We find many examples in the world of stable homotopy theory. The stable homotopy category $\Ho(\Sp)$ of spectra equipped with its usual smash product and unit given by the sphere spectrum $\Sigma^\infty S^0$ is a symmetric monoidal category. The spectrum $\Sigma^\infty S^1$ is a non-trivial invertible object in $\Ho(\Sp)$ by \cref{InvToAut}, since suspension is an automorphism of the stable homotopy category.  Moreover, every invertible spectrum is weakly equivalent to some (de)suspension of the sphere.  

\begin{proposition}
The Picard group of the stable homotopy category is isomorphic to $\Z$, with generator given by $\Sigma^\infty S^1$, via a generalized degree map. 
\end{proposition}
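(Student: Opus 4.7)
The plan is to construct a homomorphism $\deg\colon \Pic(\Ho(\Sp)) \to \mathbb{Z}$ and show it is an isomorphism with inverse sending $n$ to the class of $\Sigma^\infty S^n$. Given an invertible spectrum $X$ with inverse $X^{-1}$, the first observation is that $X$ is strongly dualizable, hence a finite spectrum, so $H_*(X;\mathbb{Q})$ is a finite-dimensional graded $\mathbb{Q}$-vector space. The K\"unneth isomorphism together with $X \wedge X^{-1} \simeq \Sigma^\infty S^0$ yields
\[ H_*(X;\mathbb{Q}) \otimes_\mathbb{Q} H_*(X^{-1};\mathbb{Q}) \cong H_*(\Sigma^\infty S^0;\mathbb{Q}) = \mathbb{Q}, \]
concentrated in degree $0$. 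A dimension count forces $H_*(X;\mathbb{Q})$ to be one-dimensional and concentrated in a single degree; define $\deg(X)$ to be this degree. Functoriality of rational homology and the K\"unneth formula make $\deg$ a group homomorphism, and surjectivity is immediate from $\deg(\Sigma^\infty S^n) = n$.

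For injectivity, suppose $\deg(X) = 0$; the goal is to show $X \simeq \Sigma^\infty S^0$. Applying the same K\"unneth argument to mod-$p$ homology shows $H_*(X;\mathbb{F}_p)$ is one-dimensional and concentrated in a single degree $n_p$ for each prime $p$. Since $X$ is finite it is bounded below; let $m$ denote its connectivity, so $H_m(X;\mathbb{Z}) \neq 0$ and $H_i(X;\mathbb{Z}) = 0$ for $i < m$. The universal coefficient theorem, combined with the vanishing of $H_{m-1}(X;\mathbb{Z})$, gives $H_m(X;\mathbb{F}_p) \cong H_m(X;\mathbb{Z}) \otimes \mathbb{F}_p \neq 0$, forcing $n_p = m$; comparing with the rational picture yields $m = 0$ and $H_0(X;\mathbb{Z}) \cong \mathbb{Z}$. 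A further universal coefficient argument then shows that all higher $p$-torsion is obstructed by the fact that $H_i(X;\mathbb{F}_p) = 0$ for $i > 0$, so $H_i(X;\mathbb{Z}) = 0$ for $i \neq 0$.

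With the integral homology of $X$ identified with that of $\Sigma^\infty S^0$, the Hurewicz theorem provides an isomorphism $\pi_0(X) \cong H_0(X;\mathbb{Z}) \cong \mathbb{Z}$, and the generator determines a map $\Sigma^\infty S^0 \to X$ inducing an isomorphism on integral homology. The homological Whitehead theorem for bounded-below spectra then upgrades this to a weak equivalence, completing the proof. The main obstacle is the joint arithmetic bookkeeping across rational and mod-$p$ homology needed to pin down the connectivity of $X$, align the degrees $n_p$ across all primes with the rational degree, and eliminate integral torsion; once this coordination is carried out, the Hurewicz-Whitehead conclusion is routine.
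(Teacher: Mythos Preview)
The paper does not give a self-contained proof here; it simply refers the reader to the discussion following \cite[Definition 1.1]{HMS}. Your argument follows the standard line of that reference and is essentially correct, but there is one step that does not go through as written.

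The claim ``$H_m(X;\mathbb{F}_p) \cong H_m(X;\mathbb{Z}) \otimes \mathbb{F}_p \neq 0$'' is not justified for an arbitrary prime $p$: if $H_m(X;\mathbb{Z})$ were pure $q$-torsion for a single prime $q$, the tensor with $\mathbb{F}_p$ would vanish for every $p \neq q$, and you would not be able to conclude $n_p = m$ for all $p$. The clean fix is to reverse the order of the argument. Since $\deg(X)=0$, the group $H_0(X;\mathbb{Z})$ has rank one, so the universal coefficient theorem gives $H_0(X;\mathbb{F}_p) \neq 0$ for \emph{every} prime $p$; hence $n_p = 0$ for all $p$. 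Now if $m < 0$, then $H_m(X;\mathbb{Z})$ is nonzero torsion; choosing a prime $q$ dividing its order yields $H_m(X;\mathbb{F}_q) \neq 0$, contradicting $n_q = 0$. This forces $m = 0$, after which the Tor argument you sketch rules out torsion in $H_0$ and kills the higher integral homology. With this reordering your Hurewicz--Whitehead conclusion goes through.
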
  

\begin{proof}
See the discussion following \cite[Definition 1.1]{HMS}.
\end{proof}

In chromatic homotopy theory, much work has gone into understanding the Picard group of the $K(n)$-local stable homotopy category, where $K(n)$ is Morava K-theory at height $n$, and an approximation of these groups. An introduction to this problem is given in \cite{HMS}, and the aforementioned approximation of these Picard groups is computed in \cite[Theorem A, Theorem B]{BSSW}.

\section{Equivariant Algebra} 
\label{Equivariant Algebra}

Here, we recall the basic notions concerning Mackey functors, change of groups, and modules over the Burnside ring arising from Mackey functors. For more detailed references concerning the notions throughout this section, see \cite{ThevenazWebb}. 

\subsection{Mackey functors}

\begin{definition}
A \textit{$G$-Mackey functor} is an additive functor $\underline{M} \in [\B^{op}_G, \Ab]$, where $\B_G$ is the \textit{Burnside category} associated to $G$. The Burnside category can be obtained by taking the category of finite $G$-sets with morphisms given by spans of finite $G$-sets and group completing the $\Hom$-monoids. See \cite[Section 2]{ThevenazWebb} for an account of the Burnside category. 
\end{definition}

Equivalently, a Mackey functor $\underline{M} \in \Mack_G$ is the data of 
\begin{itemize}
\item an abelian group $\underline{M}(\sfrac{G}{H})$ for each $H \leq G$
\item maps $\res_J^H\colon \underline{M}(\sfrac{G}{H}) \rightarrow \underline{M}(\sfrac{G}{J})$ for $J \leq H \leq G$, which we call \textit{restriction} maps
\item maps $\tr_J^H\colon \underline{M}(\sfrac{G}{J}) \rightarrow \underline{M}(\sfrac{G}{H})$ for $J \leq H \leq G$, which we call \textit{transfer} maps
\item maps $c_g\colon \underline{M}(\sfrac{G}{H}) \rightarrow \underline{M}(\sfrac{G}{H^g})$ for any $g \in G$, which we call \textit{conjugation} maps
\end{itemize} 
such that the following conditions hold for all $L,J \leq K \leq H \leq G$: 
\begin{enumerate}[(i)]
\item $\res_H^H = \id_{\underline{M}(\sfrac{G}{H})}$
\item $\res_J^K \circ \res_K^H = \res_J^H$
\item $\tr_H^H = \id_{\underline{M}(\sfrac{G}{H})}$
\item $\tr_K^H \circ \tr_J^K = \tr_J^H$
\item $c_g \circ c_h = c_{gh}$ for all $h,g \in G$
\item conjugation maps commute appropriately with restrictions and transfers
\item the double-coset formula is satisfied: 
\[ \res_L^H \tr_J^H = \sum\limits_{JhL \in J \backslash H / L} \tr_{J^{h^{-1}} \cap L}^L \circ c_{h^{-1}} \circ \res_{J \cap L^h}^J. \]
\end{enumerate} 
The conjugation maps yield an action of the Weyl group of $H$ in $G$, denoted $W_G(H)$, on $\underline{M}(\sfrac{G}{H})$. As we mostly restrict our attention to finite abelian groups, it is worth noting that $W_G(H)$ is isomorphic to $\sfrac{G}{H}$ for any subgroup $H \leq G$. 

This definition allows the use of so-called ``Lewis-diagrams", a diagrammatic expression of the data above which we use in the following examples and often throughout this article. One natural example of a $G$-Mackey functor stems from the Burnside rings, $A(G)$, a class of rings already equipped with the data constituting a Mackey functor. 

\begin{definition}
Let $G$ be a finite group. The \textit{Burnside ring} of $G$, denoted $A(G)$, is the Grothendieck completion of the semiring of isomorphism classes of finite $G$-sets equipped with disjoint union and cartesian product as its operations. A cartesian product of sets equipped with a $G$-action is equipped with the diagonal action by $G$. 
\end{definition}

Recall that every finite $G$-set decomposes as a disjoint of finite, transitive $G$-sets. This gives an isomorphism of abelian groups 
\[ A(G) \cong \bigoplus\limits_{(H) \in \sfrac{\Sub(G)}{\text{conj}}} \Z\{[\sfrac{G}{H}]\}, \]
where the sum is indexed over conjugacy classes of subgroups of $G$. We can give a few examples  for which $A(G)$ has a pleasant ring structure. 

\begin{example}
Let $G = C_p$, the cyclic group of order $p$ for $p$ a prime. Then, 
\[ A(C_p) \cong \sfrac{\Z[x]}{(x^2-px)}, \]
where $\sfrac{C_p}{e}$ corresponds to $x$. 
\end{example}

As mentioned, the Burnside rings $\{A(G)\}_{H \leq G}$ come equipped with natural maps called restrictions and transfers. 

\begin{definition}
Let $J \leq H$ be an inclusion of finite groups. The map 
\[ \res_J^H\colon A(H) \rightarrow A(J) \]
given by restricting $H$-sets to $J$-sets is a ring map. We will refer to this map as the \textit{restriction from $H$ to $J$}. Similarly, we have a map 
\[ \tr_J^H\colon A(J) \rightarrow A(H) \]
given by $X \mapsto H \times_J X$. We will refer to this map as the \textit{transfer from $J$ to $H$}. The transfer is \textit{not} a ring map, but it is an $A(H)$-module map when viewing $A(J)$ as an $A(H)$-module via the restriction map. 

Lastly, let $H, H^g \leq G$ be conjugate subgroups for $g \in G$. There is an isomorphism
\[ c_g\colon A(H) \rightarrow A(H^g) \]
which takes an $H$-set to the $H^g$-set given by pulling back along the isomorphism $H^g \rightarrow H$ given by conjugation by $g^{-1}$.  
\end{definition}

\begin{definition}
The \textit{Burnside Mackey functor} for $G$, denoted $\underline{A}$ is the Mackey functor given by 
\begin{itemize}
\item $\underline{A}(\sfrac{G}{H}) \coloneqq A(H)$ 
\item $\res_J^H\colon A(H) \rightarrow A(J)$ given by restriction of $H$-sets
\item $\tr_J^H\colon A(J) \rightarrow A(H)$ given by the transfer of $J$-sets
\item $c_g\colon A(H) \rightarrow A(H^g)$ is given by conjugation of $H$-sets. 
\end{itemize}
\end{definition}

\begin{example} 
Let $G=C_p$. The Burnside Mackey functor for $C_p$ has the following Lewis diagram: 
\[ \SigmaTwoMack{$A(C_p) \cong \Z\{\sfrac{C_p}{e}, \sfrac{C_p}{C_p}\}$}{$\begin{mylittlematrix}{p & 1}\end{mylittlematrix}$}{$A(e) \cong \Z.$}{$\begin{mylittlematrix}{1 \\ 0}\end{mylittlematrix}$} \]
Here, we use matrices to denote maps between free abelian groups with ordered basis as depicted. 
\end{example}

\begin{example}
Let $G = C_2$. Given a $\Z$-module $M$, we can define the \textit{constant $C_2$-Mackey functor at $M$}, denoted $\underline{M}$, via the Lewis-diagram
\[ \SigmaTwoMack{$M$}{$1$}{$M.$}{$2$} \]
The restriction and transfer denote multiplication by 1 (the identity) and multiplication by 2, respectively, while the conjugation maps are all trivial. This definition can be extended to finite groups by defining $\underline{M}(\sfrac{G}{H}) \coloneqq M$, all restrictions to be the identity, and $\tr_J^H$ to be multiplication by $[H:J]$. The conjugation maps are all defined to be the identity.  

The \textit{dual constant $C_2$-Mackey functor at $M$}, denoted $\underline{M}^*$ is given by 
\[ \SigmaTwoMack{$M$}{$2$}{$M,$}{$1$} \]
where again the conjugation maps are trivial. Similarly, dual constant Mackey functors can be defined for general finite groups. 
\end{example}

While we have not discussed morphisms of Mackey functors, they can be easily described via Lewis diagrams. A morphism $\varphi\colon \underline{M} \rightarrow \underline{N}$ between Mackey functors is a collection of $W_G(H)$-equivariant maps $\{\varphi_H\colon \underline{M}(\sfrac{G}{H}) \rightarrow \underline{N}(\sfrac{G}{H})\}_{H \leq G}$ that commute with restrictions and transfers independently. The resulting category, $\Mack_G$, of $G$-Mackey functors can be given the structure of a symmetric monoidal product via a Day convolution construction.

\begin{definition}
Let $\underline{M}$ and $\underline{N}$ be $G$-Mackey functors. The \textit{box product} of $\underline{M}$ and $\underline{N}$, denoted $\underline{M} \boxtimes \underline{N}$, is the left Kan extension of the pointwise-tensor of $\underline{M}$ and $\underline{N}$ along the cartesian product of $G$-sets: 
\begin{center}
\begin{tikzpicture}
\node (00) at (0,0) {$\B_G^{op} \times \B_G^{op}$};
\node (0-2) at (0,-2) {$\B_G^{op}$}; 
\node (30) at (3,0) {$\Ab$};

\draw[->] (00) to node[above] {$\underline{M} \otimes \underline{N}$} (30); 
\draw[->] (00) to node[left] {$\times$} (0-2);
\draw[->, dashed] (0-2) to node[below right] {$\underline{M} \boxtimes \underline{N}$} (30); 
\end{tikzpicture}
\end{center}
\end{definition}

Unpacking the definition of the box product allows a nice description of maps $\underline{M} \boxtimes \underline{N} \rightarrow \underline{P}$ for arbitrary Mackey functors $\underline{M}, \underline{N}, \underline{P} \in \Mack_G$ \cite[Lemma 2.17]{Shulman}. We will use a slightly improved version of this universal property and borrow notation from \cite{Shulman}.

\begin{lemma}
\label{MapsOutOfBoxProduct}
Let $G$ be a finite group and $\underline{M}$, $\underline{N}$, and $\underline{P}$ be $G$-Mackey functors. The data of a map $\theta\colon \underline{M} \boxtimes \underline{N} \rightarrow \underline{P}$ is equivalent to a collection of maps $\{\theta_H\colon \underline{M}(\sfrac{G}{H}) \otimes \underline{N}(\sfrac{G}{H}) \rightarrow \underline{P}(\sfrac{G}{H})\}_{H \leq G}$ such that the following diagrams commute for all $J \leq H \leq G$ and $g \in W_G(H)$: 
\[
\begin{tikzpicture}
\node (00) at (0,0) {$\underline{M}(\sfrac{G}{J}) \otimes \underline{N}(\sfrac{G}{J})$};
\node (03) at (0,3) {$\underline{M}(\sfrac{G}{H}) \otimes \underline{N}(\sfrac{G}{H})$};
\node (40) at (4,0) {$\underline{P}(\sfrac{G}{J})$};
\node (43) at (4,3) {$\underline{P}(\sfrac{G}{H})$};

\draw[->] (03) to node[left] {$\res_J^H \otimes \res_J^H$} (00);
\draw[->] (00) to node[below] {$\theta_J$} (40);

\draw[->] (03) to node[above] {$\theta_H$} (43);
\draw[->] (43) to node[right] {$\res_J^H$} (40);
\end{tikzpicture}
\qquad 
\begin{tikzpicture}
\node (00) at (0,0) {$\underline{M}(\sfrac{G}{H}) \otimes \underline{N}(\sfrac{G}{H})$};
\node (03) at (0,3) {$\underline{M}(\sfrac{G}{J}) \otimes \underline{N}(\sfrac{G}{J})$};
\node (40) at (4,0) {$\underline{P}(\sfrac{G}{H})$};
\node (43) at (4,3) {$\underline{P}(\sfrac{G}{J})$};
\node (-11) at (-1.5,1.5) {$\underline{M}(\sfrac{G}{J}) \otimes \underline{N}(\sfrac{G}{H})$};

\draw[->] (00) to node[below] {$\theta_H$} (40);
\draw[->] (03) to node[above] {$\theta_J$} (43);
\draw[->] (43) to node[right] {$\tr_J^H$} (40);

\draw[->] (-11) to node[left=1ex] {$\id \otimes \res_J^H$} (03);
\draw[->] (-11) to node[left=1ex] {$\tr_J^H \otimes \id$} (00);
\end{tikzpicture}
\]
\[
\begin{tikzpicture}
\node (00) at (0,0) {$\underline{M}(\sfrac{G}{H}) \otimes \underline{N}(\sfrac{G}{H})$};
\node (03) at (0,3) {$\underline{M}(\sfrac{G}{H}) \otimes \underline{N}(\sfrac{G}{H})$};
\node (40) at (4,0) {$\underline{P}(\sfrac{G}{H})$};
\node (43) at (4,3) {$\underline{P}(\sfrac{G}{H})$};

\draw[->] (03) to node[left] {$c_g \otimes c_g$} (00);
\draw[->] (00) to node[below] {$\theta_H$} (40);

\draw[->] (03) to node[above] {$\theta_H$} (43);
\draw[->] (43) to node[right] {$c_g$} (40);
\end{tikzpicture}
\qquad
\begin{tikzpicture}
\node (00) at (0,0) {$\underline{M}(\sfrac{G}{H}) \otimes \underline{N}(\sfrac{G}{H})$};
\node (03) at (0,3) {$\underline{M}(\sfrac{G}{J}) \otimes \underline{N}(\sfrac{G}{J})$};
\node (40) at (4,0) {$\underline{P}(\sfrac{G}{H})$.};
\node (43) at (4,3) {$\underline{P}(\sfrac{G}{J})$};
\node (-11) at (-1.5,1.5) {$\underline{M}(\sfrac{G}{H}) \otimes \underline{N}(\sfrac{G}{J})$};

\draw[->] (00) to node[below] {$\theta_H$} (40);
\draw[->] (03) to node[above] {$\theta_J$} (43);
\draw[->] (43) to node[right] {$\tr_J^H$} (40);

\draw[->] (-11) to node[left=1ex] {$\res_J^H \otimes \id$} (03);
\draw[->] (-11) to node[left=1ex] {$\id \otimes \tr_J^H$} (00);
\end{tikzpicture}
\]
\end{lemma}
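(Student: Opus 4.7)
The plan is to unpack the universal property of the left Kan extension that defines $\boxtimes$ along the cartesian product functor $\times\colon \B_G^{op} \times \B_G^{op} \to \B_G^{op}$. By that universal property, a map $\theta\colon \underline{M} \boxtimes \underline{N} \to \underline{P}$ is equivalent to a natural transformation
\[ \tilde\theta_{X,Y}\colon \underline{M}(X) \otimes \underline{N}(Y) \to \underline{P}(X \times Y) \]
of bifunctors $\B_G^{op} \times \B_G^{op} \to \Ab$, i.e., a family of maps natural in both arguments. The goal is to show that such a bifunctor-level datum is compressed, without loss, into the diagonal family $\{\theta_H\}$ together with the four compatibility diagrams.

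First I would exploit additivity of $\underline{M}$, $\underline{N}$, $\underline{P}$ together with the orbit decomposition of any finite $G$-set to reduce $\tilde\theta$ to its components at pairs of orbits $(\sfrac{G}{H}, \sfrac{G}{K})$. Next I would use the $G$-set decomposition
\[ \sfrac{G}{H} \times \sfrac{G}{K} \cong \coprod_{HgK \in H\backslash G / K} \sfrac{G}{H \cap K^{g^{-1}}} \]
together with naturality of $\tilde\theta$ on the spans encoding the inclusions and projections in this decomposition to show that $\tilde\theta_{\sfrac{G}{H}, \sfrac{G}{K}}$ is entirely determined by the diagonal components $\theta_L \coloneqq \pi_\Delta \circ \tilde\theta_{\sfrac{G}{L}, \sfrac{G}{L}}$, where $\pi_\Delta\colon \underline{P}(\sfrac{G}{L} \times \sfrac{G}{L}) \to \underline{P}(\sfrac{G}{L})$ is the map induced by the diagonal $G$-map. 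In the forward direction each of the four diagrams then follows from naturality of $\tilde\theta$ with respect to the corresponding generating morphism of $\B_G^{op} \times \B_G^{op}$: diagonal restrictions yield the first diagram, transfers in the left factor yield the second, diagonal conjugations yield the third, and transfers in the right factor yield the fourth, with the transfer diagrams realizing the expected Frobenius-style push-pull formulas.

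For the reverse direction, given $\{\theta_H\}$ satisfying the four diagrams, I would define $\tilde\theta_{\sfrac{G}{H}, \sfrac{G}{K}}$ by the formula dictated by the reduction above: on each orbit summand $\sfrac{G}{H \cap K^{g^{-1}}}$ of $\sfrac{G}{H} \times \sfrac{G}{K}$, assemble the value from $\theta_{H \cap K^{g^{-1}}}$ together with the appropriate restrictions, conjugations, and transfers, and then sum over double cosets. The main obstacle is verifying that this assignment extends to a genuine natural transformation of bifunctors, not merely along diagonal morphisms. This check proceeds generator by generator in $\B_G^{op} \times \B_G^{op}$ and reduces, via the double-coset formula and the standard compatibilities between conjugations and (co)restrictions in a Mackey functor, to exactly the four relations encoded by the given diagrams; no further axioms are needed. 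Once naturality is confirmed, the universal property of the Kan extension returns the desired $\theta\colon \underline{M} \boxtimes \underline{N} \to \underline{P}$, and the resulting correspondence between maps $\theta$ and families $\{\theta_H\}$ is bijective by construction.
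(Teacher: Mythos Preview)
Your proposal is correct and follows the natural direct route: unwind the Day-convolution universal property, reduce to orbits by additivity, and then show that naturality of the resulting bifunctor transformation in each Burnside-category variable is equivalent to the four displayed diagrams. The sketch is complete enough that a careful write-up along these lines would succeed; the only place requiring real care is exactly the one you flag, namely checking that naturality in a \emph{single} factor (not just diagonally) is recovered from the four conditions, and this does follow from the double-coset decomposition of $\sfrac{G}{H} \times \sfrac{G}{K}$ together with the Mackey axioms.

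By contrast, the paper does not carry out this argument. It simply invokes \cite[Lemma~2.17]{Shulman}, which already establishes the analogous bijection when the diagrams are stated for arbitrary finite $G$-sets $\mathbf{b}$, $\mathbf{c}$ and arbitrary equivariant maps $f\colon \mathbf{b} \to \mathbf{c}$, and then observes that additivity of Mackey functors lets one cut down from all finite $G$-sets to orbits $\sfrac{G}{H}$ and from all equivariant maps to subgroup inclusions $J \leq H$. So the paper's proof is a one-line reduction to an existing reference, whereas yours is effectively an outline of how one would prove that reference from scratch. Your approach is more self-contained and makes the mechanism visible; the paper's is shorter and offloads the combinatorics to Shulman.
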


\begin{proof}
An analogous result is given by \cite[Lemma 2.17]{Shulman}, where the orbits and maps appearing in the statement are replaced by arbitrary finite \(G\)-sets \(\mathbf{b}\) and \(\mathbf{c}\) and equivariant maps \(f\colon \mathbf{b} \to \mathbf{c}\). The desired result follows from leveraging the proof of \cite[Lemma 2.17]{Shulman} with the additivity of Mackey functors.
\end{proof}

\begin{definition}
Let $\underline{M}, \underline{N}$, and $\underline{P}$ be $G$-Mackey functors. Given a map $\theta\colon \underline{M} \boxtimes \underline{N} \rightarrow \underline{P}$, the \textit{Dress pairing} associated to $\theta$ is the collection of maps $\{\theta_H \colon \underline{M}(\sfrac{G}{H}) \otimes \underline{N}(\sfrac{G}{H}) \rightarrow \underline{P}(\sfrac{G}{H})\}_{H \leq G}$ described in \cref{MapsOutOfBoxProduct}. We will denote by $\Dress{\underline{M}}{\underline{N}}{\underline{P}}$ the set of Dress pairings of maps $\underline{M} \boxtimes \underline{N} \rightarrow \underline{P}$.  
\end{definition}

\subsection{Change of Groups}
Throughout this section, we let $G$ denote a finite group. Further, let $H \leq G$ and $N \trianglelefteq G$ with $Q = \sfrac{G}{N}$. There are several adjunctions relating the category of $G$-Mackey functors to the categories of $Q$-Mackey functors and $H$-Mackey functors which allow us to inductively approach a computation of $\Pic(\Mack_G)$.  We begin by recalling the usual induction-restriction adjunction. 

\begin{prop} 
There is an adjunction 
\[
\adjunction{$\Mack_G$}{$\Mack_H,$}{$\uparrow_H^G$}{$\downarrow_H^G$}
\]
where $\uparrow_H^G(\underline{M})(\sfrac{G}{J}) \coloneqq \underline{M}(\res_H^G(\sfrac{G}{J}))$ and $\downarrow_H^G(\underline{N})(\sfrac{H}{K}) \coloneqq \underline{N}(\sfrac{G}{K})$ for $\underline{M} \in \Mack_H$ and $\underline{N} \in \Mack_G$. The restrictions, transfers, and conjugations in both $\uparrow_H^G \underline{M}$ and $\downarrow_H^G \underline{N}$ are induced by those in $\underline{M}$ and $\underline{N}$, respectively. We will refer to $\uparrow_H^G$ and $\downarrow_H^G$ as the \textit{induction} and \textit{restriction} of Mackey functors, respectively. 
\end{prop}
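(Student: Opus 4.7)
The plan is to realize both functors as precomposition along additive functors between Burnside categories, and then derive the adjunction from the classical induction--restriction adjunction on finite $G$-sets.

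First, restriction of $G$-sets (together with its natural action on spans) defines an additive functor $\rho\colon \B_G \to \B_H$, and induction $G\times_H(-)$ defines an additive functor $\iota\colon \B_H \to \B_G$. Unwinding the definitions gives $\uparrow_H^G \underline{M} = \underline{M} \circ \rho^{op}$ and $\downarrow_H^G \underline{N} = \underline{N} \circ \iota^{op}$, so that both outputs are additive presheaves on the appropriate Burnside category, hence Mackey functors. The induced restrictions, transfers, and conjugations in the resulting Mackey functors are obtained by applying $\underline{M}$ or $\underline{N}$ to the images of the standard generating spans under $\rho$ or $\iota$.

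Next, establish the underlying adjunction $\iota \dashv \rho$ in $\B_H \rightleftarrows \B_G$. The unit $\eta_X\colon X \to \res_H^G(G\times_H X)$ and counit $\epsilon_Y\colon G\times_H \res_H^G Y \to Y$ come from the canonical equivariant maps on $H$-sets and $G$-sets, viewed as spans with identity backward leg. Since composition of such forward spans in the Burnside category coincides with ordinary composition of the underlying equivariant maps, the two triangle identities lift directly from the classical $\iota \dashv \rho$ adjunction in $G$-sets. Finally, invoke the general fact that an adjunction $F \dashv G$ between small additive categories induces an adjunction $G^* \dashv F^*$ between the additive presheaf categories via precomposition (obtained by conjugating one hom description into the other using the unit and counit). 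Applied to $\iota \dashv \rho$, this yields $\rho^* \dashv \iota^*$, which is exactly $\uparrow_H^G \dashv \downarrow_H^G$.

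The main technical point is ensuring that $\rho$ and $\iota$ are well-defined additive functors on the Burnside categories in the first place, i.e.\ that they preserve composition of spans (which is computed by pullback). One must verify that pullback of spans commutes with both restriction and induction of $G$-sets; the argument unwinds via the double-coset decomposition $\res_H^G(G/J) = \bigsqcup_{HgJ \in H\backslash G/J} H/(H\cap{}^gJ)$, which controls how orbits of restricted $G$-sets split and hence how transfers propagate through $\uparrow_H^G$.
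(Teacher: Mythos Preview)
Your proposal is correct and follows essentially the same strategy as the paper's own proof, which consists of a single sentence: realize $\uparrow_H^G$ and $\downarrow_H^G$ as precomposition with the restriction and induction functors on Burnside categories, observe that those Burnside-level functors are adjoint, and deduce the adjunction on Mackey functors. Your write-up simply fleshes out the details the paper omits (the unit/counit description, the triangle identities, and the general principle that an adjunction $F \dashv G$ induces $G^* \dashv F^*$ on presheaves).
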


\begin{proof}
The induction and restriction of Mackey functors are obtained by pre-composing with the restriction and induction functors on Burnside categories, respectively, and these functors on Burnside categories are appropriately adjoint so as to induce the desired adjunction. 
\end{proof}

\begin{example}
The induction of $\Z \in \Ab \simeq \Mack_e$ to the cyclic group of order two, $C_2$, has the following Lewis diagram: 
\[ \SigmaTwoMack{$\Z$}{$\Delta$}{$\Z[C_2].$}{$\nabla$} \]
Here, $\Z[C_2]$ is the free abelian group with generators given by the elements of $C_2$ and $C_2$-action given by multiplication in $C_2$. Then, $\Delta$ is the diagonal map which picks out the sum of the basis elements and $\nabla$ is the ``fold" map which sends every basis element to $1$. 
\end{example}

This examples demonstrates the more general failure of $\uparrow_H^G$ to be strong symmetric monoidal for $H \lneq G$ as it does not preserve the unit up to isomorphism. However, $\downarrow_H^G$ is always a strong symmetric monoidal functor. In particular, this implies that the restriction of Mackey functors induces a map 
\[ \downarrow_H^G\colon \Pic(\Mack_G) \rightarrow \Pic(\Mack_H). \]
as in \cref{PicFunctorial}. 

The restriction of Mackey functors also has a right adjoint, \textit{coinduction}, which agrees with induction. This generalizes the same phenomenon we see in representations of finite groups. We now turn our view towards a few adjunctions which witness the close relationship between $\Mack_G$ and $\Mack_Q$. The change of groups between $G$ and its various quotients behaves much more nicely concerning invertibility.

\begin{definition}
Let $q^*\colon \B_Q^{op} \rightarrow \B_G^{op}$ denote the functor between Burnside categories given by viewing $Q$-sets and spans of $Q$-sets as $G$-sets and spans of $G$-sets, respectively, via pulling back along the quotient map $q\colon G \twoheadrightarrow Q$. We get a functor $\qRes_Q^G\colon \Mack_G \rightarrow \Mack_Q$ given by precomposition with $q^*$ which we will refer to as \textit{restriction along the quotient from $G$ to $Q$}. This functor is denoted by $\beta^!$ in \cite[Section 5]{ThevenazWebb} and goes by the name ``deflation" in various places throughout the literature. 
\end{definition} 

Conceptually, the restriction along a quotient $G \twoheadrightarrow \sfrac{G}{N}$ identifies the subgroups of $\sfrac{G}{N}$ with those in $G$ containing $N$ and simply forgets the data of $\underline{M}$ for subgroups not containing $\underline{N}$. We find the following visualization of the restriction along a quotient useful: 

\begin{heuristic} 
The restriction of a Mackey functor $\underline{M} \in \Mack_G$ to $\qRes_{\sfrac{G}{N}}^G \in \Mack_{\sfrac{G}{N}}$ can be viewed as follows: 
\[
\begin{tikzpicture}
\node at (-2.6,0) {$\underline{M} = $};

\draw[-] (0,2) to (-2,0);
\draw[-] (-2,0) to (0,-2);
\draw[-] (0,-2) to (2,0);
\draw[-] (2,0) to (0,2);

\draw[fill=blue!25] (-2,0) -- (0.3,-0.3) -- (0,2) -- cycle;

\node at (0.3,-0.3) {$\bullet$}; 
\node[below right] at (0.3,-0.3) {$N$};

\node at (0,2) {$\bullet$}; 
\node[above] at (0,2.1) {$G$};

\draw[-] (-2,0) to (0.3,-0.3);
\draw[-] (0,2) to (0.3,-0.3);

\draw[|->] (2.25,0) to (3.75,0);

\draw[fill=blue!25] (4,0) -- (6.3,-0.3) -- (6,2) -- cycle;

\node at (6.3,-0.3) {$\bullet$}; 
\node[below right] at (6.3,-0.3) {$\sfrac{N}{N}$};

\node at (6,2) {$\bullet$}; 
\node[above] at (6,2) {$\sfrac{G}{N}$};

\node at (7.6,0) {$= \qRes_{\sfrac{G}{N}}^G \underline{M}$};
\end{tikzpicture} 
\]
We suppress the Weyl actions in the above pictures for aesthetic reasons, but note that the actions of various Weyl groups in $\qRes_{\sfrac{G}{N}}^G \underline{M}$ follow from the isomorphisms $\sfrac{H}{J} \cong \sfrac{\sfrac{H}{N}}{\sfrac{J}{N}}$ for $N \leq J \leq H \leq G$. 
\end{heuristic} 

The restriction of Mackey functors along a quotient map has a strong symmetric monoidal left adjoint which we will denote by $\qInd_Q^G$. 

\begin{definition}
The \textit{induction of Mackey functors along a quotient map} $q\colon G \twoheadrightarrow Q$ is the functor  
\[ \qInd_Q^G (-) \coloneqq \Lan_{q^*} (-)\colon \Mack_Q \rightarrow \Mack_G. \]
This functor goes by the name of $\beta_!$ in \cite[Section 5]{ThevenazWebb}. We use the term ``induction along a quotient" in light of this functor being left adjoint to restriction along a quotient as in \cref{qIndResAdj}.  
\end{definition}

\begin{proposition}
The induction of Mackey functors along a quotient map is a strong symmetric monoidal functor $\Mack_Q \rightarrow \Mack_G$. 
\end{proposition}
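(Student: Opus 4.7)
The plan is to leverage the Day convolution structure on both $\Mack_Q$ and $\Mack_G$, together with the observation that $q^*\colon \B_Q \to \B_G$ is itself strong symmetric monoidal. Recall that the box product is defined as a left Kan extension along the cartesian product of Burnside categories, which is precisely the Day convolution of the tensor on $\Ab$ against $(\B_G, \times)$. The induction $\qInd_Q^G = \Lan_{q^*}$ is left adjoint to $\qRes_Q^G$, and in particular is cocontinuous.

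First, I would verify that $q^*\colon \B_Q \to \B_G$ preserves the cartesian product, both for objects and for spans. Given $Q$-sets $X$ and $Y$, the $G$-set $q^*(X \times Y)$ equipped with the diagonal $G$-action induced from $q$ is the same as $q^*(X) \times q^*(Y)$ with its diagonal $G$-action, since the action factors through $Q$. Moreover, $q^*(Q/Q) = G/G$, so $q^*$ also preserves the unit. Compatibility with the structure morphisms of spans is routine and shows $q^*$ is strong symmetric monoidal.

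Next, I would check unit preservation. The unit of $\boxtimes$ in $\Mack_Q$ is the Burnside Mackey functor $\underline{A}_Q$, which is the (group completion of the) representable at $Q/Q$. Since left Kan extension along $q^*$ sends the representable at $X$ to the representable at $q^*(X)$, we obtain $\qInd_Q^G(\underline{A}_Q) \cong \underline{A}_G$.

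For compatibility with the box product, I would argue that both $\qInd_Q^G(- \boxtimes -)$ and $\qInd_Q^G(-) \boxtimes \qInd_Q^G(-)$ are cocontinuous in each variable: the first because $\qInd_Q^G$ is a left adjoint and $\boxtimes$ preserves colimits in each variable (again by being a left Kan extension), the second for the same reason. Since every Mackey functor is a colimit of representables, it suffices to exhibit a natural isomorphism on representables $\underline{A}_Q^{Q/K}$ and $\underline{A}_Q^{Q/L}$. Both sides then compute the representable at $q^*(Q/K \times Q/L) \cong q^*(Q/K) \times q^*(Q/L)$, where the isomorphism uses strong monoidality of $q^*$ from the first step. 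The natural isomorphism obtained this way on representables then extends canonically to all Mackey functors, and the coherence hexagons and triangles are inherited from those of $q^*$.

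The main conceptual step is really the abstract Day convolution principle, namely that left Kan extension along a strong symmetric monoidal functor is strong symmetric monoidal with respect to Day convolution; the technical obstacle is merely to confirm the hypotheses apply in the slightly modified Mackey-functor setting (additive presheaves on $\B_Q$ rather than arbitrary presheaves), which is handled by noting that the group completion involved in forming $\B_G$ interacts well with Kan extensions along additive functors.
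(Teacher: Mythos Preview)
Your proposal is correct and follows essentially the same approach as the paper: both arguments rest on the observation that $q^*$ preserves cartesian products and then invoke the Day convolution principle that left Kan extension along a strong symmetric monoidal functor is strong symmetric monoidal. The paper states this tersely as ``commuting left Kan extensions,'' while you unpack the argument by reducing to representables via cocontinuity; your treatment of the additive/Mackey-functor subtlety at the end corresponds to the paper's opening remark that $\Lan_{q^*}$ of a Mackey functor is again product-preserving.
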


\begin{proof}
Since $q^*$ is a product preserving functor, the left Kan extension of a Mackey functor along $q^*$ is an product preserving functor $\B_G^{op} \rightarrow \Ab$, hence a Mackey functor. Left Kan extending along $q^*$ is a symmetric monoidal functor for the box product as the box product is the Day convolution product of the Cartesian product with the tensor product of abelian groups and $q^*$ is product preserving. That is, $\qInd_Q^G$ is strong symmetric monoidal via commuting left Kan extensions.
\end{proof}

\begin{prop}\label{qIndResAdj}
There is an adjunction 
\[
\adjunction{$\Mack_Q$}{$\Mack_G$}{$\qInd_Q^G$}{$\qRes_Q^G$}
\]
\end{prop}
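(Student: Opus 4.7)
The plan is to realize this as an instance of the standard adjunction between left Kan extension and precomposition along a functor. By construction, $\qInd_Q^G = \Lan_{q^*}$ and $\qRes_Q^G$ is precomposition with $q^*\colon \B_Q^{op}\to\B_G^{op}$, so the universal property of the pointwise left Kan extension, which exists since $\B_Q^{op}$ is small and $\Ab$ is cocomplete, immediately furnishes a natural isomorphism
\[
\Hom_{[\B_G^{op},\Ab]}(\Lan_{q^*}\underline{M},\underline{N})\;\cong\;\Hom_{[\B_Q^{op},\Ab]}(\underline{M},\underline{N}\circ q^*)
\]
for all functors $\underline{M}\in[\B_Q^{op},\Ab]$ and $\underline{N}\in[\B_G^{op},\Ab]$.

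Next I would verify that this adjunction restricts to the full subcategories $\Mack_Q \subseteq [\B_Q^{op},\Ab]$ and $\Mack_G \subseteq [\B_G^{op},\Ab]$ cut out by the additivity condition. For the right adjoint, $\underline{N}\circ q^*$ is additive whenever $\underline{N}$ is, because $q^*$ preserves finite products: a disjoint union of $Q$-sets, viewed through $q$, is the corresponding disjoint union of $G$-sets. For the left adjoint, the preceding proposition already confirms that $\qInd_Q^G\underline{M}=\Lan_{q^*}\underline{M}$ is additive. Because $\Mack_Q$ and $\Mack_G$ are full subcategories, the hom-set isomorphism above restricts directly to the desired natural bijection
\[
\Hom_{\Mack_G}(\qInd_Q^G\underline{M},\underline{N})\;\cong\;\Hom_{\Mack_Q}(\underline{M},\qRes_Q^G\underline{N}),
\]
with naturality in both variables inherited from the Kan extension adjunction.

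There is no serious obstacle here: the content of the proposition is built into the definitions, with the preceding proposition supplying the only nontrivial input, namely that $\qInd_Q^G$ lands in $\Mack_G$. The heart of the argument is a one-line appeal to the universal property of left Kan extensions, and the check that both functors preserve the additivity constraint is automatic from $q^*$ being product-preserving.
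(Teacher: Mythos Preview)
Your proposal is correct and follows essentially the same approach as the paper: both invoke the universal property of left Kan extension to obtain the adjunction between $\Lan_{q^*}$ and precomposition with $q^*$. The paper's proof is a single sentence to this effect, while you additionally spell out why the adjunction restricts from all functors to additive ones; this extra verification is sound and indeed relies on the preceding proposition exactly as you indicate.
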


\begin{proof}
This follows from defining $\qInd_Q^G$ as the left Kan extension along $q^*$, while $\qRes_Q^G$ is defined as precomposition with $q^*$. 
\end{proof}

We also consider the \textit{inflation of $Q$-Mackey functors along the quotient map $q$}. This is a construction that produces $G$-Mackey functors and is closely related to the induction along a quotient map via a left inverse of the functor $q^*$. 

\begin{definition}
Let $\Fix_N\colon \B_G^{op} \rightarrow \B_Q^{op}$ denote the functor given on $G$-sets and spans of $G$-sets by taking $N$-fixed points. 
\end{definition} 

\begin{proposition}
The functor $\Fix_N$ gives a left inverse of $q^*$. 
\end{proposition}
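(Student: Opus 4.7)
The plan is to show that for every $Q$-set $X$, the $G$-set $q^*(X)$ has the property that $N$ acts trivially, so its $N$-fixed points recover $X$ as a $Q$-set, and to verify that this identification is natural on spans so as to produce an identity (up to canonical natural isomorphism) on $\B_Q^{op}$.

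First I would unpack the definitions on objects. Given a $Q$-set $X$, the functor $q^*$ endows $X$ with the $G$-action $g \cdot x = q(g) \cdot x$. For any $n \in N$, we have $q(n) = e_Q$, so $n \cdot x = x$ for all $x \in X$. Consequently $\Fix_N(q^*(X)) = X$ as an underlying set, and the residual $G/N = Q$-action on the $N$-fixed points is precisely the original $Q$-action on $X$. This gives a canonical identification $\Fix_N(q^*(X)) = X$ of $Q$-sets.

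Next I would promote this to a natural isomorphism on morphisms. A morphism in $\B_Q^{op}$ (equivalently, in $\B_Q$) is an element of the group completion of the monoid of isomorphism classes of spans of $Q$-sets, so by additivity it suffices to check the identification on a single span $X \xleftarrow{f} Z \xrightarrow{g} Y$ of finite $Q$-sets. Applying $q^*$ view the span as a span of $G$-sets; since $N$ acts trivially on each of $q^*(X), q^*(Z), q^*(Y)$, taking $N$-fixed points returns the original sets with their original structure maps, and the original $Q$-set structure is recovered. Thus $\Fix_N \circ q^*$ is the identity functor on objects and morphisms (or at worst naturally isomorphic to the identity via the canonical set-level identifications).

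The only mildly technical point is bookkeeping of the Weyl/residual group actions on fixed points to ensure that the $Q$-action on $\Fix_N(q^*(X))$ is literally the action we started with, rather than being twisted by some canonical but nontrivial isomorphism $G/N \cong Q$. The main obstacle, if any, is therefore purely notational: writing down the natural isomorphism $\Fix_N \circ q^* \xrightarrow{\cong} \id_{\B_Q^{op}}$ and checking its compatibility with composition of spans via the pullback description of composition in the Burnside category. Once that is in place, the conclusion follows immediately.
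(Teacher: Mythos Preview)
Your proposal is correct and follows the same approach as the paper: the key observation is that $N$ acts trivially on $q^*(X)$, so taking $N$-fixed points returns $X$ with its original $Q$-action. The paper's proof is the one-line version of exactly this argument, and your additional bookkeeping on spans and naturality is a reasonable elaboration but not strictly necessary for the statement as written.
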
 

\begin{proof}
The $N$-fixed points of a $Q$-set $T$ viewed as a $G$-set by pulling back along the quotient map are given by $T$. 
\end{proof}

\begin{definition}
The \textit{inflation of Mackey functors} from $Q$ to $G$ is the functor $\Inf_Q^G\colon \Mack_Q \rightarrow \Mack_G$ given by precomposition with $\Fix_N$.  
\end{definition}

Again, we describe the inflation functor as identifying the subgroups of $\sfrac{G}{N}$ with those of $G$ containing $N$ and extending $\underline{M}$ to a $G$-Mackey functors by defining $\Inf_Q^G \underline{M}(\sfrac{G}{H}) = 0$ for all subgroups $H \ngeq N$. We find a similar visualization of this functor useful: 

\begin{heuristic}
The inflation of a Mackey functor $\underline{M} \in \Mack_Q$ along the quotient $q$ can be viewed as follows:  
\[
\begin{tikzpicture}
\node at (-1.6,0) {$\underline{M} =$};

\draw[-] (6,2) to (4,0);
\draw[-] (4,0) to (6,-2);
\draw[-] (6,-2) to (8,0);
\draw[-] (8,0) to (6,2);

\draw[fill=blue!25] (4,0) -- (6.3,-0.3) -- (6,2) -- cycle;
\draw[fill=red!25] (6,2) -- (6.3,-0.3) -- (4,0) -- (6,-2) -- (8,0) -- cycle;

\node at (6.3,-0.3) {$\bullet$}; 
\node[below right] at (6.3,-0.3) {$N$};

\node at (6,2) {$\bullet$}; 
\node[above] at (6,2) {$G$};

\draw[-] (4,0) to (6.3,-0.3);
\draw[-] (6,2) to (6.3,-0.3);

\draw[|->] (2.25,0) to (3.75,0);

\draw[fill=blue!25] (-1,0) -- (1.3,-0.3) -- (1,2) -- cycle;

\node at (1.3,-0.3) {$\bullet$}; 
\node[below right] at (1.3,-0.3) {$\sfrac{N}{N}$};

\node at (1,2) {$\bullet$}; 
\node[above] at (1,2) {$\sfrac{G}{N}$};

\node at (8.9,0) {$= \Inf_Q^G \underline{M}$};
\end{tikzpicture} 
\]
where the red (lower right) region of $\Inf_Q^G \underline{M}$ is interpreted as being zero. Again, we suppress the Weyl actions in this picture.
\end{heuristic}

\begin{definition}
The \textit{$N$-geometric fixed points} functor, denoted $\Phi^N$, is given by 
\[ \Phi^N (-) \coloneqq \Lan_{\Fix_N} (-)\colon \Mack_G \rightarrow \Mack_Q. \]
This functor is denoted by $(-)^+$ in \cite[Section 2]{ThevenazWebb}. 
\end{definition}

\begin{prop}
There is an adjunction 
\[
\adjunction{$\Mack_G$}{$\Mack_Q$}{$\Phi^N$}{$\Inf_Q^G$}
\]
\end{prop}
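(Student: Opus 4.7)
The plan is to mirror the proof of \cref{qIndResAdj} almost verbatim, since the situation here is formally identical: $\Inf_Q^G$ is defined as precomposition with the functor $\Fix_N\colon \B_G^{op} \to \B_Q^{op}$, and $\Phi^N$ is defined as left Kan extension along $\Fix_N$. The desired adjunction is then an instance of the universal property of left Kan extension, which produces a natural bijection
\[ \Hom_{\Mack_Q}(\Phi^N \underline{M}, \underline{N}) = \Hom_{[\B_Q^{op},\Ab]}(\Lan_{\Fix_N} \underline{M}, \underline{N}) \cong \Hom_{[\B_G^{op},\Ab]}(\underline{M}, \underline{N} \circ \Fix_N) = \Hom_{\Mack_G}(\underline{M}, \Inf_Q^G \underline{N}) \]
for all $\underline{M} \in \Mack_G$ and $\underline{N} \in \Mack_Q$.

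The only point that requires any genuine verification is that $\Phi^N \underline{M}$ really lands in $\Mack_Q$, i.e. that the left Kan extension of an additive (product-preserving) functor along $\Fix_N$ remains additive. This is the same thing one has to check for $\qInd_Q^G$, and it reduces to the observation that $\Fix_N$ preserves finite products of $G$-sets (indeed, $(X \times Y)^N \cong X^N \times Y^N$), so that the Day-convolution style formula for $\Lan_{\Fix_N} \underline{M}$ interacts correctly with the Cartesian monoidal structure on both Burnside categories. The upshot is that $\Phi^N$ factors through $\Mack_Q \subseteq [\B_Q^{op}, \Ab]$, and the Kan-extension adjunction restricts to an adjunction between categories of Mackey functors.

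There is no serious obstacle: the entire content is the formal adjunction $\Lan_{\Fix_N} \dashv \Fix_N^*$ between functor categories, together with the product-preservation check for $\Fix_N$. The mild sanity check worth noting explicitly, if the reader wants geometric intuition, is that $\Inf_Q^G \underline{N}$ evaluated on $G/H$ gives $\underline{N}((G/H)^N)$, which is $\underline{N}(Q/(H/N))$ when $N \leq H$ and is zero (empty fixed point set, hence trivial group under the additive extension) when $N \not\leq H$; this matches the heuristic picture of inflation given above and confirms that $\Inf_Q^G$ has the expected ``extension by zero'' behavior.
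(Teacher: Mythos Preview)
Your proposal is correct and takes essentially the same approach as the paper: the paper's proof is the single sentence ``Again, $\Phi^N$ is defined by left Kan extending along $\Fix_N$,'' and your argument simply unpacks this formal adjunction $\Lan_{\Fix_N} \dashv \Fix_N^*$ with the additional product-preservation check. Your added verification that $\Phi^N$ lands in $\Mack_Q$ and the sanity check on the extension-by-zero behavior of inflation are helpful elaborations, but the core idea is identical.
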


\begin{proof}
Again, $\Phi^N$ is defined by left Kan extending along $\Fix_N$. 
\end{proof}

\begin{prop}
The $N$-geometric fixed points functor is strong symmetric monoidal. 
\end{prop}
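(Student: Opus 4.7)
My plan is to imitate, nearly verbatim, the argument already given in the paper for why $\qInd_Q^G$ is strong symmetric monoidal, since $\Phi^N$ is defined in an exactly parallel manner as the left Kan extension along $\Fix_N\colon \B_G^{op} \to \B_Q^{op}$. The entire content will reduce to checking that $\Fix_N$ is itself a strong symmetric monoidal functor for the Cartesian product of $G$-sets (and $Q$-sets), and then invoking the fact that left Kan extension along a strong monoidal functor is automatically strong monoidal for the resulting Day convolution product.

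First, I would verify that $\Fix_N$ is a well-defined functor on Burnside categories. Finite $G$-sets go to finite $Q$-sets by taking $N$-fixed points, and a span $X \leftarrow Z \rightarrow Y$ of $G$-sets goes to $X^N \leftarrow Z^N \rightarrow Y^N$. This assignment respects composition of spans because the composition requires a pullback and $N$-fixed points, being a finite limit construction ($(-)^N \cong \Hom_G(G/N,-)$), commutes with pullbacks of $G$-sets. Next, I would observe that $\Fix_N$ preserves the relevant monoidal and additive structure: $(X \sqcup Y)^N \cong X^N \sqcup Y^N$ and $(X \times Y)^N \cong X^N \times Y^N$ canonically, so $\Fix_N$ is both additive (biproduct-preserving) and strong symmetric monoidal for the Cartesian product.

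With this in hand, the argument mirrors the one for $\qInd_Q^G$. Additivity of $\Fix_N$ guarantees that $\Phi^N \underline{M} = \Lan_{\Fix_N} \underline{M}$ is additive as a functor $\B_G^{op} \to \Ab$, hence lands in $\Mack_Q$ as required. Strong symmetric monoidality then follows from the general Day convolution principle: the box product on $\Mack_G$ (resp.\ $\Mack_Q$) is the Day convolution of the Cartesian product on $\B_G^{op}$ (resp.\ $\B_Q^{op}$) against the tensor product on $\Ab$, and left Kan extension along a strong symmetric monoidal functor between such domains is strong symmetric monoidal for the convolution products. This is precisely the "commuting left Kan extensions" slogan already invoked in the paper for $\qInd_Q^G$.

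The only mild obstacle is bookkeeping around $\Fix_N$ acting on spans rather than on $G$-sets, in particular ensuring that the pullback-preservation is invoked correctly so that $\Fix_N$ is genuinely a functor of Burnside categories. Everything else is formal once the product-preservation of $(-)^N$ is in place, and I would not expect any deeper subtlety than that.
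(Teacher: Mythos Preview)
Your proposal is correct and follows essentially the same approach as the paper: the paper's proof simply asserts that left Kan extension along a product- and coproduct-preserving functor between Burnside categories is strong symmetric monoidal on Mackey functors, and reduces to checking that $\Fix_N$ has these properties. You have supplied more of the verification (pullback-preservation for well-definedness on spans, explicit product and coproduct preservation, the Day convolution slogan), but the argument is the same one the paper intends.
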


\begin{proof}
The left Kan extension along a product and coproduct preserving functor between Burnside categories gives a strong symmetric monoidal functor on Mackey functors. It suffices to check that $\Fix_N$ is such a functor. 
\end{proof}

While the $N$-geometric fixed points functor is defined as a left Kan extension, there is a nice formula for the levelwise values of $\Phi^N \underline{M}$: 

\begin{remark}\label{GeomFixPtsFormula}
Let $\underline{M} \in \Mack_G$. The $N$-geometric fixed points Mackey functor $\Phi^N \underline{M}$ is a $\sfrac{G}{N}$-Mackey functor with
\[ (\Phi^N \underline{M})(\sfrac{\sfrac{G}{N}}{\sfrac{H}{N}}) \cong \underline{M}(\sfrac{G}{H}) \left/ \left\langle \bigcup\limits_{\substack{J \leq H \\ J \ngeq N}} \im(\tr_J^H) \right\rangle \right. . \]
This can be found in the discussion preceding \cite[Proposition 2.3]{ThevenazWebb}.
\end{remark}

Again, we find the following visualization useful: 

\begin{heuristic}
The $N$-geometric fixed points of a Mackey functor $\underline{M} \in \Mack_G$ can be visualized as follows:
\[
\begin{tikzpicture}
\node at (-2.6,0) {$\underline{M} = $};

\draw[-] (0,2) to (-2,0);
\draw[-] (-2,0) to (0,-2);
\draw[-] (0,-2) to (2,0);
\draw[-] (2,0) to (0,2);

\draw[fill=yellow!25] (-2,0) -- (0.3,-0.3) -- (0,2) -- cycle;
\draw[fill=blue!25] (0,2) -- (0.3,-0.3) -- (-2,0) -- (0,-2) -- (2,0) -- cycle;

\node at (0.3,-0.3) {$\bullet$}; 
\node[below right] at (0.3,-0.3) {$N$};

\node at (0,2) {$\bullet$}; 
\node[above] at (0,2) {$G$};

\draw[-] (-2,0) to (0.3,-0.3);
\draw[-] (0,2) to (0.3,-0.3);

\node at (0.25,-1) {$\bullet$};
\node at (-0.5,-1) {$\bullet$};
\node at (1,-0.25) {$\bullet$};
\node at (1,0.5) {$\bullet$};

\draw[->, bend left, thick] (0.25,-1) to ( -0.5,0.25);
\draw[->, bend left, thick] (-0.5,-1) to ( -1,0.5);

\draw[->, bend right, thick] (1,-0.25) to (-0.25,0.5);
\draw[->, bend right, thick] (1,0.5) to (-0.5,1);

\draw[|->] (2.25,0) to (3.75,0);

\draw[fill=green!25] (4,0) -- (6.3,-0.3) -- (6,2) -- cycle;

\node at (6.3,-0.3) {$\bullet$}; 
\node[below right] at (6.3,-0.3) {$\sfrac{N}{N}$};

\node at (6,2) {$\bullet$}; 
\node[above] at (6,2) {$\sfrac{G}{N}$};

\node at (7.6,0) {$= \Phi^N \underline{M}$};
\end{tikzpicture}, 
\]
where the arrows in the diagram on the left represent transfers from subgroups $J \ngeq N$ to subgroups $H \geq N$. The levelwise values in the right diagram represent corresponding values in the yellow (top left) region of the left diagram modulo the image of depicted transfers. We similarly suppress the Weyl actions in this picture.
\end{heuristic}

With this description of geometric fixed points, we immediately deduce the following: 

\begin{prop}
The functor $\Phi^N$ is left inverse to $\Inf_Q^G$.  
\end{prop}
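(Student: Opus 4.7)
The plan is to combine the levelwise formula for geometric fixed points in \cref{GeomFixPtsFormula} with the levelwise description of $\Inf_Q^G$ given by precomposition with $\Fix_N$. This should yield a direct isomorphism $\Phi^N \Inf_Q^G \underline{M} \cong \underline{M}$ for each $\underline{M} \in \Mack_Q$, and the naturality in $\underline{M}$ is built in since every step is functorial.

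First, I would fix $\underline{M} \in \Mack_Q$ and a subgroup $\sfrac{H}{N} \leq \sfrac{G}{N}$, corresponding to a subgroup $N \leq H \leq G$. Applying \cref{GeomFixPtsFormula} to $\Inf_Q^G \underline{M} \in \Mack_G$, I get
\[
(\Phi^N \Inf_Q^G \underline{M})(\sfrac{\sfrac{G}{N}}{\sfrac{H}{N}}) \cong (\Inf_Q^G \underline{M})(\sfrac{G}{H}) \Big/ \Big\langle \bigcup_{\substack{J \leq H \\ J \ngeq N}} \im(\tr_J^H) \Big\rangle.
\]
Next, I would unpack each piece using $\Inf_Q^G = (-) \circ \Fix_N$. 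Since $H \geq N$, we have $\Fix_N(\sfrac{G}{H}) = \sfrac{\sfrac{G}{N}}{\sfrac{H}{N}}$, so the numerator becomes $\underline{M}(\sfrac{\sfrac{G}{N}}{\sfrac{H}{N}})$. For each $J \leq H$ with $J \ngeq N$, the fixed-point set $\Fix_N(\sfrac{G}{J})$ is empty because $N$ cannot act trivially on $\sfrac{G}{J}$, so by additivity of Mackey functors $(\Inf_Q^G \underline{M})(\sfrac{G}{J}) = \underline{M}(\emptyset) = 0$, and hence each such $\tr_J^H$ is the zero map. The subgroup being quotiented out is therefore trivial, giving a levelwise isomorphism onto $\underline{M}(\sfrac{\sfrac{G}{N}}{\sfrac{H}{N}})$.

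Finally, I would check that these levelwise isomorphisms assemble into a natural isomorphism of Mackey functors. For $N \leq K \leq H \leq G$, the restriction, transfer, and conjugation maps in $\Phi^N \Inf_Q^G \underline{M}$ are induced from the corresponding structure maps in $\Inf_Q^G \underline{M}$, which in turn are obtained by applying $\Fix_N$ to the defining spans in $\B_G$. Since $\Fix_N$ is left inverse to $q^*$ on spans between $G$-sets of the form $\sfrac{G}{H}$ with $H \geq N$, these structure maps coincide with those of $\underline{M}$ under the identification above. Naturality in $\underline{M}$ is automatic from functoriality of $\Phi^N$ and $\Inf_Q^G$.

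The only place where a small subtlety enters is confirming that the transfer contributions in the geometric-fixed-points quotient really vanish; this reduces cleanly to the input $\Fix_N(\sfrac{G}{J}) = \emptyset$ for $J \ngeq N$ together with the additivity axiom for Mackey functors. Everything else is a bookkeeping check against the two heuristic pictures preceding the statement.
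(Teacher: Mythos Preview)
Your proposal is correct and follows essentially the same approach as the paper: the paper simply asserts that the result is immediate from the levelwise description of $\Phi^N$ in \cref{GeomFixPtsFormula}, and you have carefully unpacked exactly that argument. Your explicit observation that $\Fix_N(\sfrac{G}{J}) = \emptyset$ for $J \ngeq N$ (using normality of $N$) is the key point the paper leaves implicit.
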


Formally, we can immediately deduce another right inverse of geometric fixed points: 

\begin{prop}\label{PhiLeftInvToqInd}
The functor $\Phi^N$ is left inverse to $\qInd_Q^G$. Therefore, $\qInd_Q^G$ gives a strong symmetric monoidal embedding of $\Mack_Q$ into $\Mack_G$.
\end{prop}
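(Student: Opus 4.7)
The plan is to combine the preceding proposition (that $\Fix_N$ is left inverse to $q^*$ at the level of Burnside categories) with the standard compositionality of left Kan extensions. Concretely, from the preceding proposition we have the equality $\Fix_N \circ q^* = \id_{\B_Q^{op}}$ of additive functors between Burnside categories. Since both $q^*$ and $\Fix_N$ are functors between essentially small $\Ab$-enriched categories and $\Ab$ is cocomplete, the relevant pointwise left Kan extensions exist and satisfy the usual composition law $\Lan_G \Lan_F \cong \Lan_{G \circ F}$.

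The computation then reads
\[
\Phi^N \circ \qInd_Q^G \;=\; \Lan_{\Fix_N} \circ \Lan_{q^*} \;\cong\; \Lan_{\Fix_N \circ q^*} \;=\; \Lan_{\id_{\B_Q^{op}}} \;=\; \id_{\Mack_Q},
\]
which is precisely the claim that $\Phi^N$ is left inverse to $\qInd_Q^G$. This is a formal analog of the statement for $\Inf_Q^G$ proved just above, with $\Inf_Q^G$ replaced by its own left adjoint partner $\qInd_Q^G$, and the same left inverse $\Phi^N$ doing the work.

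For the second sentence, the previous proposition already establishes that $\qInd_Q^G$ is strong symmetric monoidal, so it remains only to deduce that it is an embedding. Since $\qInd_Q^G$ admits a retraction via $\Phi^N$, any parallel pair of morphisms $f,g\colon \underline{M} \to \underline{N}$ in $\Mack_Q$ with $\qInd_Q^G f = \qInd_Q^G g$ becomes equal after applying $\Phi^N$, and by naturality of the isomorphism $\Phi^N \qInd_Q^G \cong \id$ we recover $f = g$, so $\qInd_Q^G$ is faithful; fullness follows from the same naturality argument applied to hom-sets, since the composite $\Hom(\underline{M},\underline{N}) \to \Hom(\qInd_Q^G \underline{M}, \qInd_Q^G \underline{N}) \to \Hom(\Phi^N \qInd_Q^G \underline{M}, \Phi^N \qInd_Q^G \underline{N})$ is the identity up to the natural isomorphism. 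Combined with strong symmetric monoidality, this realizes $\Mack_Q$ as a full symmetric monoidal subcategory of $\Mack_G$ via $\qInd_Q^G$.

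The only delicate point is verifying the composition law for left Kan extensions in the additive setting, but this is routine since additivity is preserved under left Kan extensions along additive functors, so no real obstacle arises; the argument is essentially a bookkeeping exercise organizing the adjoint picture already set up in the excerpt.
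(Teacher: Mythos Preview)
Your core argument is exactly the paper's: both compute $\Phi^N \circ \qInd_Q^G = \Lan_{\Fix_N} \circ \Lan_{q^*} \cong \Lan_{\Fix_N \circ q^*} = \Lan_{\id} = \id$ using compositionality of left Kan extensions together with the earlier fact that $\Fix_N \circ q^* = \id$. The paper's proof is a single displayed line carrying out precisely this chain.

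One caution on your elaboration of the second sentence: the argument you give for \emph{fullness} does not actually establish it. Knowing that the composite
\[
\Hom(\underline{M},\underline{N}) \to \Hom(\qInd_Q^G \underline{M}, \qInd_Q^G \underline{N}) \to \Hom(\Phi^N \qInd_Q^G \underline{M}, \Phi^N \qInd_Q^G \underline{N}) \cong \Hom(\underline{M},\underline{N})
\]
is the identity only shows the first map is a split injection, i.e.\ $\qInd_Q^G$ is faithful; it says nothing about surjectivity onto $\Hom(\qInd_Q^G \underline{M}, \qInd_Q^G \underline{N})$. (A one-object category mapping to a category with nontrivial endomorphisms already gives a counterexample to ``retraction implies full.'') The paper does not spell out what it means by ``embedding'' here and does not argue fullness; for everything used downstream (notably \cref{PicSplitting}), faithfulness together with the strong symmetric monoidal retraction $\Phi^N$ is all that is required. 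If you want to keep the stronger claim, you would need a separate argument, for instance showing the unit $\id \Rightarrow \qRes_Q^G \circ \qInd_Q^G$ of the adjunction is an isomorphism, or that $q^*$ is fully faithful on Burnside categories.
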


\begin{proof}
We observe that 
\[ (\Phi^N \circ \qInd_Q^G)(\underline{M}) = \Lan_{\Fix_N}(\Lan_{q^*} \underline{M}) = \Lan_{\Fix_N \circ q^*} \underline{M} = \Lan_{\id} \underline{M} = \underline{M}. \]
\end{proof}

\begin{remark}
To demonstrate the difference between $\qInd_Q^G$ and $\Inf_Q^G$, we note that $\Inf_{\sfrac{G}{G}}^G \Z$ is a Mackey functor which vanishes everywhere except for $\sfrac{G}{G}$ while $\qInd_{\sfrac{G}{G}}^G \Z$ is the Burnside Mackey functor. 
\end{remark}

\cref{PhiLeftInvToqInd} and \cref{PicFunctorial} yield a description of $\Pic(\Mack_G)$ in terms of $\Pic(\Mack_Q)$ for any quotient $Q$ of $G$. 

\begin{corollary}
\label{PicSplitting}
Let $N \leq G$ be a normal subgroup of $G$ and $Q \coloneqq \sfrac{G}{N}$. There is a split exact sequence
\[ 0 \rightarrow \Pic(\Mack_Q) \xrightarrow{\qInd_Q^G} \Pic(\Mack_G) \rightarrow \coker(\qInd_Q^G) \rightarrow 0, \]
where the inclusion $\Pic(\Mack_Q) \xrightarrow{\qInd_Q^G} \Pic(\Mack_G)$ has a retraction given by $\Phi^N$. In particular, we get a splitting 
\[ \Pic(\Mack_G) \cong \Pic(\Mack_Q) \oplus \ker(\Phi^N). \]
\end{corollary}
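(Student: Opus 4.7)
The proof is essentially formal, built on two ingredients already in hand: the strong symmetric monoidality of both $\qInd_Q^G$ and $\Phi^N$, together with the identity $\Phi^N \circ \qInd_Q^G = \id_{\Mack_Q}$ from \cref{PhiLeftInvToqInd}. The plan is to lift this splitting from the categorical level to the Picard groups and then invoke the standard splitting lemma for abelian groups.

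First, since $\qInd_Q^G$ and $\Phi^N$ are both strong symmetric monoidal, \cref{PicFunctorial} provides induced homomorphisms
\[
\qInd_Q^G\colon \Pic(\Mack_Q) \to \Pic(\Mack_G) \qquad \text{and} \qquad \Phi^N\colon \Pic(\Mack_G) \to \Pic(\Mack_Q).
\]
Next, I would observe that functoriality of $\Pic$ on strong symmetric monoidal functors turns the equality $\Phi^N \circ \qInd_Q^G = \id_{\Mack_Q}$ from \cref{PhiLeftInvToqInd} into the identity $\Phi^N \circ \qInd_Q^G = \id_{\Pic(\Mack_Q)}$. This immediately implies that $\qInd_Q^G\colon \Pic(\Mack_Q) \to \Pic(\Mack_G)$ is injective and that $\Phi^N$ serves as a retraction.

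From this, the short exact sequence
\[
0 \to \Pic(\Mack_Q) \xrightarrow{\qInd_Q^G} \Pic(\Mack_G) \to \coker(\qInd_Q^G) \to 0
\]
is well-defined, and the retraction $\Phi^N$ ensures it splits in the category of abelian groups. Applying the splitting lemma for abelian groups to the retraction $\Phi^N$, we conclude
\[
\Pic(\Mack_G) \;\cong\; \im(\qInd_Q^G) \oplus \ker(\Phi^N) \;\cong\; \Pic(\Mack_Q) \oplus \ker(\Phi^N),
\]
where the second isomorphism uses that $\qInd_Q^G$ is injective on Picard groups.

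There is no real obstacle here: every nontrivial piece of the argument has already been established earlier in the section. The only subtlety worth flagging explicitly in the writeup is that the passage from the functor-level identity $\Phi^N \circ \qInd_Q^G = \id$ to the group-level identity requires the strong (not merely lax) symmetric monoidality of both functors, so that \cref{PicFunctorial} applies in both directions.
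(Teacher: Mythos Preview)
Your proposal is correct and matches the paper's intended argument exactly: the corollary is stated immediately after \cref{PhiLeftInvToqInd} with the preamble that it and \cref{PicFunctorial} together yield the result, and your writeup simply spells out that deduction. There is nothing to add.
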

We will leverage this splitting to prove the main results of \cref{Classification Section}.

\section{The Picard Group of $G$-Spectra}
\subsection{K{\"u}nneth Objects and Dualizable Objects}

Throughout this section, $(\C, \otimes, \mathbbm{1})$ will denote a closed symmetric monoidal category. We begin by recalling some finiteness conditions related to dualizability, then specialize to the (equivariant) algebraic setting of the category of modules over some sufficiently nice commutative monoid object. For more details concerning these notions, see \cite{FLM}. 

\begin{definition}
An object $X \in \C$ is \textit{dualizable} if there exists an object $Y \in \C$ and maps
\begin{enumerate}[(i)]
\item $\eta\colon \mathbbm{1} \rightarrow X \otimes Y$ 
\item $\epsilon\colon Y \otimes X \rightarrow \mathbbm{1}$
\end{enumerate}
called the \textit{unit} and \textit{counit}, respectively, which together satisfy the \textit{triangle identities:}
\begin{enumerate}[(a)]
\item $({\id_X} \otimes \epsilon) \circ (\eta \otimes {\id_X}) = \id_X$
\item $(\epsilon \otimes {\id_Y}) \circ ({\id_Y} \otimes \eta) = \id_Y$.
\end{enumerate}
Such an object $Y$ is called a \textit{dual of $X$}, and it is unique up to a unique isomorphism preserving units and counits.
\end{definition}

This definition is valid more generally in any symmetric monoidal category. However, in \textit{closed} symmetric monoidal categories, it is particularly well-behaved: if \(X\) is dualizable, then it has a canonical choice of dual given by \(DX \coloneqq [X, \mathbbm{1}]\), as well as a canonical counit given by the evaluation map \([X, \mathbbm{1}] \otimes X \to \mathbbm{1}\). In other words, \(X\) is dualizable if and only if there is a unit map \(\eta\colon \mathbbm{1} \to X \otimes DX\) which satisfies the triangle identities with the evaluation map.

\begin{definition}
A dualizable object $X$ in a symmetric monoidal category is called a \textit{K{\"u}nneth object} if it is a retract of a finite copower of the unit object. That is, $X$ is a retract of $\coprod_{i=1}^n \mathbbm{1}$ for some natural number $n$. 
\end{definition} 

In the category of modules over a commutative ring, K\"unneth is equivalent to dualizable, which is equivalent to finitely generated and projective. However, this is not the case in general. For example, the Mackey functors $\uparrow_H^G \underline{A} \in \Mack_G$ for $H \lneq G$ are dualizable (i.e., finitely generated and projective), but they are not K\"unneth objects. 

From here, we specialize to the context of additive symmetric monoidal categories. Moreover, we borrow notation from \cite{FLM}, letting $k\C \subseteq d\C$ denote the full subcategories of K{\"u}nneth objects and dualizable objects in $\C$, respectively. These are additive symmetric monoidal subcategories of $\C$ with strong symmetric monoidal inclusions $k\C \hookrightarrow d\C \hookrightarrow \C$. Consequently, we set $\Iso(k\C)$ and $\Iso(d\C)$ to be the semirings of isomorphism classes of K{\"u}nneth objects and dualizable objects, respectively, under $\oplus$ and $\otimes$. 

Lastly, we note that invertible objects are dualizable by \cite[Proposition 2.9]{May}, and an inverse of an invertible object serves as a canonical choice of dual. One can see, in general, that invertibility does not imply that an object is a K{\"u}nneth object. For example, $\Sigma^\infty S^1$ is an invertible spectrum which is not a retract of $\bigvee_{i=1}^n \sphere$ for any $n$. 

\subsection{An Algebraic Description of $\Pic(\Ho(\Sp^G))$}
Throughout, $\Ho(\Sp^G)$ will denote the $G$-equivariant stable homotopy category for $G$ a finite group. We regard $\Ho(\Sp^G)$ as a symmetric monoidal category equipped with its usual smash product and unit given by the sphere $G$-spectrum $\sphere_G \coloneqq \Sigma_G^\infty S^0$. In \cite{FLM}, Fausk, Lewis, and May give an exact sequence involving the Picard group of \(\Ho(\Sp^g)\). In fact, they prove the existence a more general embedding theorem for Picard groups:

\begin{theorem}\cite[Theorem 0.2]{FLM}
Let $(\C, \otimes, \1)$ be a stable homotopy category. There is an embedding $\Pic(\End(\1)) \hookrightarrow \Pic(\C)$, where $\End(\1)$ is the ring of endomorphisms of $\1$. The objects in the image of this embedding are the invertible objects which are retracts of finite coproducts of copies of $\1$ (i.e. invertible K\"unneth objects). 
\end{theorem}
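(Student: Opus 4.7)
\medskip

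\noindent \textbf{Proof proposal.} The plan is to construct the embedding explicitly via a lift-and-split procedure on idempotents. Write $R := \End_\C(\1)$. Given $P \in \Pic(R)$, invertibility forces $P$ to be finitely generated projective, so $P \cong \im(e)$ for some idempotent $e \in M_n(R)$. Under the identification $M_n(R) \cong \End_\C(\1^{\oplus n})$, the idempotent $e$ lifts to an idempotent endomorphism $\tilde e$ of $\1^{\oplus n}$ in $\C$. Since $\C$ is a stable homotopy category, idempotents split; let $\tilde P \in \C$ denote the resulting retract of $\1^{\oplus n}$. I propose to show that $P \mapsto \tilde P$ defines the desired embedding into $\Pic(\C)$.

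The first batch of verifications establishes the homomorphism structure: (i) well-definedness, which follows from uniqueness of idempotent splittings up to canonical isomorphism; (ii) compatibility with $\otimes$, where if $P \cong \im(e)$ with $e \in M_n(R)$ and $Q \cong \im(e')$ with $e' \in M_m(R)$, then $P \otimes_R Q \cong \im(e \otimes e')$ in $R^{\oplus nm}$, while $\tilde P \otimes \tilde Q$ realizes the splitting of $\tilde e \otimes \tilde e'$ in $\1^{\oplus nm}$, so uniqueness of splittings yields $\widetilde{P \otimes_R Q} \cong \tilde P \otimes \tilde Q$; (iii) the unit $R \in \Pic(R)$ maps to $\1$ via the trivial idempotent. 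In particular, $\tilde P \otimes \widetilde{P^{-1}} \cong \widetilde{R} \cong \1$ shows each $\tilde P$ is invertible, so we indeed land in $\Pic(\C)$, and the map is a group homomorphism.

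Injectivity and image description proceed via the lax symmetric monoidal functor $\pi_0 := \Hom_\C(\1, -)\colon \C \to \Mod_R$, which preserves finite coproducts and retracts. On K\"unneth objects, $\pi_0$ is in fact strong monoidal: by reduction to retracts and additivity in each variable, the lax structure map reduces to the trivial isomorphism $R \cong R \otimes_R R$. Since $\pi_0(\tilde P) \cong P$ by construction, this proves injectivity. Every $\tilde P$ is a retract of $\1^{\oplus n}$, hence an invertible K\"unneth object. Conversely, if $X$ is an invertible K\"unneth object presented by an idempotent $\tilde f \in \End_\C(\1^{\oplus n})$, then $f := \pi_0(\tilde f) \in M_n(R)$ is idempotent, $P := \im(f) \cong \pi_0(X)$ is invertible in $\Mod_R$ (by strong monoidality of $\pi_0$ on $k\C$ applied to $X$ and its inverse, which is also K\"unneth since the dual of a retract of $\1^{\oplus n}$ is a retract of $\1^{\oplus n}$), and $\tilde P \cong X$ by uniqueness of splittings.

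The main technical obstacle is the strong monoidality of $\pi_0$ on $k\C$, i.e., proving that the lax structure map $\pi_0(X) \otimes_R \pi_0(Y) \to \pi_0(X \otimes Y)$ is an isomorphism whenever $X, Y \in k\C$; this is essentially a K\"unneth formula in the axiomatic setting, justifying the name. A secondary subtlety is coherence: since splittings of idempotents in $\C$ are unique only up to isomorphism rather than on the nose, one must track these isomorphisms carefully to verify that the group-homomorphism axioms commute at the level of isomorphism classes, and that the construction does not depend on the particular presentation of $P$ as an image of an idempotent.
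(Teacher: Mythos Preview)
The paper does not supply its own proof of this statement: it is quoted verbatim from \cite[Theorem 0.2]{FLM} and used as input. That said, your lift-and-split-idempotents construction is exactly the argument of \cite{FLM}, and it is also the template the present paper follows in its own \cref{PicEmbeddings}, where the same idea (present an invertible module as the image of an idempotent on a finite free object, lift the idempotent to the homotopy category, and split there) is carried out in the $\Mack_G$-enriched setting. So your proposal is correct and matches both the cited source and the paper's subsequent adaptation; there is nothing further to add.
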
 

Specializing to $\C = \Ho(\Sp^G)$, they go on to extend this embedding to an exact sequence:

\begin{theorem}\cite[Theorem 0.1]{FLM} \label{FLMSplitting}
Let $G$ be a compact Lie group. There is an exact sequence 
\[ 0 \rightarrow \Pic(A(G)) \rightarrow \Pic(\Ho(\Sp^G)) \rightarrow C(G), \]
where $C(G)$ is the additive group of integer-valued, continuous functions on the space of closed subgroups of $G$. 
\end{theorem}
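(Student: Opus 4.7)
The plan is to build on the general embedding theorem stated just above by constructing a dimension function on $\Pic(\Ho(\Sp^G))$ out of geometric fixed points and then identifying its kernel with the image of $\Pic(A(G))$. The embedding $\Pic(A(G)) \hookrightarrow \Pic(\Ho(\Sp^G))$ is a direct application of the preceding theorem to $\C = \Ho(\Sp^G)$, using the classical identification $\End_{\Ho(\Sp^G)}(\sphere_G) \cong \pi_0^G(\sphere_G) \cong A(G)$ from Segal's Burnside ring theorem. That same general result characterizes the image as the isomorphism classes of invertible K\"unneth objects in $\Ho(\Sp^G)$.

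Next I would construct the map $\Pic(\Ho(\Sp^G)) \to C(G)$. For each closed subgroup $H \leq G$, the geometric fixed points functor $\Phi^H\colon \Ho(\Sp^G) \to \Ho(\Sp)$ is strong symmetric monoidal, hence preserves invertibility by \cref{PicFunctorial}. Since $\Pic(\Ho(\Sp)) \cong \Z$ via the degree of a suspension, each $\Phi^H X$ determines an integer $\deg \Phi^H X$. Setting $\Dim(X)(H) := \deg \Phi^H X$ gives a group homomorphism, and it remains to verify that $\Dim(X)$ genuinely lies in $C(G)$, i.e., is continuous as a function on the space of closed subgroups of $G$. This uses the locally constant behavior of $\Phi^H$ under deformation of $H$, a standard consequence of the isotropy separation description of geometric fixed points for compact Lie groups.

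Exactness at $\Pic(\Ho(\Sp^G))$ is the statement that $\ker(\Dim)$ equals the image of $\Pic(A(G))$. The easy inclusion $\im \subseteq \ker$ is short: an invertible K\"unneth object is a retract of some $\bigvee^n \sphere_G$, so applying $\Phi^H$ realizes it as a retract of $\bigvee^n \sphere$ in $\Ho(\Sp)$; the only such retract that is also invertible in $\Ho(\Sp)$ is $\sphere$ itself, forcing $\Dim$ to vanish on the image.

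The reverse containment is the main obstacle. Given an invertible $X$ with $\Dim(X) \equiv 0$, I would produce an equivalence from a finite wedge of copies of $\sphere_G$ onto $X$, exhibiting $X$ as a K\"unneth object. The strategy proceeds in two stages. First, invertibility of $X$ forces $\pi_0^G(X)$ to be an invertible $A(G)$-module, and this module is detected by an idempotent decomposition of $A(G)$. Second, this algebraic idempotent decomposition lifts to a decomposition of a finite wedge $\bigvee^n \sphere_G$ into wedge summands in $\Ho(\Sp^G)$, producing a candidate assembly map $\bigvee^n \sphere_G \to X$. The vanishing $\Dim(X) = 0$ then guarantees that this candidate induces an equivalence after every $\Phi^H$, and the Whitehead-style detection of equivalences by geometric fixed points for genuine $G$-spectra shows the candidate is itself an equivalence. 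The delicate step, where I would focus attention, is precisely this algebraic-to-homotopical lifting of idempotents to wedge-summand decompositions of $\bigvee^n \sphere_G$ in $\Ho(\Sp^G)$.
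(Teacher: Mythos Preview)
The paper does not supply its own proof of this statement: it is quoted verbatim as \cite[Theorem 0.1]{FLM} and used as input. So there is no in-paper argument to compare your proposal against.

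That said, your outline is consonant with how the paper later reworks these ideas in the $\Mack_G$-enriched setting (\cref{PicEmbeddings} and the surrounding discussion of $\Dim$): the embedding via the general K\"unneth theorem applied to $\End(\sphere_G) \cong A(G)$, the construction of $\Dim$ from the strong symmetric monoidal functors $\Phi^H$, and the identification of $\ker(\Dim)$ with invertible K\"unneth objects. Your ``easy inclusion'' argument is exactly the one the paper would endorse. For the reverse inclusion, the paper (following \cite{FLM}) does not build a map $\bigvee^n \sphere_G \to X$ and check it on all $\Phi^H$ as you propose; rather, it lifts the algebraic idempotent splitting $\pi_0^G X$ directly to a homotopical idempotent on $\bigvee^n \sphere_G$ (using $\pi_0^G(\bigvee^n \sphere_G) \cong \End(\bigvee^n \sphere_G)$), and the resulting retract is automatically equivalent to $X$ because $\upi_0$ is a semiring isomorphism on K\"unneth objects. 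Your alternative route via geometric-fixed-point detection would also work, but it is a slightly heavier hammer than what is actually needed once the idempotent lift is in hand.
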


In the case that $G$ is a finite group, it immediately follows that the Picard group of the equivariant stable homotopy category decomposes as a direct sum $\Pic(\Ho(\Sp^G)) \cong \Z^r \oplus \Pic(A(G))$ for some finite $r$. The quotient $\Pic(\Ho(\Sp^G))/\Pic(A(G))$ is identified with $\Z^r$, where $r$ is the number of conjugacy classes of cyclic subgroups of $G$. See the beginning of \cite[Section 3]{Angeltveit} for a discussion of this fact which follows from \cite[Theorem III.5.4]{tD}. 

At this point, the obstruction to understanding $\Pic(\Ho(\Sp^G))$ abstractly is to understand $\Pic(A(G))$, and tom Dieck-Petrie continue to provide. 

\begin{theorem}\cite[(3.33)]{GeomMods}\cite[Theorem 5]{tDPic}
For a finite group $G$, there is an isomorphism 
\[ \Pic(A(G)) \cong \prod\limits_{(H) \in \sfrac{\Sub(G)}{\text{conj}}} \left( \sfrac{\Z}{[G:H]} \right)^\times / \{ \pm 1 \}, \]
where $\sfrac{\Sub(G)}{conj}$ is the set of conjugacy classes of subgroups of $G$. 
\end{theorem}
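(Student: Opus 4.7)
The result is attributed to tom Dieck and Petrie and the plan is to describe the classical approach via the \emph{mark homomorphism} $\varphi\colon A(G) \hookrightarrow \tilde A(G) := \prod_{(H)} \Z$ sending a finite $G$-set $X$ to the tuple $(|X^H|)_{(H)}$ indexed over conjugacy classes of subgroups. This is an injective ring map onto a finite-index subring, and $\tilde A(G)$ is the integral closure of $A(G)$ inside its total ring of fractions. By a theorem of tom Dieck, the image of $\varphi$ is cut out by explicit congruences indexed by the subgroup lattice, and these same congruences permit a concrete levelwise description of the conductor ideal $\mathfrak{c}$ of $A(G)$ inside $\tilde A(G)$; the form one aims for is $\tilde A(G)/\mathfrak{c} \cong \prod_{(H)} \sfrac{\Z}{[G:H]}$.

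First I would apply the Milnor exact sequence associated to the conductor cartesian square
\[
\begin{tikzcd}
A(G) \arrow[r] \arrow[d] & \tilde A(G) \arrow[d] \\
A(G)/\mathfrak{c} \arrow[r] & \tilde A(G)/\mathfrak{c},
\end{tikzcd}
\]
which reads
\[ \tilde A(G)^\times \oplus (A(G)/\mathfrak{c})^\times \longrightarrow (\tilde A(G)/\mathfrak{c})^\times \longrightarrow \Pic(A(G)) \longrightarrow \Pic(\tilde A(G)) \oplus \Pic(A(G)/\mathfrak{c}). \]
Since $\tilde A(G) \cong \prod \Z$ has trivial Picard group (each factor is a PID), and since $A(G)/\mathfrak{c}$ is a finite, hence Artinian, commutative ring --- a finite product of local rings, each with trivial Picard --- the right-hand term vanishes and $\Pic(A(G))$ is the cokernel of the first map. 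Under the identifications $\tilde A(G)^\times \cong \prod_{(H)} \{\pm 1\}$ and $(\tilde A(G)/\mathfrak{c})^\times \cong \prod_{(H)} (\sfrac{\Z}{[G:H]})^\times$, the problem reduces to showing that the image of $(A(G)/\mathfrak{c})^\times$ is already contained in the image of $\tilde A(G)^\times$; the cokernel then becomes $\prod_{(H)} (\sfrac{\Z}{[G:H]})^\times$ modulo an independent $\pm 1$ in each factor, matching the stated formula.

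The main technical obstacle is twofold: first, pinning down that the conductor really takes the clean form $\mathfrak{c} \cong \prod_{(H)} [G:H]\cdot \Z$ --- this requires translating the tom Dieck congruences (which a priori involve orders of Weyl groups) into the indices appearing in the statement; second, verifying that every unit of $A(G)/\mathfrak{c}$ lifts through $A(G) \to \tilde A(G)$ to a tuple of global signs. The latter I would attack prime-locally, using tom Dieck's description of the primitive idempotents of $A(G)_{(p)}$ indexed by $p$-perfect subgroups to reduce to semi-local pieces where the congruences force any lifting unit to be $\pm 1$ in each coordinate; Dress's induction theorem provides a further reduction to cyclic subquotients if needed. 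Finally, to make the isomorphism explicit --- and to line it up with the main corollary above --- I would construct, for each class $\alpha \in \prod_{(H)} (\sfrac{\Z}{[G:H]})^\times / \{\pm 1\}$, the corresponding twisted invertible $A(G)$-module and verify this assignment inverts the cokernel computation. Self-contained treatments along these lines appear in \cite{tDPic} and \cite[(3.33)]{GeomMods}.
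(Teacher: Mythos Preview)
The paper does not give its own proof of this theorem: it is stated with citations to tom Dieck and Petrie and used as input. Later, in \cref{Classification Section}, the paper constructs explicit twisted Burnside modules $A(G)^a$ (for abelian $G$) and proves they inject into $\Pic(A(G))$, but surjectivity is deduced by a cardinality count that \emph{presupposes} the tom Dieck--Petrie formula (see the proof of \cref{ClassificationThm}). So there is no ``paper's own proof'' to compare against; your sketch is a reconstruction of the cited literature.

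As such a reconstruction, your outline is the right one: the mark homomorphism into the ghost ring $\tilde A(G) \cong \prod_{(H)} \Z$, the conductor square, and the Milnor units--Pic sequence, with vanishing of $\Pic(\tilde A(G))$ and $\Pic(A(G)/\mathfrak{c})$ reducing everything to a cokernel of unit groups. Two cautions on the technical obstacles you flag. First, the cokernel of the mark map is $\prod_{(H)} \Z/|W_G(H)|$, not $\prod_{(H)} \Z/[G:H]$, and the conductor is governed by Weyl group orders as well; the indices $[G:H]$ in the final answer arise only after one accounts for how the congruences at different subgroups interact, so the passage from $|W_G(H)|$ to $[G:H]$ is genuinely where the work lies and is not just a matter of ``translating'' one into the other. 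Second, the claim that every unit of $A(G)/\mathfrak{c}$ maps into the image of $\tilde A(G)^\times$ is false as stated for general $G$ --- what is true is that after quotienting by that image, the contribution of $(A(G)/\mathfrak{c})^\times$ becomes trivial in the cokernel, which is a weaker statement and is what the $p$-local idempotent analysis actually delivers. With those two points sharpened, your plan matches the arguments in \cite{tDPic} and \cite{GeomMods}.
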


We obtain an additive description of $\Pic(\Ho(\Sp^G))$ in the case that $G$ is a finite group: 

\begin{theorem}\cite{FLM, GeomMods, tD}
For a finite group $G$, the Picard group of the $G$-equivariant stable homotopy category decomposes as a sum 
\[ \Pic(\Ho(\Sp^G)) \cong \Z^r \oplus \prod\limits_{(H) \in \sfrac{\Sub(G)}{\text{conj}}} \left( \sfrac{\Z}{[G:H]} \right)^\times / \{ \pm 1 \}, \]
where $r$ is the number of conjugacy classes of cyclic subgroups of $G$. 
\end{theorem}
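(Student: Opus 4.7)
The plan is to assemble this theorem as a direct corollary of the three cited results, which are already collected in the excerpt. Since all the heavy lifting has been done elsewhere, my job is essentially bookkeeping: combine the split exact sequence of \cref{FLMSplitting} with the tom Dieck--Petrie computation of $\Pic(A(G))$, and identify the cokernel $C(G)$ in the finite setting.

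First I would invoke \cref{FLMSplitting} for our finite group $G$, viewed as a compact Lie group, to obtain the exact sequence
\[ 0 \to \Pic(A(G)) \to \Pic(\Ho(\Sp^G)) \to C(G). \]
Since $G$ is finite, the space of closed subgroups of $G$ is a finite discrete space, so $C(G)$ is just the free abelian group on the set of subgroups of $G$, and in particular is free abelian of finite rank. Consequently the sequence automatically splits, giving a (non-canonical) direct sum decomposition
\[ \Pic(\Ho(\Sp^G)) \cong \Pic(A(G)) \oplus \mathrm{im}\bigl(\Pic(\Ho(\Sp^G)) \to C(G)\bigr). \]

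Next I would identify the image of the right-hand map. As noted in the paragraph following \cref{FLMSplitting}, the image is free of rank $r$, where $r$ is the number of conjugacy classes of cyclic subgroups of $G$; this is the content of \cite[Theorem III.5.4]{tD} and is discussed in \cite[Section 3]{Angeltveit}. Hence the cokernel summand is $\Z^r$.

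Finally, I would substitute the tom Dieck--Petrie computation \cite[(3.33)]{GeomMods}, \cite[Theorem 5]{tDPic}, which identifies
\[ \Pic(A(G)) \cong \prod_{(H) \in \sfrac{\Sub(G)}{\text{conj}}} \bigl(\sfrac{\Z}{[G:H]}\bigr)^\times / \{\pm 1\}, \]
and combine with the previous paragraph to produce the stated decomposition. There is no real obstacle here: the only subtlety is confirming that the FLM exact sequence splits and that the cokernel rank $r$ really is counted by conjugacy classes of cyclic subgroups, both of which are standard consequences of the cited results. Thus the theorem follows immediately by concatenating the three ingredients.
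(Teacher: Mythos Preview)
Your proposal is correct and follows essentially the same route as the paper: the theorem is stated there without a separate proof, being presented as the concatenation of \cref{FLMSplitting}, the identification of the image in $C(G)$ as $\Z^r$ via \cite[Theorem III.5.4]{tD}, and the tom Dieck--Petrie computation of $\Pic(A(G))$. One tiny imprecision: $C(G)$ is the free abelian group on the set of \emph{conjugacy classes} of subgroups rather than on subgroups themselves, but this does not affect your argument.
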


\subsection{An Equivariant Algebraic Description of $\Pic(\Ho(\Sp^G))$}
\label{EquivAlgDescription}
The equivariant stable homotopy category is certainly an additive stable homotopy category. Consequently, the formalism of \cite{FLM} applies to $\Ho(\Sp^G)$ as we have seen. However, $\Ho(\Sp^G)$ can be equipped with a richer structure: a $\Mack_G$-enrichment. In order to recast the results of \cite{FLM} with a view towards this $\Mack_G$-enrichment, we first give a refined stratification of levels of dualizability in $G$-spectra. 

\begin{definition}
Let $X \in \Ho(\Sp^G)$ be a dualizable object. We say that $X$ is \textit{weakly K{\"u}nneth} if it is a retract of a finite wedge of induced spheres. That is, $X$ is a retract of $\bigvee_{i=1}^n (\sfrac{G}{H_i{}_+} \wedge \sphere_G)$ for some natural number \(n\). 
\end{definition}

It is immediately clear from the definition that K{\"u}nneth objects are weakly K{\"u}nneth objects: simply set $H_i = G$ for all $i$. We have the symmetric monoidal full subcategory $w\Ho(\Sp^G)$ of weakly K{\"u}nneth objects and strong symmetric monoidal inclusions 
\[ 
k\Ho(\Sp^G) \subseteq w\Ho(\Sp^G) \subseteq d\Ho(\Sp^G).
\]
Similarly, $\Iso(w\Ho(\Sp^G))$ denotes the semiring of weakly K{\"u}nneth objects under $\vee$ and $\wedge$. We begin our analysis with a result analogous to \cite[Proposition 1.2]{FLM}: 

\begin{proposition}\label{PropOfKunneth}
A dualizable object $X \in \Ho(\Sp^G)$ is weakly K{\"u}nneth if and only if any of the following equivalent conditions hold: 
\begin{enumerate}[(i)]
\item $\underline{\pi}_0(DX) \boxtimes \underline{\pi}_0(X) \rightarrow \underline{\pi}_0(DX \wedge X)$ is an isomorphism.
\item $\phi\colon \upi^0(Y) \boxtimes \upi_0(X) \cong \upi_0(DY) \boxtimes \upi_0(X) \rightarrow \upi_0(DY \wedge X) \cong \pi_0([Y,X])$ is an isomorphism for all dualizable objects $Y$. 
\item $\phi\colon \upi_0(Y) \boxtimes \upi_0(X) \rightarrow \upi_0(Y \wedge X)$ is an isomorphism for all dualizable objects $Y$.  
\end{enumerate}
\end{proposition}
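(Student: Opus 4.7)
The plan is to establish the circle of implications \emph{weakly K{\"u}nneth} $\Rightarrow$ (iii) $\Rightarrow$ (ii) $\Rightarrow$ (i) $\Rightarrow$ \emph{weakly K{\"u}nneth}, adapting the argument of \cite[Proposition 1.2]{FLM} to account for the $\Mack_G$-valued $\upi_0$ and the enlarged class of weakly K{\"u}nneth objects. The implications (iii) $\Rightarrow$ (ii) and (ii) $\Rightarrow$ (i) are essentially formal: the latter is just the specialization $Y = X$, while the former follows by replacing $Y$ with $DY$ and using both $\upi_0(DY) \cong \upi^0(Y)$ and the internal-hom identification $[Y,X] \simeq DY \wedge X$ valid for dualizable $Y$.

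For \emph{weakly K{\"u}nneth} $\Rightarrow$ (iii), I would reduce, using the additivity of $\boxtimes$, $\wedge$, and $\upi_0$ in both variables together with closure under retracts, to the case $X = G/H_+ \wedge \sphere_G$ for an arbitrary subgroup $H \leq G$. In this case the Wirthm{\"u}ller adjunction supplies an identification $\upi_0(G/H_+ \wedge \sphere_G) \cong \uparrow_H^G \uA$, and the comparison map becomes the projection formula for Mackey functors
\[ \upi_0(Y) \boxtimes \uparrow_H^G \uA \;\cong\; \uparrow_H^G \downarrow_H^G \upi_0(Y) \;\cong\; \upi_0(Y \wedge G/H_+ \wedge \sphere_G), \]
which is an isomorphism.

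The key step is (i) $\Rightarrow$ \emph{weakly K{\"u}nneth}, and I expect this to be the main obstacle. The coevaluation $\eta\colon \sphere_G \to X \wedge DX$ of the duality represents a class in $\upi_0(X \wedge DX)(G/G)$, which under the isomorphism (i) corresponds to a class in $(\upi_0(X) \boxtimes \upi_0(DX))(G/G)$. Using the Dress-pairing description of $\boxtimes$ from \cref{MapsOutOfBoxProduct} together with the additivity of Mackey functors, I would write this class as a finite sum $\sum_i \tr_{H_i}^G(a_i \otimes b_i)$ with $a_i \in \upi_0(X)(G/H_i)$ and $b_i \in \upi_0(DX)(G/H_i)$. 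The Wirthm{\"u}ller adjunction converts each $a_i$ into a map $f_i\colon G/H_{i+} \wedge \sphere_G \to X$ and, dually, each $b_i$ into a map $g_i\colon X \to G/H_{i+} \wedge \sphere_G$; the triangle identity for $(\eta, \epsilon)$ should then force $\sum_i f_i \circ g_i = \id_X$, exhibiting $X$ as a retract of $\bigvee_i G/H_{i+} \wedge \sphere_G$.

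The hard part of this last step is twofold: first, one must verify that the duality isomorphism $\upi_0(DX)(G/H) \cong [X, G/H_+ \wedge \sphere_G]^G$ is compatible with transfers, so that the Dress-pairing decomposition genuinely assembles into a single stable map $\sum_i f_i \circ g_i$; second, one must carry out the triangle-identity bookkeeping to confirm that after substituting the decomposition of $\eta$ the composite $\epsilon \circ (\id_X \wedge \eta)$ really evaluates to $\id_X$. Once these are in hand, the proposition follows.
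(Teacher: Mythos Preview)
Your proposal is correct and follows exactly the approach the paper intends: the paper's proof is simply ``See the proof of \cite[Proposition 1.2]{FLM},'' and what you have written is precisely the adaptation of that argument to the $\Mack_G$-enriched setting with the wider class of weakly K{\"u}nneth objects. Your identification of the key step---decomposing the coevaluation class in $(\upi_0(X) \boxtimes \upi_0(DX))(G/G)$ as a finite sum of transferred simple tensors and then invoking the triangle identity---is exactly what the FLM argument does in the $\Ab$-enriched case, and the bookkeeping you flag as ``hard'' is routine once the self-duality of $G/H_+$ under Wirthm{\"u}ller is used.
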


\begin{proof}
See the proof of \cite[Proposition 1.2]{FLM}. 
\end{proof}

The above proposition generalizes \cite[Proposition 1.2]{FLM} in two ways: we take $\pi_0$ as valued in $\Mack_G$, but we also replace K\"unneth with weakly K\"unneth. This is because K\"unneth objects do not coincide with dualizable objects in $\Mack_G$. In particular, if $X$ is a K\"unneth object in $\Ho(\Sp^G)$, then $\upi_0(X)$ is a K\"unneth object in $\Mack_G$. On the other hand, if $X$ is a weakly K\"unneth object in $\Ho(\Sp^G)$, then $\upi_0(X)$ is a dualizable Mackey functor.

\begin{theorem}
\label{PiIso}
For $G$ a finite group, the map $\upi_0\colon \Iso(w\Ho(\Sp^G)) \rightarrow \Iso(dMack_G)$ is a isomorphism of semirings. In particular, taking units gives an isomorphism
\[ \upi_0\colon \Pic(w\Ho(\Sp^G)) \rightarrow \Pic(d\Mack_G) = \Pic(\Mack_G). \]
\end{theorem}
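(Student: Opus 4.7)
The plan is to show that $\upi_0$ restricts to a strong symmetric monoidal equivalence of categories $w\Ho(\Sp^G) \xrightarrow{\sim} d\Mack_G$; the semiring isomorphism on isomorphism classes and the Picard group statement then follow immediately, using for the latter that every invertible object in either category is already dualizable, hence $\Pic(d\Mack_G) = \Pic(\Mack_G)$.

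First I would check that $\upi_0$ is well-defined and compatible with the monoidal structures. For $X \in w\Ho(\Sp^G)$, \cref{PropOfKunneth}(i) gives the K\"unneth isomorphism $\upi_0(DX) \boxtimes \upi_0(X) \xrightarrow{\sim} \upi_0(DX \wedge X)$, through which the Spanier--Whitehead unit and counit of $X$ transport to exhibit $\upi_0(X)$ as dualizable in $\Mack_G$ with dual $\upi_0(DX)$. Additivity of $\upi_0$ gives $\upi_0(X \vee Y) \cong \upi_0(X) \oplus \upi_0(Y)$, while \cref{PropOfKunneth}(iii) gives $\upi_0(X \wedge Y) \cong \upi_0(X) \boxtimes \upi_0(Y)$ for $X$ weakly K\"unneth and $Y$ dualizable. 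So $\upi_0$ descends to a well-defined semiring homomorphism on isomorphism classes.

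Second I would establish full faithfulness on weakly K\"unneth objects. Using the standard identification $\upi_0(Y)(\sfrac{G}{H}) \cong [\sfrac{G}{H}_+ \wedge \sphere_G, Y]^G$ together with $\upi_0(\sphere_G) \cong \underline{A}$, one reads off $\upi_0(\sfrac{G}{H}_+ \wedge \sphere_G) \cong \uparrow_H^G \underline{A}$; this is the representable Mackey functor at $\sfrac{G}{H}$, so Yoneda gives the identification
\[ [\sfrac{G}{H}_+ \wedge \sphere_G,\; \sfrac{G}{K}_+ \wedge \sphere_G]^G \;\cong\; \Hom_{\Mack_G}\bigl(\uparrow_H^G \underline{A},\; \uparrow_K^G \underline{A}\bigr). \]
Biadditivity of both hom-bifunctors extends full faithfulness to finite wedges of induced spheres, and a routine naturality argument propagates it further to all retracts of such wedges. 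Injectivity of $\upi_0$ on $\Iso(w\Ho(\Sp^G))$ follows at once, since a fully faithful functor reflects isomorphisms.

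Finally, essential surjectivity. Every dualizable $\underline{M} \in \Mack_G$ is a retract of a finite sum $\bigoplus_i \uparrow_{H_i}^G \underline{A}$ of representables, cut out by an idempotent $e$ whose image is $\underline{M}$. By the fullness established above, $e$ lifts to an idempotent on $W \coloneqq \bigvee_i \sfrac{G}{H_i}_+ \wedge \sphere_G$ in $\Ho(\Sp^G)$, which splits by idempotent completeness of this triangulated category; the resulting summand $X$ is weakly K\"unneth by construction and satisfies $\upi_0(X) \cong \underline{M}$. The main obstacle I anticipate is the bookkeeping needed to upgrade full faithfulness from induced spheres through finite wedges to arbitrary retracts while remaining compatible with the monoidal structure; once this is in place, the remaining content is the K\"unneth formula in \cref{PropOfKunneth} together with standard idempotent splitting.
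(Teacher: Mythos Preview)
Your proposal is correct and is precisely the argument the paper has in mind: the paper's proof is simply a pointer to \cite[Proposition~1.5]{FLM}, and your outline (K\"unneth formula from \cref{PropOfKunneth} for monoidality, Yoneda on representables for full faithfulness, idempotent splitting for essential surjectivity) is exactly how that cited proof runs, transported from $A(G)$-modules to $\Mack_G$ with ``weakly K\"unneth'' playing the role of ``K\"unneth'' so that the target becomes dualizable (i.e.\ finitely generated projective) Mackey functors rather than K\"unneth ones.
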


\begin{proof}
See the proof of \cite[Proposition 1.5]{FLM}, in which weakly K\"unneth corresponds to finitely generated and projective. 
\end{proof}

Moreover, we have analogous results for K{\"u}nneth objects. By definition, $\upi_0$ of a K{\"u}nneth object in $\Ho(\Sp^G)$ is a K{\"u}nneth object in $\Mack_G$. Similarly, we get a map $\upi_0\colon  \Iso(k\Ho(\Sp^G)) \rightarrow \Iso(kMack_G)$ of semirings.

\begin{theorem}
For $G$ a finite group, the map $\upi_0\colon  \Iso(k\Ho(\Sp^G)) \rightarrow \Iso(k\Mack_G)$ is an isomorphism of semirings. In particular, taking units gives an isomorphism
\[ \upi_0\colon \Pic(k\Ho(\Sp^G)) \rightarrow \Pic(k\Mack_G). \]
\end{theorem}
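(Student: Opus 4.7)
The plan is to deduce this theorem by restricting the semiring isomorphism established in \cref{PiIso}. Since $k\Ho(\Sp^G) \subseteq w\Ho(\Sp^G)$ and $k\Mack_G \subseteq d\Mack_G$ are full symmetric monoidal subcategories, and $\upi_0$ is strong symmetric monoidal on weakly K\"unneth objects by \cref{PropOfKunneth}, it suffices to show that the underlying bijection of isomorphism classes furnished by \cref{PiIso} restricts to a bijection between K\"unneth $G$-spectra and K\"unneth $G$-Mackey functors.

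The forward direction is immediate from the additivity of $\upi_0$ together with the identification $\upi_0(\sphere_G) \cong \uA$ of the monoidal unit: if $X$ is a retract of $\sphere_G^{\vee n}$, then $\upi_0(X)$ is a retract of $\uA^{\oplus n}$, hence K\"unneth in $\Mack_G$. For the reverse direction, I would lift idempotents. Given a K\"unneth Mackey functor $\ulm$, realized as a retract of $\uA^{\oplus n}$ via an idempotent $e\colon \uA^{\oplus n} \to \uA^{\oplus n}$, the key observation is that $\upi_0$ is fully faithful on the full subcategory of $\Ho(\Sp^G)$ spanned by finite wedges of $\sphere_G$. Indeed, using $\pi_0^G(\sphere_G) \cong A(G)$ together with Yoneda applied to $\uA$, one obtains
\[
[\sphere_G^{\vee n}, \sphere_G^{\vee m}]_G \;\cong\; A(G)^{nm} \;\cong\; \Hom_{\Mack_G}(\uA^{\oplus n}, \uA^{\oplus m}),
\]
which permits a unique lift of $e$ to an idempotent $\tilde{e}\colon \sphere_G^{\vee n} \to \sphere_G^{\vee n}$.

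The final step exploits that $\Ho(\Sp^G)$ is idempotent complete, so $\tilde{e}$ splits and its image $X$ is a retract of $\sphere_G^{\vee n}$, hence a K\"unneth $G$-spectrum. Applying $\upi_0$ to the splitting diagram for $\tilde{e}$ recovers the splitting diagram for $e$ via the fully faithful embedding above, and hence $\upi_0(X) \cong \ulm$. The step I expect to require the most care is verifying this compatibility between the idempotent splittings on the two sides, but this reduces to tracking $\tilde{e}$ through an additive fully faithful functor and should be largely formal. Passing to units of the resulting semiring isomorphism then yields the claimed isomorphism on Picard groups.
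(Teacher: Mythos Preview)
Your proposal is correct and takes essentially the same approach as the paper: the paper simply notes that the proof is analogous to \cite[Proposition 1.5]{FLM}, and your idempotent-lifting argument via the identification $[\sphere_G^{\vee n}, \sphere_G^{\vee m}]_G \cong A(G)^{nm} \cong \Hom_{\Mack_G}(\uA^{\oplus n}, \uA^{\oplus m})$ is precisely that argument specialized to the K\"unneth setting.
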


\begin{proof}
Again, the proof is analogous to \cite[Proposition 1.5]{FLM}. 
\end{proof}

\begin{remark}\label{PotentialDegeneracy}
We have the following commutative square:
\[ 
\begin{tikzpicture}
\node (02) at (0,2) {$\Pic(k\Ho(\Sp^G))$};
\node (32) at (4,2) {$\Pic(w\Ho(\Sp^G))$};
\node (00) at (0,0) {$\Pic(k\Mack_G)$};
\node (30) at (4,0) {$\Pic(d\Mack_G)$};

\draw[right hook->] (02) to (32);
\draw[right hook->] (00) to (30);

\draw[->] (02) to node[left] {$\upi_0$} (00);
\draw[->] (32) to node[right] {$\upi_0$} (30);
\end{tikzpicture}
\]
It isn't remarkable that this diagram commutes; we simply emphasize that the bottom horizontal map \textit{need not} be an isomorphism. If one instead considers $A(G)$-modules, the following diagram commutes:  
\[ 
\begin{tikzpicture}
\node (02) at (0,2) {$\Pic(k\Ho(\Sp^G))$};
\node (32) at (4,2) {$\Pic(w\Ho(\Sp^G))$};
\node (00) at (0,0) {$\Pic(k\Mod_{A(G)})$};
\node (30) at (4,0) {$\Pic(d\Mod_{A(G)})$};

\draw[right hook->] (02) to (32);
\draw[->] (00) to node[below] {$\cong$} (30);

\draw[->] (02) to node[left] {$\pi_0$} (00);
\draw[->] (32) to node[right] {$\pi_0$} (30);
\end{tikzpicture}
\]
This comparison tells us that, a priori, considering $\Ho(\Sp^G)$ as $\Mack_G$-enriched gives a richer analysis of invertible $G$-spectra. However, we will see in \cref{BigTheorem} that the bottom left map is an isomorphism for finite groups. 
\end{remark}

\subsection{Building Invertible $G$-Spectra}
As the preceding theorems suggest, \cite{FLM} gives a construction for invertible K\"unneth objects in $\Ho(\Sp^G)$ coming from $\Pic(A(G))$. Moreover, this construction is inverse to the map induced by $\pi_0$. We give a somewhat different construction here whose input is $\Pic(\Mack_G)$.  

\begin{theorem}\label{PicEmbeddings}
Let $G$ be a finite group. There are embeddings
\[ kE_{(-)}\colon \Pic(k\Mack_G) \hookrightarrow \Pic(\Ho(\Sp^G)) \]
and 
\[ E_{(-)}\colon \Pic(\Mack_G) \hookrightarrow \Pic(\Ho(\Sp^G)) \]
whose images are the isomorphism classes of invertible K{\"u}nneth and weakly K{\"u}nneth objects in $G$-spectra, respectively. 
\end{theorem}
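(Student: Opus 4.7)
The plan is to invert the semiring isomorphisms from the two preceding theorems and compose with the fully-faithful symmetric monoidal inclusions into $\Ho(\Sp^G)$. Explicitly, for $\underline{M} \in \Pic(k\Mack_G)$, choose a K\"unneth $G$-spectrum $kE_{\underline{M}}$ with $\upi_0(kE_{\underline{M}}) \cong \underline{M}$; such a $G$-spectrum exists and is unique up to isomorphism because $\upi_0\colon \Iso(k\Ho(\Sp^G)) \to \Iso(k\Mack_G)$ is an isomorphism of semirings, so $kE_{(-)}$ is simply the inverse of the induced isomorphism $\upi_0\colon \Pic(k\Ho(\Sp^G)) \xrightarrow{\cong} \Pic(k\Mack_G)$. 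Define $E_{(-)}$ analogously using \cref{PiIso}, together with the observation that $\Pic(\Mack_G) = \Pic(d\Mack_G)$ since every invertible object in a closed symmetric monoidal category is dualizable.

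Next, I would check injectivity of the composites $\Pic(k\Mack_G) \xrightarrow{\cong} \Pic(k\Ho(\Sp^G)) \hookrightarrow \Pic(\Ho(\Sp^G))$ and $\Pic(\Mack_G) \xrightarrow{\cong} \Pic(w\Ho(\Sp^G)) \hookrightarrow \Pic(\Ho(\Sp^G))$. The inclusions $k\Ho(\Sp^G), w\Ho(\Sp^G) \hookrightarrow \Ho(\Sp^G)$ are full and strong symmetric monoidal, so two invertible objects of the subcategory which become isomorphic in $\Ho(\Sp^G)$ were already isomorphic in the subcategory; the induced maps on Picard groups are therefore injective.

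The remaining step is to identify the images. By construction, the image of $kE_{(-)}$ is $\Pic(k\Ho(\Sp^G)) \subseteq \Pic(\Ho(\Sp^G))$ and similarly the image of $E_{(-)}$ is $\Pic(w\Ho(\Sp^G))$. To see these exhaust the invertible K\"unneth (resp.\ weakly K\"unneth) isomorphism classes in $\Pic(\Ho(\Sp^G))$, it suffices to verify that if $X$ is an invertible object of $\Ho(\Sp^G)$ lying in $k\Ho(\Sp^G)$ or $w\Ho(\Sp^G)$, then its inverse $DX$ lies in the same subcategory. For the K\"unneth case this is immediate from $D\sphere_G \cong \sphere_G$ together with the fact that duality commutes with finite wedges and preserves retracts; a retract of $\bigvee_{i=1}^n \sphere_G$ therefore dualizes to a retract of the same object. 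The weakly K\"unneth case uses in addition the Wirthm\"uller self-duality $D(\sfrac{G}{H}_+ \wedge \sphere_G) \cong \sfrac{G}{H}_+ \wedge \sphere_G$ for $H \leq G$, so a retract of $\bigvee_{i=1}^n (\sfrac{G}{H_i}_+ \wedge \sphere_G)$ again dualizes into the same form.

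The only genuine content beyond the bookkeeping of combining the two preceding theorems is this closure of the (weakly) K\"unneth subcategories under duality; but this reduces to standard self-dualities of the equivariant sphere and of finite $G$-orbits, so I expect no real obstacle.
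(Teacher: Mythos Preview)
Your proposal is correct and logically complete. Both you and the paper ultimately define $E_{(-)}$ as the inverse of the isomorphism $\upi_0\colon \Pic(w\Ho(\Sp^G)) \xrightarrow{\cong} \Pic(\Mack_G)$ established in \cref{PiIso}, so the core idea is the same. The difference is one of presentation: you invoke the semiring isomorphisms abstractly and then supply the closure-under-duality argument (via $D\sphere_G \cong \sphere_G$ and Wirthm\"uller self-duality) to pin down the images, whereas the paper builds $E_{\underline{M}}$ explicitly by lifting the idempotent associated to a retraction $\bigoplus_i \underline{A}_{G/H_i} \twoheadrightarrow \underline{M}$ to an idempotent on $\bigvee_i (G/H_i)_+ \wedge \sphere_G$ and localizing. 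Your route is cleaner for this theorem in isolation. What the paper's explicit construction buys is the formula $\upi_*(E_{\underline{M}} \wedge X) \cong \underline{M} \boxtimes \upi_*(X)$ for \emph{arbitrary} $X$, not just dualizable $X$; this is used immediately afterward to show $E_{\underline{M}}$ lies in $\ker(\Dim)$ by smashing with $\widetilde{EP}$, which is not dualizable. If you adopt your streamlined proof here, you will need to recover that formula separately (e.g., from invertibility of $E_{\underline{M}}$ and a K\"unneth argument) before the next result.
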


\begin{proof}
Throughout, we denote $\uparrow_{H_i}^G \underline{A}$ by $\underline{A}_{\sfrac{G}{H_i}}$. Let $\underline{M}$ be an invertible Mackey functor. Since invertible objects are dualizable, there exists a retraction 
\[ r\colon \bigoplus_i \underline{A}_{\sfrac{G}{H_i}} \twoheadrightarrow \underline{M} \]
with section $s\colon \underline{M} \hookrightarrow \bigoplus_i \underline{A}_{\sfrac{G}{H_i}}$. This yields an idempotent
\[ s \circ r \in \End\left( \bigoplus_i \underline{A}_{\sfrac{G}{H_i}} \right). \]
One can check that localizing $\bigoplus_i \underline{A}_{\sfrac{G}{H_i}}$ at this idempotent realizes $\underline{M}$. That is, 
\[ \underline{M} \cong \bigoplus_i \underline{A}_{\sfrac{G}{H_i}}[e_r^{-1}]. \]
Moreover, our idempotent lifts to an idempotent 
\[ e_r \in \End\left(\bigvee_i {\sfrac{G}{H_i}}_+ \wedge \sphere_G\right). \]
This follows from identifying $\upi_0(\vee_i {\sfrac{G}{H_i}}_+ \wedge \sphere_G)$ with $\oplus_i \underline{A}_{\sfrac{G}{H_i}}$. Localizing this wedge of spheres at $e_r$ yields a weakly K\"unneth (hence, connective) $G$-spectrum 
\[ E_{\underline{M}} \coloneqq \left(\bigvee_i {\sfrac{G}{H_i}}_+ \wedge \sphere_G\right)[e_r^{-1}]. \]

To see that $E_{\underline{M}}$ is invertible, we first observe that the homotopy of $E_{\underline{M}}$ is given by $\underline{M} \boxtimes \pi_*(\sphere_G)$ by construction. More generally, the homotopy of $E_{\underline{M}} \wedge X$ for any $X \in \Ho(\Sp^G)$ is given by 
\[ \underline{M} \boxtimes \upi_*(X). \]
One can leverage this description to show that $E_{\underline{M}}$ is independent of the choice of $\underline{M}$, retraction, and section. Indeed, if a different retraction or section is chosen, then one can build a map which induces a natural isomorphism of equivariant homology theories. An isomorphism of invertible Mackey functors will similarly yield a natural isomorphism of equivariant homology theories. In particular, we see that $E_{\underline{M}}$ is invertible with inverse given by $E_{\underline{M}^{-1}}$. 

We have constructed a set-theoretic section 
\[ E_{(-)}\colon \Pic(\Mack_G) \xrightarrow{\cong} \Pic(w\Ho(\Sp^G)) \]
of $\upi_0$, which is a group isomorphism by \cref{PiIso}. Consequently, $E_{(-)}$ is multiplicative by virtue of being the inverse of a group isomorphism. The analogous construction for invertible K\"unneth Mackey functors yields an embedding $kE_{(-)}\colon \Pic(k\Mack_G) \rightarrow \Pic(k\Ho(\Sp^G))$ which is inverse to $\upi_0\colon \Pic(k\Ho(\Sp^G)) \rightarrow \Pic(k\Mack_G)$.  
\end{proof}

\begin{corollary} \label{Folklore}
For $G$ a finite group, we have an isomorphism $\Pic(k\Mack(G)) \cong \Pic(A(G))$. 
\end{corollary}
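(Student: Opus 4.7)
The plan is entirely formal once one has the two embeddings of Picard groups into $\Pic(\Ho(\Sp^G))$ established above. On the one hand, the Fausk--Lewis--May embedding theorem \cite[Theorem 0.2]{FLM}, specialized to the stable homotopy category $\Ho(\Sp^G)$, yields an injection
\[
\Pic(A(G)) \cong \Pic(\End(\sphere_G)) \hookrightarrow \Pic(\Ho(\Sp^G))
\]
whose image is precisely the set of isomorphism classes of invertible K\"unneth $G$-spectra. On the other hand, \cref{PicEmbeddings} provides an injection
\[
kE_{(-)}\colon \Pic(k\Mack_G) \hookrightarrow \Pic(\Ho(\Sp^G))
\]
with the same image. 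Two injective group homomorphisms with coincident image canonically identify their sources, giving the desired isomorphism.

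Concretely, I would describe the comparison as follows: send an invertible K\"unneth Mackey functor $\underline{M}$ to $\pi_0(kE_{\underline{M}})$, where $\pi_0$ is the $A(G)$-module-valued functor $\ev_{\sfrac{G}{G}}\circ\upi_0$. In the other direction, given an invertible $A(G)$-module $M$, apply the FLM realization to produce an invertible K\"unneth $G$-spectrum and then pass to $\upi_0$, landing in $\Pic(k\Mack_G)$ by the K\"unneth variant of \cref{PiIso}. These assignments are mutually inverse group homomorphisms because, on the one side, $kE_{(-)}$ was built in \cref{PicEmbeddings} as a set-theoretic section of $\upi_0$ which is automatically multiplicative by virtue of inverting a group isomorphism, and on the other side the FLM realization is inverse to $\pi_0$ on its image by construction.

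There is no real obstacle at this stage: the technical content has already been discharged in proving \cref{PicEmbeddings} and in invoking \cite[Theorem 0.2]{FLM}. The corollary is the purely formal statement that two subgroups of $\Pic(\Ho(\Sp^G))$ which coincide as subsets are canonically isomorphic as abstract groups.
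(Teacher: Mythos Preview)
Your argument is correct and follows exactly the paper's approach: both $\Pic(k\Mack_G)$ and $\Pic(A(G))$ embed into $\Pic(\Ho(\Sp^G))$ with image the invertible K\"unneth objects, so they are isomorphic. The paper's proof is the one-line observation you expand upon; your additional explicit description of the mutually inverse maps is helpful but not needed.
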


\begin{proof}
Observe that $\Pic(k\Mack_G)$ and $\Pic(A(G))$ embed into $\Pic(\Ho(\Sp^G))$ with the same image. 
\end{proof}

Recall the \textit{$H$-geometric fixed points} functor $\Phi^H\colon \Ho(\Sp^G) \rightarrow \Ho(\Sp)$ for $(H)$ a conjugacy class of subgroup of $G$. On objects, this functor is given by 
\[ \Phi^H(X) \coloneqq (\widetilde{EP} \wedge X)^H. \]
Geometric fixed points is a strong symmetric monoidal functor which induces a map
\[ \Phi^H\colon \Pic(\Ho(\Sp^G)) \rightarrow \Pic(\Ho(\Sp)) \cong \Z. \]

\begin{definition}
Let $\Dim$ denote the map 
\[ \Pic(\Ho(\Sp^G)) \rightarrow C(G) \coloneqq \prod_{(H) \leq \Sub(G)/\text{conj}} \Z \]
induced by $\Dim(X)(H) = n$, where $n$ is such that $\Phi^H(X) \simeq \Sigma^n \sphere$. 
\end{definition}

Work of tom-Dieck and Petrie identifies the kernel of $\Dim$ as $\Pic(A(G))$, and the embedding $\Pic(A(G)) \rightarrow \Pic(\Ho(\Sp^G))$ given in \cite[Theorem 0.1]{FLM} presents $\Pic(A(G))$ as this kernel. 

\begin{proposition}
The image of $kE_{(-)}\colon \Pic(k\Mack_G) \rightarrow \Pic(\Ho(\Sp^G))$ is precisely the kernel of $\Dim$. 
\end{proposition}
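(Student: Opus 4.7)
The plan is to establish both containments separately, with the forward inclusion being a direct monoidality argument and the reverse inclusion reducing to the FLM exact sequence from \cref{FLMSplitting}.

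For the inclusion $\im(kE_{(-)}) \subseteq \ker(\Dim)$, let $X = kE_{\underline{M}}$ for some invertible K\"unneth Mackey functor $\underline{M}$. By the characterization in \cref{PicEmbeddings}, $X$ is an invertible K\"unneth $G$-spectrum, hence a retract of a finite wedge $\bigvee_{i=1}^n \sphere_G$. Since $\Phi^H$ is strong symmetric monoidal and additive with $\Phi^H(\sphere_G) \simeq \sphere$, it sends $X$ to a retract of $\bigvee_{i=1}^n \sphere$, which is a connective spectrum. On the other hand, $\Phi^H$ preserves invertibility, so $\Phi^H(X) \simeq \Sigma^{n_H}\sphere$ for some integer $n_H$. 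Connectivity of $\Sigma^{n_H}\sphere$ forces $n_H \geq 0$. Running the same argument on $X^{-1}$, which is again an invertible K\"unneth object since $kE_{(-)}$ is a group homomorphism, produces $\Phi^H(X^{-1}) \simeq \Sigma^{-n_H}\sphere$ connective, hence $-n_H \geq 0$. We conclude $n_H = 0$ for every conjugacy class $(H)$, so $\Dim(X) = 0$.

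For the reverse inclusion, we appeal to \cref{FLMSplitting}, which provides an exact sequence $0 \to \Pic(A(G)) \to \Pic(\Ho(\Sp^G)) \xrightarrow{\Dim} C(G)$ and identifies the image of the FLM embedding $\Pic(A(G)) \hookrightarrow \Pic(\Ho(\Sp^G))$ with $\ker(\Dim)$. By \cite[Theorem 0.2]{FLM}, this image consists precisely of the isomorphism classes of invertible K\"unneth $G$-spectra. By the defining property of $kE_{(-)}$ recorded in \cref{PicEmbeddings}, the image of $kE_{(-)}$ is also precisely the isomorphism classes of invertible K\"unneth $G$-spectra. Hence these two subgroups of $\Pic(\Ho(\Sp^G))$ agree, and both coincide with $\ker(\Dim)$.

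The content of this proposition is essentially bookkeeping: the hard work has already been done in \cref{PicEmbeddings} (which identifies $\im(kE_{(-)})$ with invertible K\"unneth $G$-spectra) and in FLM (which identifies $\ker(\Dim)$ with the same set). The only genuine verification is that $\Dim \circ kE_{(-)} = 0$, and the main mild obstacle is simply confirming that the map to $C(G)$ appearing in \cref{FLMSplitting} is the same $\Dim$ defined here, which is immediate from unwinding both definitions via $\Phi^H$.
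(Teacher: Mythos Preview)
Your proposal is correct and follows essentially the same approach as the paper: both identify $\im(kE_{(-)})$ and the FLM image of $\Pic(A(G))$ with the invertible K\"unneth $G$-spectra, then invoke the tom Dieck--Petrie/FLM identification of the latter with $\ker(\Dim)$. Your direct connectivity argument for the forward inclusion is a pleasant extra (and correct), but it is redundant once you have the FLM identification, which already yields both containments at once; the paper accordingly dispenses with it in a single sentence.
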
 

\begin{proof}
The image of $kE_{(-)}$ as above is equal to the image of $\Pic(A(G))$. 
\end{proof}

\begin{theorem}
The image of the embedding $E_{(-)}\colon \Pic(\Mack_G) \rightarrow \Pic(\Ho(\Sp^G))$ is contained in $\ker(\Dim)$. 
\end{theorem}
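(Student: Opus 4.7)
The plan is to show that for any invertible Mackey functor $\underline{M}$, every geometric fixed point $\Phi^H(E_{\underline{M}})$ is isomorphic (in $\Ho(\Sp)$) to $\sphere$, so the dimension function vanishes at every $H$.

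First I would use the construction from the preceding theorem: $E_{\underline{M}}$ is weakly K\"unneth, so it is a retract of a finite wedge $\bigvee_i {\sfrac{G}{H_i}}_+ \wedge \sphere_G$. Since $\Phi^H$ is a strong symmetric monoidal (in particular, additive) functor $\Ho(\Sp^G) \to \Ho(\Sp)$, it preserves retracts and wedges, so $\Phi^H(E_{\underline{M}})$ is a retract of $\bigvee_i \Phi^H({\sfrac{G}{H_i}}_+ \wedge \sphere_G)$.

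Next I would invoke the standard computation $\Phi^H(\Sigma^\infty_G X_+) \simeq \Sigma^\infty (X^H)_+$ for $G$-spaces $X$. Applied with $X = \sfrac{G}{H_i}$, this gives a finite wedge of copies of $\sphere$, since $(\sfrac{G}{H_i})^H$ is a finite (possibly empty) set. Therefore $\Phi^H(E_{\underline{M}})$ is a retract of a finite wedge of copies of $\sphere$, i.e., an honest K\"unneth object of $\Ho(\Sp)$.

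Finally, because $E_{\underline{M}}$ is invertible and $\Phi^H$ is strong symmetric monoidal, $\Phi^H(E_{\underline{M}})$ is invertible in $\Ho(\Sp)$. The last step is to note that an invertible K\"unneth spectrum must be equivalent to $\sphere$: by the degree description $\Pic(\Ho(\Sp)) \cong \Z$, every invertible spectrum is some $\Sigma^n \sphere$, but for $n \neq 0$ this has its nontrivial homotopy concentrated away from degree $0$ and so cannot be a retract of a finite wedge of $\sphere$'s. Hence $\Phi^H(E_{\underline{M}}) \simeq \sphere$, which means $\Dim(E_{\underline{M}})(H) = 0$. Running this argument for every conjugacy class of subgroups $(H) \leq G$ shows $E_{\underline{M}} \in \ker(\Dim)$, as desired. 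The only nontrivial input is the identification of invertible K\"unneth objects in $\Ho(\Sp)$ with $\sphere$, which is a direct consequence of the classification of invertible spectra; the rest is formal from the properties of $\Phi^H$ and the construction of $E_{\underline{M}}$.
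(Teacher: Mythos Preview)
Your argument is correct and reaches the same conclusion as the paper, but by a somewhat different route. The paper first observes that $E_{\underline{M}}$ is connective and then computes $\pi_0(\Phi^H E_{\underline{M}}) \cong \Z$ directly, via the chain
\[
\pi_0(\Phi^H E_{\underline{M}}) \;\cong\; \bigl(\upi_0(\downarrow_H^G E_{\underline{M}}) \boxtimes \upi_0(\widetilde{EP})\bigr)(\sfrac{H}{H}) \;\cong\; \Phi^H(\downarrow_H^G \underline{M}) \;\cong\; \Z,
\]
using the K\"unneth property of $E_{\underline{M}}$ (\cref{PropOfKunneth}) and the algebraic geometric fixed points of $\underline{M}$. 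You instead bypass this computation by applying the orbit formula $\Phi^H(\Sigma^\infty_G(\sfrac{G}{K})_+)\simeq \Sigma^\infty((\sfrac{G}{K})^H)_+$ to exhibit $\Phi^H E_{\underline{M}}$ as an honest K\"unneth spectrum, and then invoke $\Pic(\Ho(\Sp))\cong\Z$ to conclude it must be $\sphere$. Your approach is a bit more elementary in that it avoids the algebraic $\Phi^H$ on Mackey functors; the paper's approach, on the other hand, makes the link between topological and algebraic geometric fixed points explicit, which is thematically useful elsewhere in the paper.

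One small wording issue: ``strong symmetric monoidal (in particular, additive)'' is not a valid implication in general. The needed fact---that $\Phi^H$ preserves finite wedges and retracts---holds because $\Phi^H$ is an exact functor of stable categories (smashing with $\widetilde{EP}$ and taking genuine fixed points both preserve finite sums), not because of monoidality. This does not affect the validity of your argument.
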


\begin{proof}
Let $\underline{M}$ be an invertible Mackey functor. First, $E_{\underline{M}}$ is connective as it is a retract of connective objects by construction. We have the following isomorphisms for $(H)$ a conjugacy class of subgroups of $G$: 
\begin{eqnarray*}
\pi_0(\Phi^H E_{\underline{M}}) &\cong & \upi_0((\downarrow_H^G E_{\underline{M}}) \wedge \widetilde{EP})(\sfrac{H}{H}) \\
&\cong & (\upi_0(\downarrow_H^G E_{\underline{M}}) \boxtimes \upi_0(\widetilde{EP}))(\sfrac{H}{H}) \\
&\cong & (\downarrow_H^G \underline{M} \boxtimes \upi_0(\widetilde{EP}) )(\sfrac{H}{H}) \\
&\cong & \Phi^H (\downarrow_H^G \underline{M}) \\
&\cong & \Z.
\end{eqnarray*}
Since $\Phi^H E_{\underline{M}}$ is weakly equivalent to some suspension of $\sphere$, it must be that $\Phi^H E_{\underline{M}}$ is equivalent to $\sphere$. That is, $\Dim(E_{\underline{M}}) = 0$.  
\end{proof}

\begin{corollary} 
\label{BigTheorem}
Let $G$ be a finite group. The inclusion $k\Mack_G \subseteq \Mack_G$ induces an isomorphism on Picard groups $\Pic(k\Mack_G) \cong \Pic(\Mack_G)$. That is, every invertible Mackey functor is a K\"unneth object in $\Mack_G$.  
\end{corollary}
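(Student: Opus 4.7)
The plan is to observe that the inclusion $k\Mack_G \hookrightarrow \Mack_G$ is a strong symmetric monoidal functor on a full subcategory, so by \cref{PicFunctorial} it induces a group homomorphism $\iota\colon \Pic(k\Mack_G) \to \Pic(\Mack_G)$, and injectivity is automatic: any monoidal isomorphism in $\Mack_G$ between K\"unneth objects already lives in $k\Mack_G$. The content of the corollary is surjectivity, which I would establish by a short diagram chase in $\Pic(\Ho(\Sp^G))$ using the two embeddings constructed in \cref{PicEmbeddings}.

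Given $\underline{M} \in \Pic(\Mack_G)$, I would first form $E_{\underline{M}} \in \Pic(\Ho(\Sp^G))$ via the embedding of \cref{PicEmbeddings}. The theorem immediately preceding the corollary gives $E_{\underline{M}} \in \ker(\Dim)$, while the proposition before it identifies $\ker(\Dim)$ with the image of $kE_{(-)}\colon \Pic(k\Mack_G) \hookrightarrow \Pic(\Ho(\Sp^G))$. Hence $E_{\underline{M}} \cong kE_{\underline{N}}$ for some invertible K\"unneth Mackey functor $\underline{N}$. Applying $\upi_0$ and using that both $E_{(-)}$ and $kE_{(-)}$ are set-theoretic sections of $\upi_0$ (by \cref{PiIso} and its K\"unneth analogue), I conclude
\[
\underline{M} \;\cong\; \upi_0(E_{\underline{M}}) \;\cong\; \upi_0(kE_{\underline{N}}) \;\cong\; \underline{N},
\]
so $\underline{M}$ is isomorphic to the K\"unneth Mackey functor $\underline{N}$, witnessing $\iota$ as surjective.

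I do not anticipate any real obstacle; all of the technical content already lives upstream. The construction of $E_{(-)}$ in \cref{PicEmbeddings}, the dimension calculation $\Dim \circ E_{(-)} = 0$ in the preceding theorem, and the tom Dieck--Petrie identification of $\ker(\Dim)$ with the image of $kE_{(-)}$ (borrowed from \cite{FLM}) do all the heavy lifting. The corollary is then the clean conjunction of these three results with the $\upi_0$-isomorphism identifying invertible K\"unneth $G$-spectra with invertible K\"unneth Mackey functors.
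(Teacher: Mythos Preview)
Your argument is correct and is precisely the intended one: the paper leaves this corollary without explicit proof because it is the immediate conjunction of the two embeddings of \cref{PicEmbeddings}, the identification of $\ker(\Dim)$ with the image of $kE_{(-)}$, and the preceding theorem placing the image of $E_{(-)}$ inside $\ker(\Dim)$. Your diagram chase through $\Pic(\Ho(\Sp^G))$ and back along $\upi_0$ spells out exactly what the paper is asking the reader to supply.
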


\begin{corollary}
For a finite group $G$, there is an isomorphism 
\[ \Pic(A(G)) \cong \Pic(\Mack_G). \]
\end{corollary}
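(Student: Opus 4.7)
The plan is to simply chain together the two preceding corollaries. By \cref{Folklore}, evaluating invertible Künneth Mackey functors through the identification with invertible Künneth $G$-spectra furnishes an isomorphism $\Pic(k\Mack_G) \cong \Pic(A(G))$, both groups being identified with the image of $kE_{(-)}$ inside $\Pic(\Ho(\Sp^G))$, which is exactly the subgroup realized by \cite[Theorem 0.1]{FLM}. Meanwhile, \cref{BigTheorem} asserts that the inclusion $k\Mack_G \hookrightarrow \Mack_G$ induces an isomorphism $\Pic(k\Mack_G) \cong \Pic(\Mack_G)$, so every invertible Mackey functor is already a Künneth object.

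Concretely, I would write the composite
\[
\Pic(\Mack_G) \xleftarrow{\cong} \Pic(k\Mack_G) \xrightarrow{\cong} \Pic(A(G)),
\]
where the left arrow is the inverse of the isomorphism from \cref{BigTheorem} and the right arrow is the isomorphism from \cref{Folklore}. Both maps are group isomorphisms, so the composition is an isomorphism of abelian groups, which is exactly the assertion of the corollary.

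There is no real obstacle here: all the work has been done in the body of the section. In particular, the nontrivial content has been absorbed into \cref{BigTheorem}, whose proof required constructing the embedding $E_{(-)}\colon \Pic(\Mack_G) \hookrightarrow \Pic(\Ho(\Sp^G))$ via \cref{PicEmbeddings} and then showing that its image sits inside $\ker(\Dim)$, which coincides with the image of $kE_{(-)}$. Once those ingredients are in hand, the corollary is a one-line composition and does not require any further calculation.
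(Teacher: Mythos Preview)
Your proposal is correct and matches the paper's approach exactly: the corollary is stated immediately after \cref{BigTheorem} without proof precisely because it follows by composing the isomorphism $\Pic(k\Mack_G) \cong \Pic(A(G))$ of \cref{Folklore} with the isomorphism $\Pic(k\Mack_G) \cong \Pic(\Mack_G)$ of \cref{BigTheorem}. There is nothing to add.
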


\section{A Classification of Invertible Mackey Functors and $A(G)$-Modules}
\label{Classification Section}

In the case that $G=C_n$, a cyclic group of order $n$, there is already a classification of invertible Mackey functors given in \cite[Proposition 4.6]{Angeltveit}. It is shown that every invertible $C_n$-Mackey functor is isomorphic to some ``twisted Burnside Mackey functor". Here, we generalize this classification to all finite abelian groups with an approach heavily inspired by \cite{Angeltveit}. In particular, we build explicit representatives for the classes in $\Pic(\Mack_G)$ and $\Pic(A(G))$. The $A(G)$-modules we construct will have underlying group given by $A(G)$ but come equipped with a module structure that is not induced by the ring structure on $A(G)$. For this reason, we call such a module a ``twisted Burnside module". Similarly, we will refer to the Mackey functors constructed here as ``twisted Burnside Mackey functors".  

\subsection{Twisted Burnside Mackey Functors}

The objects constructed here are obtained by scaling the restriction maps in the Burnside Mackey functor, and we refer to the scalings as ``twists". We start with the object that parameterizes these so-called ``twists". 

\begin{definition}
The \emph{Burnside twists of \(G\)}, denoted \(\tw{G}\), is defined by \( \tw{G} = \prod_{H \leq G} \Z \).
We will denote elements of $\tw{G}$ with lowercase Latin letters, such as \(a\), \(b\), \(c\), and so on. For any such \(a \in \tw{G}\) and subgroup \(H\) of \(G\), we write \(a_H\) for the image of \(a\) under the projection from \(T_G\) onto its \(H\)-factor. 

Here, $T_G$ is meant to be the product of commutative monoids under multiplication and as such inherits the structure of a commutative monoid via pointwise multiplication. 
\end{definition}

\begin{definition}
Given a Burnside twist \(a \in \tw{G}\) and a subgroup \(H\) of \(G\), the \emph{\(H\)-restriction factor} of \(a\), denoted \(r_H^a\), is the integer
\[
r_H^a = \prod_{K \geq H} a_K.
\]
Note that \(r_G = a_G\) and that \(r_H\) divides \(r_J\) for any subgroup \(J\) of \(H\). When no confusion will arise, we write $r_H^a$ as $r_H$. 
\end{definition}

\begin{definition}
Given a Burnside twist \(a \in \tw{G}\), the \emph{\(a\)-twisted Burnside Mackey functor} \(\uA^a\) is level-wise equal to \(\uA\) (i.e., \(\uA^a(\sfrac{G}{H}) = A(H)\) for every subgroup \(H\) of \(G\)), and has the same transfers and (trivial) Weyl actions as \(\uA\). However, the restrictions in \(\uA^a\) are scaled from those in \(\uA\) using the restriction factors of \(a\) as follows: for any generator \(\sfrac{H}{J}\) in \(A(H)\),
\[
\tres{a}{H}{K}(\sfrac{H}{J}) = \frac{r_{J \cap K}}{r_J} \cdot \res^H_K(\sfrac{H}{J}) = \frac{r_{J \cap K}}{r_J} \left| \sfrac{H}{JK} \right| \cdot \sfrac{K}{J \cap K}.
\]
We use $\tres{a}{H}{K}$ to denote the restriction from $H$ to $K$ in $\underline{A}^a$, except in the case of $a=1$, when $\underline{A}^a = \underline{A}$. In this case, we use $\res_H^K$ as above. 
\end{definition}

\begin{remark}
In the case that $a, b \in \tw{G}$ satisfies $a_H = b_H$ for all proper subgroups $H$ of $G$, $\underline{A}^a$ is \textit{equal} to $\underline{A}^b$. For this reason, we assume without loss of generality in all that follows that $a_G = 1$ for any $a \in \tw{G}$. 
\end{remark}

\begin{proposition}
The \(a\)-twisted Burnside Mackey functor \(\uA^a\) is indeed a Mackey functor for any twist \(a \in \tw{G}\).
\end{proposition}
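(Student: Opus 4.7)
The plan is to verify each Mackey-functor axiom for $\uA^a$ in turn. Since $\uA^a$ agrees with $\uA$ on levels, on all transfers, and on all (trivial) Weyl actions, every axiom that does not mention a restriction map is inherited for free from the fact that $\uA$ is already known to be a Mackey functor. Moreover, since the ambient setting is that of a finite abelian group $G$, every conjugation map is the identity, so axioms (v) and (vi) are automatic as well. The verifications that remain are therefore:
\begin{enumerate}[(i)]
\item $\tres{a}{H}{H} = \id$,
\item $\tres{a}{K}{L} \circ \tres{a}{H}{K} = \tres{a}{H}{L}$ for $L \leq K \leq H$,
\item the simplified double-coset formula $\tres{a}{H}{L} \circ \tr_J^H = [H:JL]\, \tr_{J \cap L}^L \circ \tres{a}{J}{J \cap L}$.
\end{enumerate}
As a preliminary, I will check that the scalars $r_{J \cap K}/r_J$ are actually integers. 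This is immediate, since every subgroup containing $J$ also contains $J \cap K$, so the product defining $r_J$ is a sub-product of that defining $r_{J \cap K}$.

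Axiom (i) is immediate from the definition: when $K = H$, the prefactor collapses to $r_J/r_J = 1$ and $\res^H_H$ in $\uA$ is already the identity. For axiom (ii), I will evaluate both sides on a generator $\sfrac{H}{J}$ and compare the scalar prefactor and the combinatorial coefficient separately. The prefactors telescope: since $L \leq K$, one has $J \cap K \cap L = J \cap L$, so
\[
\frac{r_{J \cap K}}{r_J} \cdot \frac{r_{J \cap L}}{r_{J \cap K}} = \frac{r_{J \cap L}}{r_J}.
\]
The remaining combinatorial equality $[H:JK]\,[K:L(J \cap K)] = [H:JL]$ follows from the abelian-group identity $K \cap JL = L(J \cap K)$, which produces the isomorphism $JK/JL \cong K/L(J \cap K)$ via the second isomorphism theorem.

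For the double-coset axiom, I will apply both sides to a generator $\sfrac{J}{K}$ with $K \leq J$. Since $K \leq J$, one has $K \cap (J \cap L) = K \cap L$, so both sides acquire the same scalar prefactor $r_{K \cap L}/r_K$. The remaining combinatorial identity $[H:KL] = [H:JL]\,[J:K(J \cap L)]$ reduces, via the analogous abelian subgroup identity $J \cap KL = K(J \cap L)$, to the isomorphism $JL/KL \cong J/K(J \cap L)$. Taken together, this gives axiom (iii) above.

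The main obstacle is purely bookkeeping: several scaling factors $r_{(-)}$ and double-coset indices must be tracked simultaneously against one another. Once one isolates the recurring abelian subgroup identity $A \cap BC = B(A \cap C)$, valid whenever $B \leq A$, every remaining index computation reduces to a direct application of it, and the scalar prefactors collapse by the telescoping property of the $r_{(-)}$.
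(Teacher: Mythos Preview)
Your proof is correct and follows essentially the same approach as the paper: verify that the twisted restrictions compose correctly and satisfy the double-coset formula, all other axioms being inherited from $\uA$. The paper carries out the restriction-composition check by direct manipulation of orders (expanding $|\sfrac{H}{KL}|$ as $|H|\cdot|K\cap L|/(|K|\cdot|L|)$ and cancelling), whereas you organize the same computation around the Dedekind modular law $A \cap BC = B(A \cap C)$ for $B \leq A$ and the second isomorphism theorem; these are equivalent bookkeeping devices. Your version is slightly more complete in that you explicitly carry out the double-coset verification (the paper simply declares it ``similarly straightforward'') and note that the scalars $r_{J\cap K}/r_J$ are integers, a point the paper records in the definition but does not reinvoke in the proof.
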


\begin{proof}
First, note that transfers compose appropriately as they are the same as in $\underline{A}$. To see that the restrictions compose appropriately, let $J \leq K \leq H \leq G$ and $L \leq H$. Then, 
\begin{eqnarray*}
\tres{a}{K}{J} \left( \tres{a}{H}{K}(\sfrac{H}{L}) \right) &=& \tres{a}{K}{J} \left( \frac{r_{K \cap L}}{r_L} \cdot |\sfrac{H}{KL}| \cdot \sfrac{K}{K \cap L} \right) \\
&=& \frac{r_{K \cap L}}{r_L} \cdot |\sfrac{H}{KL}| \cdot \tres{a}{K}{J} \left (\sfrac{K}{K \cap L} \right) \\
&=& \frac{r_{K \cap L}}{r_L} \cdot |\sfrac{H}{KL}| \cdot \frac{r_{J \cap K \cap L}}{r_{K \cap L}} \cdot |\sfrac{K}{J(K \cap L)}| \cdot \sfrac{J}{J \cap K \cap L} \\
&=& \frac{r_{J \cap K \cap L}}{r_L} \cdot |\sfrac{H}{KL}| \cdot |\sfrac{K}{J(K \cap L)}| \cdot \sfrac{J}{J \cap K \cap L} \\
&=& \frac{r_{J \cap L}}{r_L} \cdot |H| \cdot \frac{|K \cap L|}{|K| \cdot |L|} \cdot |K| \cdot \frac{|J \cap K \cap L|}{|J| \cdot |K \cap L|} \cdot \sfrac{J}{J \cap L} \\
&=& 
\frac{r_{J \cap L}}{r_L}|H| \cdot \frac{|J \cap L|}{|J| \cdot |L|} \cdot \sfrac{J}{J \cap L} \\
&=& \frac{r_{J \cap L}}{r_L} \cdot |\sfrac{H}{JL}| \cdot \sfrac{J}{J \cap L} \\
&=& \tres{a}{H}{J}(\sfrac{H}{L}).
\end{eqnarray*}
The verification of the double coset formula is similarly straightforward. We conclude that $\underline{A}^a$ is a Mackey functor.  
\end{proof}

\begin{example}
Let $G = \mathcal{K} \coloneqq C_2 \times C_2$ be the Klein-four group. The subgroups of $\K$ consist of the trivial subgroup, three order two subgroups, and $\K$ itself. We will denote the order two subgroups as follows: 
\begin{itemize}
\item $L \coloneqq \langle (\gamma,e) \rangle$
\item $D \coloneqq \langle (\gamma,\gamma) \rangle$
\item $R \coloneqq \langle (e,\gamma) \rangle$,
\end{itemize}
where we denote the generator of $C_2$ by $\gamma$. We will write elements $\alpha \in \tw{G}$ as tuples $\alpha = (\alpha_e,\alpha_L,\alpha_D,\alpha_R, \alpha_\K)$. For $\alpha = (1,3,3,3,1)$, we have the following Lewis diagram for $\underline{A}^\alpha$ in which we have omitted all of the data which isn't changed by twisting:  
\[ 
\begin{tikzpicture}[scale=0.9]
		\node (K) at (0,4) {$\Z\{ \sfrac{\K}{e}, \sfrac{\K}{L}, \sfrac{\K}{D}, \sfrac{\K}{R}, \sfrac{\K}{\K}\}$};
		\node (L) at (-6,0) {$\Z\{ \sfrac{L}{e}, \sfrac{L}{L} \}$};
		\node (D) at (0,0) {$\Z\{ \sfrac{D}{e}, \sfrac{D}{D} \}$};
		\node (R) at (6,0) {$\Z\{ \sfrac{R}{e}, \sfrac{R}{R} \}$};
		\node (e) at (0,-4) {$\Z$};

		\draw[bend right=12,-Stealth] (e) to node[]{} (L);
		\draw[bend right=15,-Stealth] (e) to node[]{} (D);
		\draw[bend right=12,-Stealth] (e) to node[]{} (R);
		\draw[bend right=12,-Stealth] (L) to node[]{} (K);
		\draw[bend right=22,-Stealth] (D) to node[]{} (K);
		\draw[bend right=12,-Stealth] (R) to node[]{}(K);

		\draw[bend right=12,-Stealth] (L) to node[fill=white, inner sep=1.5pt]{$\mylittlematrix{2 & 9}$} (e);
		\draw[bend right=15,-Stealth] (D) to node[fill=white, inner sep=1.5pt]{$\mylittlematrix{2 & 9}$} (e);
		\draw[bend right=12,-Stealth] (R) to node[fill=white, inner sep=1.5pt]{$\mylittlematrix{2 & 9}$} (e);
		\draw[bend right=12,-Stealth] (K) to node[fill=white, inner sep=1.5pt]{$\mymidsizematrix{2 & 0 & 9 & 9 & 0 \\ 0 & 2 & 0 & 0 & 3}$} (L);
		\draw[bend right=22,-Stealth] (K) to node[fill=white, inner sep=0.5pt]{$\mytinymatrix{2 & 9 & 0 & 9 & 0 \\ 0 & 0 & 2 & 0 & 3}$} (D);
		\draw[bend right=12,-Stealth] (K) to node[fill=white, inner sep=1.5pt]{$\mymidsizematrix{2 & 9 & 9 & 0 & 0 \\ 0 & 0 & 0 & 2 & 3}$} (R);
	\end{tikzpicture}
\] 
\end{example}

\begin{lemma}
\label{lem: MapsOutOfTwisted}
For any Mackey functor $\underline{M}$ and Burnside twist $a \in \tw{G}$, the map
\[ \Hom(\underline{A}^a, \underline{M}) \rightarrow \bigoplus\limits_{H \leq G} \underline{M}(\sfrac{G}{H})^{W_G(H)} \]
given by $\varphi \mapsto ((\varphi_H(\sfrac{H}{H}))_{H \leq G}$ is injective with image 
\[ \Gamma^a(\underline{M}) \coloneqq \left\{ \gamma \in \bigoplus\limits_{H \leq G} \underline{M}(\sfrac{G}{H})^{W_G(H)} \mid \res_K^H(\gamma_H) = \frac{r^a_K}{r^a_H} \gamma_K \text{ for all $K \leq H \leq G$} \right\}.  \]
\end{lemma}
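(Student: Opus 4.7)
The plan is to use that $\uA^a(\sfrac{G}{H}) = A(H)$ is freely generated as an abelian group by the transitive orbits $\sfrac{H}{K}$ for $K \leq H$, and that each such generator equals $\tr_K^H(\sfrac{K}{K})$. Since any morphism of Mackey functors $\varphi\colon \uA^a \to \uM$ commutes with transfers, we have $\varphi_H(\sfrac{H}{K}) = \tr_K^H(\varphi_K(\sfrac{K}{K}))$, and so $\varphi$ is entirely recoverable from the values $\gamma_H \coloneqq \varphi_H(\sfrac{H}{H})$. This yields injectivity of the evaluation map immediately.

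To see that the image lies in $\Gamma^a(\uM)$, note first that $\sfrac{H}{H}$ is $W_G(H)$-fixed, so equivariance of $\varphi_H$ forces $\gamma_H \in \uM(\sfrac{G}{H})^{W_G(H)}$. Applying $\tres{a}{H}{K}$ to $\sfrac{H}{H}$ (taking $J = H$ in the defining formula), the expression collapses via $H \cap K = K$ and $HK = H$ to
\[ \tres{a}{H}{K}(\sfrac{H}{H}) = \frac{r_K^a}{r_H^a}\, \sfrac{K}{K}, \]
and pushing this through $\varphi$ produces exactly the defining relation $\res_K^H(\gamma_H) = (r_K^a/r_H^a)\gamma_K$ via $\res$-compatibility of $\varphi$.

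For surjectivity onto $\Gamma^a(\uM)$, given $\gamma \in \Gamma^a(\uM)$ I would define $\varphi_H(\sfrac{H}{K}) \coloneqq \tr_K^H(\gamma_K)$ and extend $\Z$-linearly. Compatibility with transfers follows from transitivity of transfers in $\uM$, and compatibility with conjugation follows from the Weyl-invariance of each $\gamma_K$ combined with equivariance of transfers. The main obstacle is verifying compatibility with the twisted restrictions, which amounts to proving
\[ \res_K^H\bigl(\tr_J^H(\gamma_J)\bigr) \;=\; \frac{r_{J \cap K}^a}{r_J^a}\cdot \left|\sfrac{H}{JK}\right|\cdot \tr_{J \cap K}^K(\gamma_{J \cap K}) \]
for all $J, K \leq H$. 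I would expand the left-hand side via the double coset formula in $\uM$. In the abelian setting every conjugate of $J$ (resp.\ $K$) in $H$ equals $J$ (resp.\ $K$), so each summand reduces to $\tr_{J \cap K}^K c_{h^{-1}} \res_{J \cap K}^J(\gamma_J)$; the Weyl-invariance of $\gamma_J$ together with the fact that $c_{h^{-1}}$ commutes with restrictions in this setting eliminates the conjugation factor, and the sum collapses to $|J \backslash H / K|\cdot \tr_{J \cap K}^K \res_{J \cap K}^J(\gamma_J)$. Using the identity $|J \backslash H / K| = |\sfrac{H}{JK}|$ and applying the defining relation of $\Gamma^a(\uM)$ to rewrite $\res_{J \cap K}^J(\gamma_J) = (r_{J \cap K}^a/r_J^a)\gamma_{J \cap K}$ produces exactly the target expression, completing the proof.
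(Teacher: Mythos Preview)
Your proof is correct and follows the same approach as the paper: both arguments use that the elements $\sfrac{H}{H}$ generate $\uA^a$ under transfers to obtain injectivity, and both verify surjectivity by defining $\varphi_H(\sfrac{H}{K}) = \tr_K^H(\gamma_K)$ and checking Mackey-functor compatibility. Your version is considerably more detailed than the paper's (which leaves the restriction/double-coset verification implicit), and your explicit use of the abelian hypothesis in that step is consistent with the paper's standing assumption in this section that $G$ is finite abelian and the Weyl actions are trivial.
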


\begin{proof}
Let $\varphi, \psi \in \Hom(\underline{A}^a,\underline{M})$ and assume that $((\varphi_H(\sfrac{H}{H}))_{H \leq G}$ is equal to $((\psi_H(\sfrac{H}{H}))_{H \leq G}$. Since the elements $(\sfrac{H}{H})_{H \leq G}$ generate $\underline{A}^a$ under the transfers, an arbitrary map of Mackey functors $\underline{A}^a \rightarrow \underline{M}$ is uniquely determined by its values on $(\sfrac{H}{H})_{H \leq G}$. This shows that $\varphi$ agrees with $\psi$.  

It is easy to see that the image of the above construction is contained in $\Gamma^a(\underline{M})$. To see that this construction is surjective, let $(\gamma_H)_{H \leq G} \in \oplus_{H \leq G} \underline{M}(\sfrac{G}{H})^{W_G(H)}$ such that $\res(\gamma_G) = \frac{r^a_K}{r^a_H} \gamma_H$ for all $K \leq H \leq G$. The map $\varphi\colon \underline{A}^a \rightarrow \underline{M}$ determined by $\varphi(\sfrac{H}{H}) = \gamma_H$ is a map of Mackey functors since $\res(\gamma_G) = \frac{r^a_K}{r^a_H} \gamma_H$. Further, the image of $\varphi$ is as desired.
\end{proof}

\begin{proposition}
\label{ProdOfTwists}
Given $a,b \in T_G$, there is an isomorphism $\underline{A}^a \boxtimes \underline{A}^b \cong \underline{A}^{ab}$. 
\end{proposition}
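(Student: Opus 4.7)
The plan is to construct explicit maps in each direction using the universal properties from \cref{MapsOutOfBoxProduct} and \cref{lem: MapsOutOfTwisted}, and verify they are mutually inverse. By \cref{MapsOutOfBoxProduct}, Mackey functor maps $\underline{A}^a \boxtimes \underline{A}^b \to \underline{A}^{ab}$ correspond to Dress pairings, while by \cref{lem: MapsOutOfTwisted} maps $\underline{A}^{ab} \to \underline{A}^a \boxtimes \underline{A}^b$ correspond to compatible families $(\gamma_H)_H$ satisfying the $r^{ab}$-scaled restriction constraint.

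For the forward map $\varphi$, define the Dress pairing on generators by
\[
\theta_H\bigl([\sfrac{H}{J_1}] \otimes [\sfrac{H}{J_2}]\bigr) \;=\; \frac{r^{ab}_{J_1 \cap J_2}}{r^a_{J_1} \cdot r^b_{J_2}} \cdot [\sfrac{H}{J_1}] \cdot [\sfrac{H}{J_2}] \quad \text{in } A(H),
\]
extended bilinearly. This coefficient is an integer by the multiplicativity $r^{ab}_J = r^a_J r^b_J$ together with the divisibilities $r^a_{J_1} \mid r^a_{J_1 \cap J_2}$ and $r^b_{J_2} \mid r^b_{J_1 \cap J_2}$ (since $J_1 \cap J_2 \leq J_1, J_2$). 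Conjugation-equivariance is vacuous because all Weyl actions on twisted Burnside Mackey functors are trivial. Restriction compatibility reduces in the abelian setting to the counting identity $|\sfrac{H}{J_1 J_2}| \cdot |\sfrac{H}{K(J_1 \cap J_2)}| = |\sfrac{H}{K J_1}| \cdot |\sfrac{H}{K J_2}| \cdot |\sfrac{K}{(K \cap J_1)(K \cap J_2)}|$ for subgroup orders, while Frobenius compatibility follows from the untwisted Frobenius reciprocity in $\underline{A}$ combined with the multiplicativity of restriction factors.

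For the inverse $\psi$, apply \cref{lem: MapsOutOfTwisted} with $\gamma_H$ the image of $[\sfrac{H}{H}] \otimes [\sfrac{H}{H}]$ under the canonical pairing into $(\underline{A}^a \boxtimes \underline{A}^b)(\sfrac{G}{H})$. Restriction in the box product acts through the twisted restrictions in each factor, producing the ratio $\tfrac{r^a_K}{r^a_H} \cdot \tfrac{r^b_K}{r^b_H} = \tfrac{r^{ab}_K}{r^{ab}_H}$, so the required constraint is satisfied. The composition $\varphi \circ \psi$ corresponds via \cref{lem: MapsOutOfTwisted} to the family $(\theta_H([\sfrac{H}{H}] \otimes [\sfrac{H}{H}]))_H$; since $r^{ab}_H / (r^a_H r^b_H) = 1$, this is precisely $([\sfrac{H}{H}])_H$, the family corresponding to the identity.

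To see $\psi \circ \varphi = \mathrm{id}$, apply \cref{MapsOutOfBoxProduct}: it suffices to show the induced Dress pairing agrees with the canonical pairing into the box product. Unfolding $[\sfrac{H}{J_1}] \otimes [\sfrac{H}{J_2}]$ via Frobenius in each factor (first pulling $\tr_{J_1}^H$ out of the left slot, then $\tr_{J_1 \cap J_2}^{J_1}$ out of the right) rewrites it as
\[
[\sfrac{H}{J_1}] \otimes [\sfrac{H}{J_2}] \;=\; \frac{r^{ab}_{J_1 \cap J_2}}{r^a_{J_1} r^b_{J_2}} \cdot |\sfrac{H}{J_1 J_2}| \cdot \tr_{J_1 \cap J_2}^H(\gamma_{J_1 \cap J_2}),
\]
which matches $\psi(\theta_H([\sfrac{H}{J_1}] \otimes [\sfrac{H}{J_2}]))$ after using the abelian product formula $[\sfrac{H}{J_1}] \cdot [\sfrac{H}{J_2}] = |\sfrac{H}{J_1 J_2}| [\sfrac{H}{J_1 \cap J_2}]$ and the identity $\psi([\sfrac{H}{L}]) = \tr_L^H(\gamma_L)$ coming from the naturality of $\psi$ under transfers. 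The main obstacle is the bookkeeping: identifying the correct scaling factor in $\theta_H$ (the naive choice of ring multiplication fails restriction compatibility), after which each verification is a routine unwinding of Frobenius relations and the multiplicativity of restriction factors.
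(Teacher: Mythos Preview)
Your argument is correct and rests on the same key formula as the paper's proof, but the packaging differs. The paper proceeds by Yoneda: it shows directly that for an arbitrary target $\underline{M}$ the Dress pairings $\underline{A}^a \times \underline{A}^b \to \underline{M}$ are in natural bijection with $\Gamma^{ab}(\underline{M})$, by observing that the Frobenius conditions in \cref{MapsOutOfBoxProduct} force any Dress pairing to satisfy
\[
\theta_H\bigl([\sfrac{H}{J}] \otimes [\sfrac{H}{K}]\bigr) \;=\; \frac{r^a_{J \cap K}\, r^b_{J \cap K}}{r^a_J\, r^b_K}\, \tr_{J \cap K}^H(\gamma_{J \cap K}),
\]
with $\gamma_L = \theta_L([\sfrac{L}{L}] \otimes [\sfrac{L}{L}])$, and that the restriction conditions then cut out exactly $\Gamma^{ab}(\underline{M})$. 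You instead specialize this bijection to two particular targets to produce explicit maps $\varphi$ and $\psi$ and then verify both composites by hand. The Yoneda route is more economical, since the ``forced'' formula makes the axiom checks and the inverse verification automatic, whereas your approach trades that abstraction for a concrete isomorphism one can write down directly; both are valid and the underlying computation (your counting identity and Frobenius unwinding) is the same content hidden behind the word ``force'' in the paper.
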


\begin{proof}
We employ the technology of Dress pairings and show that the corepresentable functor $\Hom(\underline{A}^{ab}, -) \cong \Gamma^{ab}(-)$ is naturally isomorphic to $\Hom(\underline{A}^a \boxtimes \underline{A}^b, -) \cong \Dress{\underline{A}^a}{\underline{A}^b}{-}$, where $\Gamma^{ab}(-)$ is as in \cref{lem: MapsOutOfTwisted}. 

Given $\{\gamma_H\}_{H \leq G} \in \Gamma^{ab}(\underline{M})$ that determine a map $\underline{A}^{ab} \rightarrow \underline{M}$ by \cref{lem: MapsOutOfTwisted}, we show that $\theta$ can be uniquely recovered from $\{\gamma_H\}_{H \leq G}$. The conditions required for the maps $\{\theta_H\}_{H \leq G}$ to be the components of a Dress pairing force 
\[
\theta_H(\sfrac{H}{J} \otimes \sfrac{H}{K}) = \frac{r^a_{J \cap K} \cdot r^b_{J \cap K}}{r^a_J \cdot r^b_K} \tr_{J \cap K}^H(\gamma_{J \cap K}). 
\]
By the Yoneda Lemma, we conclude that $\underline{A}^{ab}$ is isomorphic to $\underline{A}^a \boxtimes \underline{A}^b$. 
\end{proof}

Now, we investigate when two twisted Burnside Mackey functors are isomorphic in $\Mack_G$. Along the same lines as \cite[Proposition 4.6]{Angeltveit}, we have the following:

\begin{proposition}
The map $\underline{A}^{(-)}\colon \tw{G} \rightarrow (\{\text{iso. classes of Mackey functors} \}, \boxtimes, \underline{A})$ descends to a map 
\[ \prod_{H \leq G}\left( \sfrac{\Z}{[G:H]} \right)/ \{ \pm 1\} \rightarrow (\{\text{iso. classes of Mackey functors} \}, \boxtimes, \underline{A})
\] 
of commutative monoids.
\end{proposition}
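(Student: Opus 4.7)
By \cref{ProdOfTwists}, $\underline{A}^a \boxtimes \underline{A}^b \cong \underline{A}^{ab}$ and $\underline{A}^{\mathbf{1}} = \underline{A}$, so the assignment $a \mapsto [\underline{A}^a]$ is already a homomorphism of commutative monoids from $(\tw{G}, \cdot, \mathbf{1})$ to $(\{\text{iso classes of Mackey functors}\}, \boxtimes, \underline{A})$. To show this map descends through the quotient $\tw{G} \twoheadrightarrow \prod_{H \leq G}(\sfrac{\Z}{[G:H]})/\{\pm 1\}$, it remains to prove that $\underline{A}^a \cong \underline{A}^b$ whenever $a_H \equiv \pm b_H \pmod{[G:H]}$ for every $H \leq G$.

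The equivalence relation cut out on $\tw{G}$ by the quotient is generated by two elementary moves at a chosen subgroup $H_0 \leq G$: (i) the sign flip $a_{H_0} \mapsto -a_{H_0}$; and (ii) the additive shift $a_{H_0} \mapsto a_{H_0} + n \cdot [G:H_0]$ for some $n \in \Z$. In each case we construct an isomorphism $\phi\colon \underline{A}^a \to \underline{A}^b$ from the universal property of \cref{lem: MapsOutOfTwisted}: such a map is determined by a tuple $(\gamma_K)_{K \leq G} \in \bigoplus_K A(K)^{W_G(K)}$ satisfying $\tres{b}{H}{K}(\gamma_H) = (r^a_K / r^a_H)\gamma_K$ for all $K \leq H \leq G$, and the resulting map is given level-wise by $\phi_H(\sfrac{H}{J}) = \tr^H_J(\gamma_J)$.

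For move (i), write $b = \delta_{H_0} \cdot a$, where $\delta_{H_0} \in \tw{G}$ has $-1$ in the $H_0$-coordinate and $+1$ elsewhere; by \cref{ProdOfTwists} it suffices to verify $\underline{A}^{\delta_{H_0}} \cong \underline{A}$. Setting $\gamma_K := r^{\delta_{H_0}}_K \cdot \sfrac{K}{K}$, we have $r^{\delta_{H_0}}_K \in \{\pm 1\}$ and $(r^{\delta_{H_0}}_K)^2 = 1$, so the compatibility condition reduces to an identity of signs; the induced map then acts on each basis element by $\sfrac{K}{J} \mapsto r^{\delta_{H_0}}_J \cdot \sfrac{K}{J}$, a diagonal, level-wise isomorphism.

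For move (ii), we set $\gamma_K = \sfrac{K}{K}$ for every $K$ with $K \not\geq H_0$, and for $K \geq H_0$ we perturb $\gamma_K$ by an appropriate integer combination of transferred generators $\tr^K_J(\sfrac{J}{J}) = \sfrac{K}{J}$ with $J \subsetneq K$ (beginning with a $-n \cdot \sfrac{K}{H_0}$ correction at $K = G$) chosen so that the twist constraint holds for every pair $K \leq H$. Since each $\gamma_K$ has the form $\sfrac{K}{K} +$(terms in $\sfrac{K}{J}$ for $J \subsetneq K$), the resulting $\phi_H$ is lower-triangular with $1$'s on the diagonal in the basis $\{\sfrac{H}{J}\}_J$ ordered by decreasing subgroup inclusion, and therefore a level-wise isomorphism. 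The principal technical obstacle is to produce these perturbation terms simultaneously consistently across the whole subgroup lattice; this is a finite combinatorial computation in the Burnside rings $A(K)$ that generalizes Angeltveit's cyclic case \cite[Proposition 4.6]{Angeltveit}, and the hypothesis $n[G:H_0] = b_{H_0} - a_{H_0}$ is exactly what makes the required integer coefficients integral.
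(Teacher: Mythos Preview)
Your overall strategy---reduce to the two elementary moves and build isomorphisms via \cref{lem: MapsOutOfTwisted}---matches the paper's, and your treatment of the sign flip (move (i)) is correct and essentially identical to the paper's (the paper writes it directly rather than factoring through \cref{ProdOfTwists}, but the resulting map is the same diagonal sign change).

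There is, however, a genuine gap in your treatment of the additive shift (move (ii)). You assert that one can take $\gamma_K = \sfrac{K}{K}$ for all $K \not\geq H_0$ and perturb only at those $K \geq H_0$. This is not sufficient. Take any $H$ incomparable to $H_0$ and set $K = e$ (or any $K \leq H_0 \cap H$). Since $H \nleq H_0$ we have $r^a_H = r^b_H$, but since $K \leq H_0$ we have $r^b_K = r^a_K \cdot (b_{H_0}/a_{H_0})$. With your choices $\gamma_H = \sfrac{H}{H}$ and $\gamma_K = \sfrac{K}{K}$, the constraint $\tres{b}{H}{K}(\gamma_H) = (r^a_K/r^a_H)\gamma_K$ becomes $r^b_K/r^b_H = r^a_K/r^a_H$, which fails whenever $b_{H_0} \neq a_{H_0}$. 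So you must also perturb $\gamma_H$ at subgroups $H$ that are incomparable to $H_0$---indeed, at every $H \nleq H_0$, not just $H \geq H_0$.

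The paper handles this by writing down an explicit, closed-form perturbation valid for \emph{all} $H \nleq H_0$ at once:
\[
\gamma_H \;=\; \sfrac{H}{H} \;-\; \frac{r^a_{H_0 \cap H}}{a_{H_0}\cdot r^a_H}\,[G:H_0H]\cdot \sfrac{H}{H_0 \cap H},
\]
(with $\gamma_H = \sfrac{H}{H}$ for $H \leq H_0$) and then records the explicit inverse. Your proposal promises such perturbations exist as ``a finite combinatorial computation'' but never produces them; since finding a single formula that is simultaneously integral and satisfies the twist constraint across the entire subgroup lattice is the actual content of the proof, this is not a detail that can be deferred.
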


\begin{proof}
To see that $\underline{A}^{(-)}$ descends to $\prod_{H \leq G} \sfrac{\Z}{[G:H]}$, we first observe that we can reduce to the case that $a, b \in \tw{G}$ satisfy $a_H = b_H$ for all subgroups aside from some fixed $F \leq G$ and $b_F = a_F + [G:F]$. We define 
\[
\varphi_H(\sfrac{H}{H}) = 
\begin{cases}
\sfrac{H}{H} - \frac{r_{F \cap H}^a}{a_F \cdot r_H^a} \cdot [G:FH] \cdot \sfrac{H}{F \cap H} & H \nleq F \\
\sfrac{H}{H} & H \leq F
\end{cases}
\]
and note that these values satisfy the conditions of \cref{lem: MapsOutOfTwisted} so as to determine a map $\varphi\colon \underline{A}^a \rightarrow \underline{A}^b$. Similarly, it is straightforward to verify that the inverse of $\varphi$ is the map $\psi\colon \underline{A}^b \rightarrow \underline{A}^a$ determined by 
\[
\psi_H(\sfrac{H}{H}) = 
\begin{cases}
\sfrac{H}{H} + \frac{r^b_{F \cap H}}{b_F \cdot r_H^b} \cdot [G:FH] \cdot \sfrac{H}{F \cap H} & H \nleq F \\
\sfrac{H}{H} & H \leq F
\end{cases}.
\]

To see that $\underline{A}^{(-)}$ descends further to $\prod_{H \leq G} (\sfrac{\Z}{[G:H]})/\{ \pm 1\}$, we can reduce to the case that $a,b \in \tw{G}$ such that $a_H = b_H$ for $H \neq F$ and $a_F = -b_F$. Again, \cref{lem: MapsOutOfTwisted} gives a map $\varphi\colon \underline{A}^a \rightarrow \underline{A}^b$ defined by $\varphi(\sfrac{H}{H}) = \sfrac{H}{H}$ for $H \nleq F$ and $\varphi(\sfrac{H}{H}) = -\sfrac{H}{H}$ for $H \leq F$. The inverse of this map is similarly defined.
\end{proof}

\begin{corollary}\label{TwistMapIntoPic}
On invertible elements, we have a map $\underline{A}^{(-)}\colon \prod_{H \leq G} \left( \sfrac{\Z}{[G:H]} \right)^\times / \{ \pm 1 \} \rightarrow \Pic(\Mack_G)$. 
\end{corollary}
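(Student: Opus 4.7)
The proof of this corollary is essentially formal once the machinery of the preceding proposition is in hand. The plan is to observe that a homomorphism of commutative monoids automatically restricts to a homomorphism of abelian groups on the invertible elements of the source and target, and then identify these groups of units.

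First, I would invoke the preceding proposition, which gives a homomorphism of commutative monoids
\[
\underline{A}^{(-)}\colon \prod_{H \leq G} \left( \sfrac{\Z}{[G:H]} \right)/\{\pm 1\} \rightarrow \left(\{\text{iso.\ classes of Mackey functors}\}, \boxtimes, \underline{A}\right).
\]
Any such homomorphism sends invertible elements to invertible elements: if $[a] \cdot [b] = [1]$ in the domain, then by \cref{ProdOfTwists} we have $\underline{A}^a \boxtimes \underline{A}^b \cong \underline{A}^{ab} \cong \underline{A}^1 = \underline{A}$, exhibiting $\underline{A}^a$ as invertible with inverse $\underline{A}^b$. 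Next, I would identify each group of units explicitly. On the source, the units of the product monoid $\prod_{H \leq G}(\Z/[G:H])$ are $\prod_{H \leq G}(\Z/[G:H])^\times$, and since $\pm 1$ consists of units this identification descends to the quotient, yielding
\[
\left( \prod_{H \leq G}\sfrac{\Z}{[G:H]}/\{\pm 1\} \right)^\times \cong \prod_{H \leq G}\left(\sfrac{\Z}{[G:H]}\right)^\times/\{\pm 1\}.
\]
On the target, the units of the commutative monoid $(\{\text{iso.\ classes of Mackey functors}\}, \boxtimes, \underline{A})$ are by definition the elements of $\Pic(\Mack_G)$.

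Combining these two identifications yields the claimed map $\underline{A}^{(-)}\colon \prod_{H \leq G}(\Z/[G:H])^\times/\{\pm 1\} \rightarrow \Pic(\Mack_G)$. There is no serious obstacle here; the content of the corollary lies entirely in the preceding proposition (that $\underline{A}^{(-)}$ is well-defined on the quotient and multiplicative), and the step taken here is the purely categorical observation that monoid homomorphisms restrict to group homomorphisms on units.
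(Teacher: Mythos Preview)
Your proposal is correct and matches the paper's approach: the paper states this corollary without proof, treating it as the formal observation that a monoid homomorphism restricts to a group homomorphism on units. Your explicit identification of the unit groups on each side is exactly what is needed and is straightforward.
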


\begin{notation}
We set $\twinv{G} \coloneqq \prod_{H \leq G} \left( \sfrac{\Z}{[G:H]} \right)^\times / \{ \pm 1 \}$. 
\end{notation}

\subsection{A Classification of Invertible Mackey functors}

We first show that the map $\underline{A}^{(-)}$ as in \cref{TwistMapIntoPic} is an isomorphism onto the Picard group of K\"unneth Mackey functors, making use of the splitting given in \cref{PicSplitting}. One key observation is that inducing along a quotient map $G \twoheadrightarrow Q \coloneqq \sfrac{G}{N}$ preserves twisted Burnside Mackey functors. 

Throughout, we will fix a subgroup $N \leq G$ and $Q \coloneqq \sfrac{G}{N}$. We will also often use $J$ to denote $J/N$ when $J \geq N$, since there is a bijective correspondence between subgroups of $Q$ and subgroups of $G$ containing $N$. 

\begin{proposition}
\label{qIndOfTwisted}
Let $a \in \prod_{H \leq Q} \left( \sfrac{\Z}{[Q:H]} \right)^\times / \{ \pm 1 \} \cong \prod_{N \leq H \leq G} \left( \sfrac{\Z}{[G:H]} \right)^\times \{ \pm 1 \}$. Then, $\qInd_Q^G \underline{A}^a$ is isomorphic to $\underline{A}^{\hat{a}} $, where 
\[
\hat{a}_{H} = 
\begin{cases}
a_H & H \geq N \\
1 & H \ngeq N
\end{cases}.
\]
\end{proposition}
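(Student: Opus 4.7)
The plan is to invoke the Yoneda lemma after constructing a natural isomorphism $\Hom(\underline{A}^{\hat{a}}, -) \cong \Hom(\qInd_Q^G \underline{A}^a, -)$ of functors $\Mack_G \to \Ab$. Applying the adjunction $\qInd_Q^G \dashv \qRes_Q^G$ of \cref{qIndResAdj}, the right-hand side becomes $\Hom(\underline{A}^a, \qRes_Q^G(-))$, and two applications of \cref{lem: MapsOutOfTwisted} identify both $\Hom$-sets with sets of compatible tuples: the task is thus to exhibit a natural bijection $\Gamma^{\hat{a}}(\underline{M}) \cong \Gamma^a(\qRes_Q^G \underline{M})$ for every $\underline{M} \in \Mack_G$.

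The key computational input is the collapsing identity $r^{\hat{a}}_H = r^{\hat{a}}_{HN} = r^a_{HN/N}$, valid for every $H \leq G$: since $\hat{a}_K = 1$ whenever $K \ngeq N$, the product $r^{\hat{a}}_H = \prod_{K \geq H} \hat{a}_K$ only receives contributions from $K \geq HN$. Given $\gamma \in \Gamma^{\hat{a}}(\underline{M})$, I would define the associated $\delta \in \Gamma^a(\qRes_Q^G \underline{M})$ by the ``restriction of indices'' $\delta_{J/N} \coloneqq \gamma_J$ for $N \leq J \leq G$; the required $a$-restriction condition on $\delta$ is precisely the $\hat{a}$-condition on $\gamma$ for pairs $N \leq K \leq J$, after matching scalars via the collapsing identity. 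Conversely, given $\delta \in \Gamma^a(\qRes_Q^G \underline{M})$, I would define $\gamma_H \coloneqq \res^{HN}_H(\delta_{HN/N})$ for every $H \leq G$, extending $\delta$ by a single restriction step to subgroups not containing $N$.

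It then remains to verify three things: (i) $\gamma_H$ is $W_G(H)$-invariant, which follows from the inclusion $N_G(H) \subseteq N_G(HN)$ (since $N$ is normal) together with equivariance of restriction; (ii) the family $\gamma$ satisfies the full $\hat{a}$-condition, which follows from the $a$-condition on $\delta$ applied to $KN/N \leq HN/N$, transitivity of restriction through $K \leq KN \leq HN$, and the collapsing identity used to match scalar factors; and (iii) the two assignments are mutually inverse. For (iii) the nontrivial check is $\res^{HN}_H(\gamma_{HN}) = \gamma_H$, which is exactly the $\hat{a}$-condition on $\gamma$ for the pair $H \leq HN$ combined with the fact that $r^{\hat{a}}_H/r^{\hat{a}}_{HN} = 1$. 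The main obstacle, modest as it is, is consistent bookkeeping of the restriction factors; all of the verifications reduce to applying the collapsing identity $r^{\hat{a}}_H = r^{\hat{a}}_{HN}$ in the right places.
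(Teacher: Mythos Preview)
Your proposal is correct and follows essentially the same route as the paper: both arguments use the adjunction $\qInd_Q^G \dashv \qRes_Q^G$ together with the $\Gamma$-description of \cref{lem: MapsOutOfTwisted} and conclude by Yoneda. The paper simply asserts the isomorphism $\Gamma^a(\qRes_Q^G\underline{M}) \cong \Gamma^{\hat a}(\underline{M})$ in one line, whereas you spell out the explicit mutually inverse bijections and the collapsing identity $r^{\hat a}_H = r^{\hat a}_{HN}$ that justifies it; this extra detail is welcome but does not constitute a different approach.
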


\begin{proof}
For an arbitrary $\underline{M} \in \Mack_G$, we have the following chain of isomorphisms: 
\begin{eqnarray*}
\Hom(\qInd_Q^G \underline{A}^a, \underline{M}) &\cong& \Hom(\underline{A}^a, \qRes_Q^G \underline{M}) \\
&\cong&  \left\{ \gamma \in \bigoplus_{J \leq Q} (\qRes_Q^G \underline{M})(\sfrac{Q}{J})^{W_Q(J)} \mid \res_K^J \gamma_J = \frac{r^a_K}{r^a_J} \gamma_K \text{ for $K \leq J \leq Q$} \right\} \\
&\cong& \left\{\gamma \in \bigoplus_{N \leq J} \underline{M}(\sfrac{G}{J})^{W_G(J)} \mid \res_K^J \gamma_J = \frac{r^a_K}{r^a_J} \gamma_K \text{ for $N \leq K \leq J \leq Q$} \right\} \\
&\cong& \left\{ \gamma \in \bigoplus_{J \leq G} \underline{M}(\sfrac{G}{J})^{W_G(J)} \mid \res_K^J \gamma_J = \frac{r^{\hat{a}}_K}{r^{\hat{a}}_J} \gamma_K \text{ for $K \leq J \leq G$} \right\} \\
& \cong& \Hom(\underline{A}^{\hat{a}},\underline{M}). 
\end{eqnarray*}
We conclude by the Yoneda Lemma that $\qInd_Q^G \underline{A}^a \cong \underline{A}^{\hat{a}}$.
\end{proof}

We can perform a similar analysis for $N$-geometric fixed points, using the adjunction between geometric fixed points and inflation. However, one could also see this through the explicit description of $N$-geometric fixed points. Levelwise, the geometric fixed points are given by quotienting by the image of appropriate transfers. Since twisted Burnside Mackey functors have the same levelwise data and transfers of the Burnside Mackey functor, it is easy to see that this will remain true after taking geometric fixed points. Further, the restrictions in geometric fixed points are induced by the original restrictions. 

\begin{proposition} 
Let $a \in \prod_{H \leq G} \left( \sfrac{\Z}{[G:H]} \right)^\times / \{ \pm 1 \}$. Then, $\Phi^N \underline{A}^a$ is isomorphic to $\underline{A}^\alpha$, where $\alpha_J = a_J$ for $J \leq Q$. 
\end{proposition}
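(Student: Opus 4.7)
The plan is to mimic the proof of \cref{qIndOfTwisted} and combine the adjunction $\Phi^N \dashv \Inf_Q^G$ with the representability result of \cref{lem: MapsOutOfTwisted}, concluding via the Yoneda lemma. Concretely, for an arbitrary $\underline{M} \in \Mack_Q$, I will compute $\Hom_{\Mack_Q}(\Phi^N \underline{A}^a, \underline{M})$ and show it is naturally isomorphic to $\Hom_{\Mack_Q}(\underline{A}^\alpha, \underline{M})$.

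First, by adjunction we have $\Hom_{\Mack_Q}(\Phi^N \underline{A}^a, \underline{M}) \cong \Hom_{\Mack_G}(\underline{A}^a, \Inf_Q^G \underline{M})$, and by \cref{lem: MapsOutOfTwisted} this latter group is identified with $\Gamma^a(\Inf_Q^G \underline{M})$, the set of tuples $(\gamma_H)_{H \leq G}$ with $\gamma_H \in (\Inf_Q^G \underline{M})(\sfrac{G}{H})^{W_G(H)}$ satisfying $\res_K^H(\gamma_H) = (r^a_K/r^a_H)\gamma_K$ for every $K \leq H \leq G$. Next I will use the definition of inflation to observe $(\Inf_Q^G \underline{M})(\sfrac{G}{H}) = 0$ whenever $H \ngeq N$ and $(\Inf_Q^G \underline{M})(\sfrac{G}{H}) = \underline{M}(\sfrac{Q}{H/N})$ whenever $H \geq N$, with matching Weyl actions under $W_G(H) \cong W_Q(H/N)$. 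Hence any $(\gamma_H) \in \Gamma^a(\Inf_Q^G \underline{M})$ is automatically zero on subgroups not containing $N$, and the compatibility condition for a pair $K \leq H$ with $K \ngeq N$ is vacuous because both sides vanish.

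The remaining content is therefore the compatibility for pairs $N \leq K \leq H \leq G$, which is the same as pairs of subgroups of $Q$. To match this with the defining condition of $\Gamma^\alpha(\underline{M})$, the key identity to verify is
\[
r^a_K = \prod_{L \geq K} a_L = \prod_{L/N \geq K/N} \alpha_{L/N} = r^\alpha_{K/N},
\]
which holds because $L \geq K \geq N$ forces $L \geq N$, so the product over $L \geq K$ in $G$ is indexed by exactly the subgroups $L/N \geq K/N$ of $Q$. This yields a natural isomorphism $\Gamma^a(\Inf_Q^G \underline{M}) \cong \Gamma^\alpha(\underline{M})$, and applying \cref{lem: MapsOutOfTwisted} once more identifies the latter with $\Hom_{\Mack_Q}(\underline{A}^\alpha, \underline{M})$. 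The Yoneda lemma then gives $\Phi^N \underline{A}^a \cong \underline{A}^\alpha$.

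I expect no serious obstacle: the argument is a bookkeeping exercise essentially parallel to the proof of \cref{qIndOfTwisted}, and the only real content is the identity $r^a_K = r^\alpha_{K/N}$ for $K \geq N$, which is immediate. An alternative route, as sketched in the discussion preceding the proposition, would be to apply the explicit formula of \cref{GeomFixPtsFormula} levelwise and then check that the restrictions induced from $\tres{a}{H}{K}$ factor to the scaled restrictions $\tres{\alpha}{H/N}{K/N}$, using $|H/(LK)| = |(H/N)/((L/N)(K/N))|$ for $L,K \geq N$; the same index comparison $r^a_K = r^\alpha_{K/N}$ is the crux of that approach as well.
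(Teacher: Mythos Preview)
Your proposal is correct and follows precisely the approach the paper indicates: use the adjunction $\Phi^N \dashv \Inf_Q^G$ together with \cref{lem: MapsOutOfTwisted} and conclude by Yoneda, with the only computation being the identity $r^a_K = r^\alpha_{K/N}$ for $K \geq N$. The paper's own proof is a one-line sketch of exactly this argument, so you have simply filled in the details faithfully.
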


\begin{proof}
One can leverage the aforementioned adjunction to build a natural isomorphism between $\Hom(\Phi^N \underline{A}^a, \underline{M})$ and $\Hom(\underline{A}^\alpha, \underline{M})$ for an arbitrary $\underline{M} \in \Mack_Q$. 
\end{proof}

\begin{corollary}
\label{MapOfSplits}
For any $N \leq G$ and $Q \coloneqq G/N$, there is a map of split exact sequences 
\begin{center}
\begin{tikzpicture}
\node (00) at (0,0) {$\Pic(\Mack_Q)$};
\node (50) at (5,0) {$\Pic(\Mack_G)$};
\node (100) at (10,0) {$\ker(\Phi^N)$};

\node(0-2) at (0,-2) {$\prod\limits_{J \geq N} \left( \sfrac{\Z}{[G:J]} \right)^\times / \{ \pm 1 \}$};
\node(5-2) at (5,-2) {$\prod\limits_{J \leq G} \left( \sfrac{\Z}{[G:J]} \right)^\times / \{ \pm 1 \}$};
\node(10-2) at (10,-2) {$\prod\limits_{J \ngeq N} \left( \sfrac{\Z}{[G:J]} \right)^\times / \{ \pm 1 \}$};

\draw[right hook->] (00) to node[above] {$\qInd_Q^G$} (50);
\draw[->>] (50) to (100);

\draw[right hook->] (0-2) to (5-2);
\draw[->>] (5-2) to (10-2);

\draw[->] (0-2) to node[left] {$\underline{A}^{(-)}$} (00);
\draw[->] (5-2) to node[fill=white] {$\underline{A}^{(-)}$} (50);
\draw[->] (10-2) to node[right] {$\underline{A}^{(-)}$} (100);
\end{tikzpicture}.
\end{center}
\end{corollary}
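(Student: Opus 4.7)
The plan is to unpack the assertion into two commutativity claims, one for each square in the diagram, and to verify each using the computations of $\qInd_Q^G$ and $\Phi^N$ on twisted Burnside Mackey functors already established, together with \cref{ProdOfTwists}.

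First I would pin down the three vertical maps. The left vertical arrow is \cref{TwistMapIntoPic} applied to the group $Q$, using the bijection between subgroups of $Q$ and subgroups of $G$ containing $N$ together with the identity $[Q : J/N] = [G : J]$ to rewrite $\twinv{Q}$ as $\prod_{J \geq N} \bigl( \Z/[G:J] \bigr)^\times / \{\pm 1\}$. The middle arrow is \cref{TwistMapIntoPic} for $G$ itself. For the right arrow, I define it on $a \in \prod_{J \ngeq N} \bigl( \Z/[G:J] \bigr)^\times / \{\pm 1\}$ by sending $a$ to $\underline{A}^{\tilde{a}}$, where $\tilde{a} \in \twinv{G}$ is obtained by extending $a$ by $1$ on factors indexed by $J \geq N$. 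One must check this lands in $\ker(\Phi^N)$: the formula in the proposition preceding \cref{MapOfSplits} gives $\Phi^N \underline{A}^{\tilde{a}} \cong \underline{A}^\alpha$ with $\alpha_J = \tilde{a}_J = 1$ for all $J \geq N$, so $\Phi^N \underline{A}^{\tilde{a}} \cong \underline{A}$ is the unit of $\Pic(\Mack_Q)$.

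Commutativity of the left square is immediate from \cref{qIndOfTwisted}: for $a \in \prod_{J \geq N} \bigl(\Z/[G:J]\bigr)^\times / \{\pm 1\}$, we have $\qInd_Q^G \underline{A}^a \cong \underline{A}^{\hat{a}}$, and the twist $\hat{a}$ is by construction the image of $a$ under the bottom inclusion. For the right square, I use that the projection $\Pic(\Mack_G) \twoheadrightarrow \ker(\Phi^N)$ arising from the splitting in \cref{PicSplitting} is given (in multiplicative notation) by $x \mapsto x \cdot \bigl(\qInd_Q^G \Phi^N x\bigr)^{-1}$. Evaluating at $x = \underline{A}^a$ for $a \in \twinv{G}$, the formula for $\Phi^N$ on twisted Burnside Mackey functors gives $\Phi^N \underline{A}^a \cong \underline{A}^{a|_Q}$ where $(a|_Q)_{J/N} = a_J$ for $J \geq N$; then \cref{qIndOfTwisted} yields $\qInd_Q^G \underline{A}^{a|_Q} \cong \underline{A}^{a'}$ with $a'_J = a_J$ if $J \geq N$ and $a'_J = 1$ otherwise. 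Applying \cref{ProdOfTwists}, $\underline{A}^a \boxtimes (\underline{A}^{a'})^{-1} \cong \underline{A}^{a \cdot (a')^{-1}}$, and the twist $a \cdot (a')^{-1}$ equals $1$ on $J \geq N$ and $a_J$ on $J \ngeq N$. This matches precisely the image of $a$ under first projecting to $\prod_{J \ngeq N}$ and then applying the right vertical map.

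The main obstacle is purely bookkeeping: nothing is conceptually deep once the identification between subgroups of $Q$ and subgroups of $G$ containing $N$ is in hand, but one must track carefully which twists have been extended by $1$ on which factors, and ensure multiplicative versus additive notation for the group operation on $\twinv{G}$ is consistent throughout. No technical input beyond \cref{qIndOfTwisted}, \cref{ProdOfTwists}, the explicit formula for $\Phi^N \underline{A}^a$, and \cref{PicSplitting} is required.
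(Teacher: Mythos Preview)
Your proposal is correct and is precisely the argument the paper intends: the corollary is stated without proof, immediately after \cref{qIndOfTwisted} and the computation of $\Phi^N \underline{A}^a$, so the paper treats the commutativity of both squares as an immediate consequence of those two propositions together with \cref{ProdOfTwists} and \cref{PicSplitting}. You have simply made explicit the bookkeeping the paper leaves to the reader.
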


Before we proceed with the classification, we record a useful lemma concerning the image of $\underline{A}^{(-)}$ in the kernel of geometric fixed points. 

\begin{lemma}
\label{MapsOutOfKerPhi}
Let $\overline{a} \in \prod_{J \ngeq N} \left( \sfrac{\Z}{[G:J]} \right)^\times / \{ \pm 1 \}$ and $a \in \twinv{G}$ obtained by extending $\overline{a}$ by 1's. For any $\underline{P} \in \Mack_G$, we have an isomorphism 
\[ \Hom(\underline{A}^a, \underline{P}) \cong \left\{ \gamma \in \underline{P}(\sfrac{G}{G}) \oplus \bigoplus\limits_{J \ngeq N} \underline{P}(\sfrac{G}{J})^{W_G(J)} \mid \res_J^K \gamma_K = \frac{r_J^{a}}{r_K^{a}} \gamma_J \text{ for $J \leq K$} \right\}
\]   
\end{lemma}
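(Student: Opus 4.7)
The plan is to derive this description directly from \cref{lem: MapsOutOfTwisted} by exploiting the special form of the twist $a$. First I would observe that the hypothesis $a_H = 1$ for all $H \geq N$ forces $r_H^a = 1$ whenever $H \geq N$, because $r_H^a = \prod_{K \geq H} a_K$ and every $K$ with $K \geq H \geq N$ satisfies $a_K = 1$.

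Applying \cref{lem: MapsOutOfTwisted}, I would identify $\Hom(\underline{A}^a, \underline{P})$ with
\[ \Gamma^a(\underline{P}) = \left\{ \gamma \in \bigoplus_{H \leq G} \underline{P}(\sfrac{G}{H})^{W_G(H)} \mid \res_K^H \gamma_H = \tfrac{r_K^a}{r_H^a} \gamma_K \text{ for all } K \leq H \leq G \right\}, \]
and construct mutually inverse maps between $\Gamma^a(\underline{P})$ and the claimed group. The forward map is the obvious projection that retains only the $G$-component and the components $\gamma_J$ for $J \ngeq N$. The backward map takes a tuple $(\gamma_G, (\gamma_J)_{J \ngeq N})$ satisfying the stated compatibility condition and extends it to a full family by setting $\widetilde{\gamma}_H \coloneqq \res_H^G \gamma_G$ for each $H$ with $N \leq H < G$; each $\widetilde{\gamma}_H$ is automatically $W_G(H)$-invariant because $\res_H^G$ is Weyl-equivariant.

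The main verification is that the extended tuple actually lies in $\Gamma^a(\underline{P})$, i.e.\ satisfies $\res_K^H \widetilde{\gamma}_H = \tfrac{r_K^a}{r_H^a} \widetilde{\gamma}_K$ for every pair $K \leq H \leq G$. I would split into cases based on whether each of $K$, $H$ contains $N$. When $N \leq K \leq H \leq G$, both restriction factors equal $1$ and the identity reduces either to transitivity of restriction (for $H < G$) or to the definition of $\widetilde{\gamma}_K$ (for $H = G$). When $K \ngeq N$ and $N \leq H \leq G$, substituting $\widetilde{\gamma}_H = \res_H^G \gamma_G$ (or $\widetilde{\gamma}_G = \gamma_G$) rewrites the identity as $\res_K^G \gamma_G = r_K^a \gamma_K$, which is precisely the given hypothesis for the pair $(K, G)$. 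Finally, when both $K, H \ngeq N$, the identity is exactly one of the provided hypotheses.

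To see that the two maps are mutually inverse, the composition starting with the backward map is tautologous. For the other direction, I would observe that any $\gamma \in \Gamma^a(\underline{P})$ must already satisfy $\gamma_H = \res_H^G \gamma_G$ whenever $N \leq H < G$, since the $\Gamma^a$-compatibility applied with $K = H$ and enclosing subgroup $G$ reads $\res_H^G \gamma_G = \tfrac{r_H^a}{r_G^a} \gamma_H = \gamma_H$ once both restriction factors are set to $1$. I do not anticipate a deep obstacle here; the principal subtlety is the careful bookkeeping in the case analysis of the compatibility condition, which is what drives the whole argument.
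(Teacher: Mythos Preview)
Your proposal is correct and follows essentially the same approach as the paper's own proof, which is a one-line observation: combining \cref{lem: MapsOutOfTwisted} with the fact that $\res_H^G \gamma_G = r_H^a \gamma_H = \gamma_H$ for $H \geq N$ shows that the components $\gamma_H$ with $H \geq N$ are redundant. Your argument simply spells out the details of this reduction via explicit mutually inverse maps and a case analysis.
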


\begin{proof}
Combining \cref{lem: MapsOutOfTwisted} with the observation that we have $\res_H^G \gamma_G = r_H^{\overline{a}} \gamma_H = \gamma_H$ for $H \geq N$ yields the isomorphism. 
\end{proof}

\begin{theorem}
For $G$ a finite abelian group, the map $\underline{A}^{(-)}\colon \prod_{J \leq G} \left( \sfrac{\Z}{[G:J]} \right)^\times / \{ \pm 1 \} \rightarrow \Pic(\Mack_G)$ is injective. 
\end{theorem}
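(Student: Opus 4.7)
My plan is induction on $|G|$, structured around \cref{MapOfSplits}. The base case $G = 1$ is trivial. For the inductive step, fix a nontrivial normal subgroup $N \trianglelefteq G$ (available since $G$ is a nontrivial finite abelian group), set $Q = G/N$, and assume the result for $Q$. Given $[a] \in \twinv{G}$ with $\underline{A}^a \cong \underline{A}$, apply the strong symmetric monoidal functor $\Phi^N$ and the earlier identification of its effect on twisted Burnside functors to obtain $\underline{A}^\alpha \cong \underline{A}$ in $\Mack_Q$, where $\alpha = (a_J)_{J \geq N}$. The inductive hypothesis gives $[\alpha] = [1]$ in $\twinv{Q}$, and since the $\{\pm 1\}$-quotient in $\twinv{G}$ is taken factor-by-factor, this yields independent congruences $a_J \equiv \pm 1 \pmod{[G:J]}$ for each $J \geq N$. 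I then choose a representative in $\tw{G}$ with $a_J = 1$ literally for every $J \geq N$.

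Next, I unpack the isomorphism $\varphi\colon \underline{A}^a \xrightarrow{\cong} \underline{A}$ via \cref{MapsOutOfKerPhi}: it is given by data $\gamma_G \in A(G)$ and $\gamma_J \in A(J)$ for $J \ngeq N$ satisfying $r_J^a \gamma_J = \res_J^G \gamma_G$. Writing $\gamma_G = \sum_{H \leq G} d_H \sfrac{G}{H}$ with $d_H \in \Z$ and using $\varphi_H(\sfrac{H}{K}) = \tr_K^H \gamma_K$, each $\varphi_H$ is upper triangular in the basis $\{\sfrac{H}{K}\}_{K \leq H}$, so bijectivity at every level forces its diagonal entries (the $\sfrac{K}{K}$-coefficients of $\gamma_K$) to be $\pm 1$. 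Using the abelian restriction formula $\res_K^G \sfrac{G}{H} = [G:KH]\,\sfrac{K}{K \cap H}$, this pins down the linear system
\[
\sum_{H \geq K} d_H\,[G:H] \;=\; \epsilon_K\, r_K^a \qquad \text{for every } K \leq G,
\]
with signs $\epsilon_K \in \{\pm 1\}$ chosen freely at each level.

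I conclude by downward induction on the subgroup lattice. At $K = G$ the equation reads $d_G = \epsilon_G$. At general $K$, the integrality of $d_K$ in $d_K[G:K] = \epsilon_K r_K^a - \sum_{H > K} d_H[G:H]$ forces the congruence $\epsilon_K r_K^a \equiv \sum_{H > K} d_H[G:H] \pmod{[G:K]}$, and the inductively-determined forms of the $d_H$ at larger $H$ collapse the right-hand side modulo $[G:K]$ to $\pm 1$, giving $r_K^a \equiv \pm 1 \pmod{[G:K]}$ for every $K \ngeq N$. A second downward induction on the lattice, using $r_K^a = a_K \prod_{L > K} a_L$ together with the normalization $a_L = 1$ for $L \geq N$ and the just-established $a_L \equiv \pm 1 \pmod{[G:L]}$ for $L > K$, $L \ngeq N$, then yields $a_K \equiv \pm 1 \pmod{[G:K]}$. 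Each $[a_K]$ is thus trivial in the $K$-factor of $\twinv{G}$, so $[a] = [1]$, as required. The main obstacle is the lattice bookkeeping behind the ``collapse to $\pm 1$'' in the downward induction on the $d_H$-system; the finite-abelian hypothesis is essential here, since the explicit restriction formula $\res_K^G\sfrac{G}{H} = [G:KH]\sfrac{K}{K \cap H}$ is what keeps the recursion tractable.
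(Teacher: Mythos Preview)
Your approach diverges from the paper's. After the common first move (pass to $Q=G/N$ and use induction there to handle the factors with $J\geq N$), the paper does \emph{not} attempt any direct analysis of the remaining factors for that fixed $N$; instead it lets $N$ range over \emph{all} nontrivial proper subgroups. For each such $N$ the inductive hypothesis on $Q$ already gives $a_N=\alpha_{N/N}\equiv\pm1\pmod{[G:N]}$, so varying $N$ disposes of every nontrivial proper factor with no computation in the mark system whatsoever.

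Your elementary route has a genuine gap. The second downward induction fails as stated: from $r_K^a\equiv\pm1\pmod{[G:K]}$ together with $a_L\equiv\pm1\pmod{[G:L]}$ for $L>K$ you cannot conclude $a_K\equiv\pm1\pmod{[G:K]}$, because $[G:L]$ is a \emph{proper} divisor of $[G:K]$ and so $\prod_{L>K}a_L$ is not pinned down modulo $[G:K]$. Concretely, take $G=C_8$, $N=C_4$, $K=e$: after normalising $a_{C_4}=a_{C_8}=1$ and establishing $a_{C_2}\equiv\pm1\pmod4$, the choice $a_{C_2}=3$ gives $\prod_{L>e}a_L=3\not\equiv\pm1\pmod8$, so $r_e^a\equiv\pm1\pmod8$ would only yield $a_e\equiv\pm3\pmod8$. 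The first ``collapse to $\pm1$'' is likewise unjustified: already for $G=C_2\times C_2$ it amounts (with all signs $+1$) to the identity $x+y-1\equiv xy\pmod4$ for odd $x,y$, which is true but not for the reason your sketch supplies. A careful direct telescoping of the full system (bypassing the intermediate $r_K^a$) may be salvageable --- in the $C_8$ case the level-$e$ equation rearranges to $a_{C_2}(\epsilon_e a_e-\epsilon_{C_2})\in8\Z$, giving $a_e\equiv\pm1\pmod8$ directly --- but the ``lattice bookkeeping'' you correctly flag as the main obstacle is substantial and is exactly what the paper's varying-$N$ trick sidesteps.
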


\begin{proof}
Let \(a \in \prod_{H \leq G} \left( \sfrac{\Z}{[G:H]} \right)^\times / \{ \pm 1 \}\) and suppose \(\phi\colon \uA^a \to \uA\) is an isomorphism. If \(G = e\), the theorem holds trivially. So, we suppose \(|G| = n > 1\) and induct on the number of prime factors of \(n\) (counted with multiplicity). For the base case, \(n = p\) is prime, so \(G = C_p\). The only possibly non-trivial component of the twist \(a\) is \(a_e\) in this case. However, invertibility of \(\phi\) and commutativity with transfers implies that \(\phi_{C_p}(\sfrac{C_p}{C_p}) = n \cdot \sfrac{C_p}{e} \pm \sfrac{C_p}{C_p}\) for some integer \(n\). Commutativity with restrictions then implies \(\pm a_e = np \pm 1\), so \(a_e \equiv \pm 1 \pmod{[C_p: e]}\). Thus, \(\uA^{(-)}\) is injective in this case.

If \(n\) has more than one prime factor, let \(N \leq G\) be any non-trivial proper subgroup and define
\begin{align*}
\hat{a}_J &= 
\begin{cases}
a_J & J \geq N \\
1 & \text{ else}
\end{cases} \\
\overline{a}_J &= 
\begin{cases}
a_J & J \ngeq N \\
1 & \text{ else}
\end{cases}.
\end{align*}
It follows that $\underline{A}^a$ decomposes as $\underline{A}^{\hat{a}} \boxtimes \underline{A}^{\overline{a}}$. Further, \cref{qIndOfTwisted} shows that $\underline{A}^{\hat{a}}$ is $\qInd_Q^G(\underline{A}^{\alpha})$, where ${\alpha}_{\sfrac{J}{N}} = a_J$ for $J \geq N$. It is also straightforward to check that $\Phi^N \underline{A}^{\overline{a}}$ is isomorphic to $\underline{A}$. Now, \cref{PicSplitting} and injectivity of $\qInd_Q^G$ imply that both $\underline{A}^{\alpha}$ and $\underline{A}^{\overline{a}}$ are trivial. Since \(|N|\) and \(|Q|\) are proper divisors of \(|G|\), they have fewer prime factors, so our induction hypothesis yields
\[ a_N = {\alpha}_{\sfrac{N}{N}} \equiv \pm 1 \pmod{[Q:e]}. \]
But \([Q : e] = [G : N]\), so we conclude that $\underline{A}^{(-)}$ is injective. 
\end{proof}

\begin{corollary}\label{ClassificationThm}
For $G$ a finite abelian group, the embedding
\[ \underline{A}^{(-)}\colon \twinv{G} \rightarrow \Pic(\Mack_G) \]
is an isomorphism. That is, every invertible Mackey functor is isomorphic to some twisted Burnside Mackey functor. 
\end{corollary}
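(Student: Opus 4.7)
The plan is to leverage the injectivity established in the preceding theorem together with the identifications of Picard groups developed earlier in this paper, reducing surjectivity to a matching of finite cardinalities. The key observation is that for $G$ abelian we can directly compare $|\twinv{G}|$ with $|\Pic(\Mack_G)|$ via the folklore isomorphism.

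First, I would observe that since $G$ is finite abelian every subgroup is normal, so each conjugacy class of subgroups is a singleton. Applying the tom Dieck--Petrie computation of $\Pic(A(G))$ recalled in the section on the Picard group of $G$-spectra, one gets
\[
\Pic(A(G)) \cong \prod_{H \leq G} \left(\sfrac{\Z}{[G:H]}\right)^\times\!\big/\{\pm 1\} = \twinv{G}
\]
as abelian groups, and in particular these groups are finite, since each factor $(\sfrac{\Z}{[G:H]})^\times/\{\pm 1\}$ is finite and the subgroup lattice of $G$ is finite. The corollary to the main theorems further identifies $\Pic(A(G))$ with $\Pic(\Mack_G)$, so $|\Pic(\Mack_G)| = |\twinv{G}|$.

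Combining this with the preceding theorem, which furnishes an injective group homomorphism $\underline{A}^{(-)}\colon \twinv{G} \hookrightarrow \Pic(\Mack_G)$, we obtain a monomorphism between two finite abelian groups of equal order, which is automatically an isomorphism.

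There is no substantive obstacle at this step; the genuine work lies upstream, in the inductive injectivity argument of the preceding theorem and in the folklore identification $\Pic(\Mack_G) \cong \Pic(A(G))$ established via the K\"unneth analysis of $\Pic(\Ho(\Sp^G))$. Should one prefer an intrinsic argument that avoids invoking the tom Dieck--Petrie count, one could alternatively induct on $|G|$ and use the splitting $\Pic(\Mack_G) \cong \Pic(\Mack_Q) \oplus \ker(\Phi^N)$ from \cref{PicSplitting} together with the compatibility diagram of \cref{MapOfSplits} to reassemble every invertible Mackey functor from its geometric fixed points and an induction along a proper quotient; however, this is unnecessary for the corollary as stated.
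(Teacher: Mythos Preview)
Your proposal is correct and matches the paper's own proof essentially verbatim: the paper simply observes that $\twinv{G}$ is finite of cardinality equal to that of $\Pic(A(G)) \cong \Pic(\Mack_G)$, so the injection from the preceding theorem is an isomorphism. Your additional remarks about the abelian hypothesis collapsing conjugacy classes and the optional inductive alternative are accurate elaborations but not needed for the argument.
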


\begin{proof}
The cardinality of $\twinv{G}$ is finite and equal to that of $\Pic(A(G)) \cong \Pic(\Mack_G)$.  
\end{proof}

\subsection{Twisted Burnside Modules} 

Using the twisted Burnside Mackey functors we have constructed, we can produce explicit representatives of the classes in $\Pic(A(G))$ via an adjunction relating modules over the Burnside ring to Mackey functors. 

\begin{definition}
Given an $A(G)$-module $M$, we can build a Mackey functor $\underline{A} \otimes_{A(G)} M$ which is defined at level $\sfrac{G}{H}$ to be $A(H) \otimes _{A(G)} M$. The restrictions and transfers in $\underline{A} \otimes_{A(G)} M$ are given by tensoring the identity on $M$ with the restrictions and transfers in $\underline{A}$, respectively. 
\end{definition}

This tensoring construction has a right adjoint which is given by evaluating Mackey functors at $\sfrac{G}{G}$. 

\begin{definition}
There is an evaluation functor $\ev_{\sfrac{G}{G}}\colon \Mack_G \rightarrow \Mod_{A(G)}$ given on objects by $\ev_{\sfrac{G}{G}}(\underline{M}) \coloneqq \underline{M}(\sfrac{G}{G})$. We view $\ev_{\sfrac{G}{G}}\underline{M}$ as an $A(G)$-module via 
\[ \sfrac{G}{H} \cdot m = \tr_H^G \res_H^G m \]
for any $m \in \underline{M}(\sfrac{G}{G})$. 
\end{definition}

\begin{proposition}\label{EvalTensorAdjunction}
Let $G$ be a finite group. There is an adjunction 
\[ \adjunction{$\Mod_{A(G)}$}{$\Mack_G$}{$\underline{A} \otimes_{A(G)} (-)$}{$\ev_{\sfrac{G}{G}}$}. \]
Further, the composite $\ev_{\sfrac{G}{G}}(\underline{A} \otimes_{A(G)} (-))$ is naturally isomorphic to the identity. 
\end{proposition}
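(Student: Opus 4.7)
The plan is to interpret this as a special case of the tensor-Hom adjunction, after identifying the Burnside Mackey functor with a representable. First, I would observe that $\underline{A}(\sfrac{G}{H}) = A(H) \cong \Hom_{\B_G}(\sfrac{G}{H}, \sfrac{G}{G})$ naturally in $\sfrac{G}{H}$, so $\underline{A}$ is precisely the representable Mackey functor at $\sfrac{G}{G}$. The Yoneda Lemma then gives a natural bijection $\Hom_{\Mack_G}(\underline{A}, \underline{N}) \cong \underline{N}(\sfrac{G}{G})$, and a short check in the Burnside category confirms this is $A(G)$-linear, where $A(G)$ acts on the left-hand side via its identification with $\End_{\Mack_G}(\underline{A})$ and on the right-hand side via the evaluation formula $\sfrac{G}{H} \cdot n = \tr_H^G \res_H^G n$. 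The key computation is that $\tr_H^G \res_H^G(\sfrac{G}{G}) = \sfrac{G}{H}$ in $A(G)$, so both actions implement functoriality of $\underline{N}$ applied to the morphism $\sfrac{G}{H} \to \sfrac{G}{G}$ in $\B_G$ defining $\sfrac{G}{H} \in A(G)$.

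To extend this to arbitrary $A(G)$-modules $M$, I would construct the counit $\epsilon_{\underline{N}}\colon \underline{A} \otimes_{A(G)} \underline{N}(\sfrac{G}{G}) \to \underline{N}$ explicitly: at level $\sfrac{G}{H}$, send $a \otimes n \mapsto \underline{N}(a)(n)$, where $a \in A(H)$ is regarded as a morphism $\sfrac{G}{H} \to \sfrac{G}{G}$ in $\B_G$. Well-definedness over $A(G)$ follows because the composition $r \circ a$ in $\B_G$ corresponds to $\res_H^G(r) \cdot a$ in $A(H)$ (by the pullback computation in the Burnside category), so the $A(G)$-balancing condition is precisely functoriality of $\underline{N}$ applied to this composite. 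Compatibility of $\epsilon_{\underline{N}}$ with restrictions, transfers, and conjugations in $\underline{N}$ likewise reduces to functoriality, since these structure maps on $\underline{A}$ correspond to pre-composing $a$ with the appropriate morphisms in $\B_G$. The unit $M \to A(G) \otimes_{A(G)} M$ sending $m \mapsto 1 \otimes m$ is tautological, and the triangle identities are then immediate.

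For the second claim, evaluating the Mackey functor $\underline{A} \otimes_{A(G)} M$ at $\sfrac{G}{G}$ yields $A(G) \otimes_{A(G)} M \cong M$ via the canonical multiplication isomorphism $r \otimes m \mapsto r \cdot m$, naturally in $M$. To see that this isomorphism respects the $A(G)$-module structures, one uses again the identity $\tr_H^G \res_H^G(\sfrac{G}{G}) = \sfrac{G}{H}$, so that the Mackey-structure-induced action of $\sfrac{G}{H} \in A(G)$ on $1 \otimes m$ is $\tr_H^G \res_H^G(1) \otimes m = \sfrac{G}{H} \otimes m = 1 \otimes (\sfrac{G}{H} \cdot m)$, which matches the original action on $M$.

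The main bookkeeping obstacle will be the $A(G)$-linearity verification in the first paragraph: carefully checking that the module structure on $\underline{N}(\sfrac{G}{G})$ coming from the paper's evaluation formula agrees with the ``Yoneda'' action via $\End_{\Mack_G}(\underline{A}) \cong A(G)$. Once this compatibility is in place, both the adjunction and the ``identity'' composite are formal consequences with no further content.
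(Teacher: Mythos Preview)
The paper states this proposition without proof, so there is no argument to compare against. Your approach is correct and is the standard one: identifying $\underline{A}$ with the representable $\B_G(-,\sfrac{G}{G})$ reduces the adjunction to Yoneda (or, equivalently, to the explicit unit/counit you write down), and the second claim is the tautology $A(G) \otimes_{A(G)} M \cong M$. The one substantive check you flag---that the endomorphism-ring action of $A(G) \cong \End_{\Mack_G}(\underline{A})$ on $\underline{N}(\sfrac{G}{G})$ agrees with the paper's formula $\sfrac{G}{H} \cdot n = \tr_H^G \res_H^G n$---is exactly right, and your verification via $\tr_H^G \res_H^G(\sfrac{G}{G}) = \sfrac{G}{H}$ (i.e., the span $\sfrac{G}{G} \leftarrow \sfrac{G}{H} \to \sfrac{G}{G}$ representing $\sfrac{G}{H} \in A(G)$) is correct.
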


\begin{proposition}
The functor $\underline{A} \otimes_{A(G)} (-)\colon \Mod_{A(G)} \rightarrow \Mack_G$ is strong symmetric monoidal. 
\end{proposition}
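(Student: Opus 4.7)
The plan is to verify the two structural isomorphisms demanded of a strong symmetric monoidal functor: the unit isomorphism and the multiplicative (tensorator) isomorphism, then explain why we can check the latter on a single test object.

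For the unit, apply $\underline{A} \otimes_{A(G)} (-)$ to $A(G)$, which is the tensor unit of $\Mod_{A(G)}$. Evaluating levelwise at $G/H$ gives $A(H) \otimes_{A(G)} A(G) \cong A(H) = \underline{A}(G/H)$, and the transfers, restrictions, and conjugations are transported directly from those of $\underline{A}$. Hence $\underline{A} \otimes_{A(G)} A(G) \cong \underline{A}$, and $\underline{A}$ is the tensor unit in $(\Mack_G, \boxtimes)$.

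For the tensorator, first construct a natural comparison morphism
\[
\mu_{M,N}\colon (\underline{A} \otimes_{A(G)} M) \boxtimes (\underline{A} \otimes_{A(G)} N) \longrightarrow \underline{A} \otimes_{A(G)} (M \otimes_{A(G)} N).
\]
Such a $\mu$ arises formally from the adjunction $\underline{A} \otimes_{A(G)} (-) \dashv \ev_{G/G}$ of \cref{EvalTensorAdjunction} together with the fact that $\ev_{G/G}$ is lax symmetric monoidal: the Dress-pairing description of $\boxtimes$ (\cref{MapsOutOfBoxProduct}) gives a canonical $A(G)$-bilinear map $\ev_{G/G}\underline{M} \otimes_{A(G)} \ev_{G/G}\underline{N} \to \ev_{G/G}(\underline{M} \boxtimes \underline{N})$ at level $G/G$, and mate-conjugating this lax structure produces $\mu$ as the oplax structure on the left adjoint.

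To show $\mu_{M,N}$ is an isomorphism, reduce to the case $M = N = A(G)$. Both sides preserve colimits separately in $M$ and $N$: the functor $\underline{A} \otimes_{A(G)} (-)$ preserves colimits as a left adjoint (by \cref{EvalTensorAdjunction}), while $\boxtimes$ preserves colimits in each variable because $\Mack_G$ is closed symmetric monoidal (so $\boxtimes$ has a right adjoint in each slot). Since any $A(G)$-module $M$ fits into a presentation
\[
\bigoplus_{j \in J} A(G) \;\rightrightarrows\; \bigoplus_{i \in I} A(G) \;\longrightarrow\; M,
\]
and $\mu$ is natural, it suffices to check that $\mu_{A(G), A(G)}$ is an isomorphism. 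But this map is just the identity $\underline{A} \boxtimes \underline{A} \xrightarrow{\sim} \underline{A}$ under the unit isomorphism established above, so $\mu$ is an isomorphism in general.

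The main obstacle is purely bookkeeping: producing $\mu$ as a natural transformation in a clean, canonical way and confirming that the oplax structure so obtained really does reduce to the unit isomorphism on free modules of rank one. This is formal once one takes seriously that the box product is Day convolution and that $\ev_{G/G}$ is lax monoidal; the only subtlety is unwinding the mate construction to see that $\mu_{A(G),A(G)}$ coincides with the unitor, after which colimit preservation finishes the proof.
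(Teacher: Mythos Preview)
Your argument is correct in outline and takes a genuinely different route from the paper. The paper dispatches the tensorator in one stroke via Yoneda: for arbitrary $\underline{P}\in\Mack_G$ it runs the chain
\[
\Hom(\underline{A}\otimes_{A(G)}(M\otimes_{A(G)}N),\underline{P})
\cong \Hom(M\otimes_{A(G)}N,\underline{P}(\sfrac{G}{G}))
\cong \cdots
\cong \Hom((\underline{A}\otimes_{A(G)}M)\boxtimes(\underline{A}\otimes_{A(G)}N),\underline{P}),
\]
using the adjunction of \cref{EvalTensorAdjunction} and the tensor--hom adjunctions in both $\Mod_{A(G)}$ and $\Mack_G$. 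You instead build a comparison map and then check it is an isomorphism by reducing to $M=N=A(G)$ via colimit preservation and free presentations. Your approach is more concrete about what the comparison map is and why it is invertible; the paper's is slicker and avoids ever naming the map.

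One small correction: mating the lax monoidal structure on $\ev_{G/G}$ yields an \emph{oplax} structure on the left adjoint, i.e.\ a map
\[
\underline{A}\otimes_{A(G)}(M\otimes_{A(G)}N)\longrightarrow(\underline{A}\otimes_{A(G)}M)\boxtimes(\underline{A}\otimes_{A(G)}N),
\]
which is the reverse of your $\mu$. This does not damage the argument---your colimit reduction works equally well for this map, since on $M=N=A(G)$ it is again the unitor---but you should either reverse the arrow in the displayed $\mu$ or construct $\mu$ directly as a Dress pairing at each level rather than invoke the mate calculus.
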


\begin{proof}
We proceed by the Yoneda Lemma. Let $M,N \in \Mod_{A(G)}$ and $\underline{P}$ be an arbitrary Mackey functor. We have 
\begin{eqnarray*}
\Hom(\underline{A} \otimes_{A(G)} (M \otimes_{A(G)} N),\underline{P}) &\cong& \Hom(M \otimes_{A(G)} N, \underline{P}(\sfrac{G}{G})) \\
&\cong& \Hom(M, \Hom(N, \underline{P}(\sfrac{G}{G}))) \\
&\cong& \Hom(M, \underline{\Hom}(\underline{A} \otimes_{A(G)} N, \underline{P})(\sfrac{G}{G})) \\
&\cong & \Hom(\underline{A} \otimes_{A(G)} M, \underline{\Hom}(\underline{A} \otimes N, \underline{P})) \\
&\cong & \Hom((\underline{A} \otimes_{A(G)} M) \boxtimes (\underline{A} \otimes_{A(G)} N), \underline{P}).
\end{eqnarray*} 
\end{proof}

\begin{definition}
For $a \in \tw{G}$, the \textit{$a$-twisted Burnside module}, denoted $A(G)^a$, is the (left) $A(G)$-module given by $\ev_{\sfrac{G}{G}} \underline{A}^a$. Unpacking the module structure gives an explicit formula for the action of $A(G)$ on $A(G)^a$:
\[
\sfrac{G}{H} \cdot \sfrac{G}{J} = \tr_H^G\tres{a}{G}{H} \sfrac{G}{J} = \frac{r_{J \cap K}}{r_H} \cdot [G:JK] \cdot \sfrac{J}{J \cap K}. 
\]   
\end{definition}

\begin{warning}
\label{Warning}
The functor $\ev_{\sfrac{G}{G}}\colon \Mack_G \rightarrow \Mod_{A(G)}$ is \textit{not} a strong symmetric monoidal functor. This fails already when $G=C_2$: let $F\Z_{\sigma}$ denote the fixed points Mackey functor associated to the $C_2$-module $\Z_\sigma$. Then, $\ev_{\sfrac{G}{G}} F\Z_\sigma = 0$ and one can check via Lewis' description of the box product that $F\Z_{\sigma} \boxtimes F\Z_{\sigma}$ is isomorphic to $\underline{\Z}^*$, the dual constant Mackey functor at $\Z$. Notably, the evaluation of $\Z^*$ at $\sfrac{G}{G}$ is non-zero. 
\end{warning}

The failure of $\ev_{\sfrac{G}{G}}$ to be strong symmetric monoidal means that we cannot deduce that $\ev_{\sfrac{G}{G}}$ descends to a map on Picard groups. In particular, we cannot conclude immediately that twisted Burnside modules are invertible. It turns out that restricting our attention to K\"unneth Mackey functors greatly remedies this problem, and the first step in this restricted analysis is an instantiation of another result of \cite{FLM}.

\begin{prop}\label{EvalOnKunneth}
The functor $\ev_{\sfrac{G}{G}}\colon \Mack_G \rightarrow \Mod_{A(G)}$ restricts to a \textit{strong symmetric monoidal functor} 
\[ \ev_{\sfrac{G}{G}}\colon k\Mack_G \rightarrow k\Mod_{A(G)}. \] 
\end{prop}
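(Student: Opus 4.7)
The plan is to show that $\ev_{\sfrac{G}{G}}$ and $\underline{A} \otimes_{A(G)} (-)$ restrict to \emph{inverse equivalences} of categories between $k\Mack_G$ and $k\Mod_{A(G)}$. Once that is established, the fact that the left adjoint is already strong symmetric monoidal forces its (quasi-)inverse to be canonically strong symmetric monoidal as well, yielding the desired conclusion for $\ev_{\sfrac{G}{G}}$.

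First, I would check that both functors really do restrict to the subcategories of K\"unneth objects. The functor $\underline{A} \otimes_{A(G)} (-)$ is strong symmetric monoidal, so it sends the unit $A(G)$ to $\underline{A}$, and as a left adjoint it preserves finite coproducts and retracts; hence it carries retracts of finite copowers of $A(G)$ to retracts of finite copowers of $\underline{A}$, i.e., $k\Mod_{A(G)} \to k\Mack_G$. Dually, $\ev_{\sfrac{G}{G}}$ is additive between additive categories (and in particular preserves finite biproducts), sends $\underline{A}$ to $A(G)$, and preserves retracts, so it carries $k\Mack_G$ to $k\Mod_{A(G)}$.

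Next, I would verify the unit and counit of the adjunction of \cref{EvalTensorAdjunction} are isomorphisms on these subcategories. The unit $\eta_M\colon M \to \ev_{\sfrac{G}{G}}(\underline{A} \otimes_{A(G)} M)$ is already a natural isomorphism on all of $\Mod_{A(G)}$ by the last assertion of \cref{EvalTensorAdjunction}. For the counit $\epsilon_{\underline{M}}\colon \underline{A} \otimes_{A(G)} \ev_{\sfrac{G}{G}}(\underline{M}) \to \underline{M}$, the map is an isomorphism tautologically when $\underline{M} = \underline{A}$ (since both sides equal $\underline{A}$). Additivity of both functors in the composite then extends this to any finite direct sum of copies of $\underline{A}$, and naturality plus the 2-out-of-3 behavior of isomorphisms under retracts extends it further to any retract of such a direct sum; that is, to every K\"unneth Mackey functor.

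Finally, the equivalence $\underline{A} \otimes_{A(G)} (-)\colon k\Mod_{A(G)} \xrightarrow{\simeq} k\Mack_G$ is strong symmetric monoidal, so its quasi-inverse $\ev_{\sfrac{G}{G}}\colon k\Mack_G \to k\Mod_{A(G)}$ inherits a canonical strong symmetric monoidal structure: the comparison map
\[
\ev_{\sfrac{G}{G}}(\underline{M}) \otimes_{A(G)} \ev_{\sfrac{G}{G}}(\underline{N}) \longrightarrow \ev_{\sfrac{G}{G}}(\underline{M} \boxtimes \underline{N})
\]
can be obtained by applying $\ev_{\sfrac{G}{G}}$ to the counits and using strong monoidality of the left adjoint, and it is an isomorphism because all the intervening counits are isomorphisms on K\"unneth objects. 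I expect the main subtlety to be the verification that the counit is an isomorphism on retracts; this is where additivity of $\underline{A} \otimes_{A(G)} (-)$ (being a left adjoint, and thus preserving the relevant split idempotents) is essential, in contrast to the failure of $\ev_{\sfrac{G}{G}}$ to be monoidal on all of $\Mack_G$ illustrated in \cref{Warning}.
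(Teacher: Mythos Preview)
Your argument is correct, but it takes a different route from the paper. The paper's proof is a single-line appeal to \cite[Proposition~1.2(iii)]{FLM}: that result characterizes K\"unneth objects as precisely those dualizable objects $X$ for which the K\"unneth comparison $\pi_0(Y)\otimes\pi_0(X)\to\pi_0(Y\otimes X)$ is an isomorphism for all dualizable $Y$, and here $\pi_0$ specializes to $\ev_{\sfrac{G}{G}}$. So the strong monoidality of $\ev_{\sfrac{G}{G}}$ on K\"unneth objects is read off directly from that characterization.

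By contrast, you prove strong monoidality by first establishing that the adjunction of \cref{EvalTensorAdjunction} restricts to an \emph{equivalence} on K\"unneth subcategories (checking that the counit is an isomorphism on $\underline{A}$, then on finite sums, then on retracts), and then transporting the strong monoidal structure from the left adjoint to its quasi-inverse. This is a perfectly valid and self-contained argument; in fact it simultaneously proves the content of the two subsequent propositions in the paper (that $\underline{A}\otimes_{A(G)}(-)$ preserves K\"unneth objects, and \cref{EvalEquiv}). In the paper's logical ordering those results come \emph{after} \cref{EvalOnKunneth} and use it, whereas you have effectively reversed the dependency. The trade-off: the paper's proof is shorter if one is willing to invoke \cite{FLM}, while yours is more elementary and yields the equivalence of categories as a byproduct rather than a separate statement.
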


\begin{proof}
In \cite[Proposition 1.2(iii)]{FLM}, consider K\"unneth objects in $\Mack_G$ and note that $\pi_0$ translates to $\ev_{\sfrac{G}{G}}$.  
\end{proof}

Similarly, it is easy to verify the following: 

\begin{prop}
The functor $\underline{A} \otimes_{A(G)} (-)$ preserves K\"unneth objects, hence it restricts to a strong symmetric monoidal functor 
\[ k\Mod_{A(G)} \rightarrow k\Mack_G. \] 
\end{prop}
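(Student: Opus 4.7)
The plan is to verify that $\underline{A} \otimes_{A(G)}(-)$ sends K\"unneth objects to K\"unneth objects, after which strong symmetric monoidality of the restricted functor follows immediately from strong symmetric monoidality of the unrestricted functor established in the preceding proposition.

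First I would use the adjunction from \cref{EvalTensorAdjunction}: since $\underline{A} \otimes_{A(G)} (-)$ is a left adjoint, it preserves all colimits, and in particular finite coproducts. Next, since this functor is strong symmetric monoidal, it preserves the monoidal unit up to isomorphism, giving $\underline{A} \otimes_{A(G)} A(G) \cong \underline{A}$. Combining these two facts, for any natural number $n$ we have
\[
\underline{A} \otimes_{A(G)} \Bigl( \bigoplus_{i=1}^n A(G) \Bigr) \;\cong\; \bigoplus_{i=1}^n \underline{A}.
\]

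Now suppose $M \in k\Mod_{A(G)}$, so there exist maps $s\colon M \to \bigoplus_{i=1}^n A(G)$ and $r\colon \bigoplus_{i=1}^n A(G) \to M$ with $r \circ s = \id_M$. Applying the functor $\underline{A} \otimes_{A(G)} (-)$, which preserves retracts as any functor does, we obtain that $\underline{A} \otimes_{A(G)} M$ is a retract of $\bigoplus_{i=1}^n \underline{A}$. Hence $\underline{A} \otimes_{A(G)} M$ is a K\"unneth object in $\Mack_G$, and $\underline{A} \otimes_{A(G)} (-)$ indeed restricts to a functor $k\Mod_{A(G)} \to k\Mack_G$.

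Finally, the restricted functor inherits its strong symmetric monoidal structure (associator, unitors, structure isomorphisms $\underline{A} \otimes_{A(G)} (M \otimes_{A(G)} N) \cong (\underline{A} \otimes_{A(G)} M) \boxtimes (\underline{A} \otimes_{A(G)} N)$ and $\underline{A} \otimes_{A(G)} A(G) \cong \underline{A}$) directly from the unrestricted strong symmetric monoidal functor, since $k\Mod_{A(G)}$ and $k\Mack_G$ are full symmetric monoidal subcategories. There is no real obstacle here: the argument is essentially formal, relying only on the adjunction, the strong symmetric monoidality already established, and the defining property of K\"unneth objects as retracts of finite copowers of the unit.
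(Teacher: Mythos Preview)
Your proof is correct and follows essentially the same approach as the paper: both arguments show that $\underline{A} \otimes_{A(G)}(-)$ takes finite copowers of $A(G)$ to finite copowers of $\underline{A}$ and preserves retracts. The only cosmetic difference is that the paper invokes additivity of the functor to justify preservation of finite direct sums, whereas you invoke its being a left adjoint; both are valid and lead to the same conclusion.
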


\begin{proof}
Since $\underline{A} \otimes_{A(G)} (-)$ is an additive, strong symmetric monoidal functor, we have 
\[ \underline{A} \otimes_{A(G)} \left( \bigoplus_i A(G) \right) \cong \bigoplus_i \underline{A}. \]
Further, $\underline{A} \otimes (-)$ preserves retractions.
\end{proof}
 
\begin{prop}
\label{EvalEquiv}
The functor $\ev_{\sfrac{G}{G}}\colon k\Mack_G \rightarrow k\Mod_{A(G)}$ is a symmetric monoidal equivalence of categories. Consequently, it gives an isomorphism of Picard groups
\[ \ev_{\sfrac{G}{G}}\colon \Pic(k\Mack_G) \rightarrow \Pic(k\Mod_{A(G)}) \cong \Pic(A(G)). \] 
\end{prop}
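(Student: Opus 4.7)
The plan is to leverage the adjunction recorded in \cref{EvalTensorAdjunction}, which gives $\underline{A} \otimes_{A(G)} (-) \dashv \ev_{\sfrac{G}{G}}$ together with a natural isomorphism $\ev_{\sfrac{G}{G}} \circ (\underline{A} \otimes_{A(G)} (-)) \cong \id$. The latter identification witnesses that the unit of this adjunction is a natural isomorphism, so the proof of the equivalence reduces to verifying that the counit
\[ \epsilon_{\underline{M}}\colon \underline{A} \otimes_{A(G)} \underline{M}(\sfrac{G}{G}) \to \underline{M} \]
is a natural isomorphism on every K\"unneth Mackey functor $\underline{M}$. Both functors in the adjunction restrict to functors between the K\"unneth subcategories (by the two propositions immediately preceding the statement), so the restricted adjunction genuinely lives between $k\Mack_G$ and $k\Mod_{A(G)}$.

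To verify that the counit is an isomorphism on K\"unneth objects, I would proceed in three stages. First, $\epsilon_{\underline{A}}$ is an isomorphism because $\underline{A} \otimes_{A(G)} A(G) \cong \underline{A}$, which is immediate from the strong symmetric monoidality of $\underline{A} \otimes_{A(G)} (-)$. Second, by additivity, $\epsilon_{\underline{A}^{\oplus n}}$ is an isomorphism for all finite $n$. Third, for an arbitrary K\"unneth $\underline{M}$, the definition furnishes a section $s\colon \underline{M} \hookrightarrow \underline{A}^{\oplus n}$ and a retraction $r\colon \underline{A}^{\oplus n} \twoheadrightarrow \underline{M}$; naturality of $\epsilon$ then presents $\epsilon_{\underline{M}}$ as a retract of $\epsilon_{\underline{A}^{\oplus n}}$ in the arrow category. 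Since retracts of isomorphisms are isomorphisms in any category, $\epsilon_{\underline{M}}$ is an isomorphism as well. This completes the proof of the categorical equivalence; the symmetric monoidal enhancement is then supplied by \cref{EvalOnKunneth}, and the induced Picard group isomorphism follows from \cref{PicFunctorial}.

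The main obstacle here is conceptual rather than technical. As highlighted in the warning preceding the statement, $\ev_{\sfrac{G}{G}}$ is not strong symmetric monoidal on all of $\Mack_G$, so there is no formal reason to expect it to induce a map on Picard groups in general. The entire argument above is only possible because K\"unneth objects are well-behaved with respect to both functors in the adjunction; this is precisely the content of the two propositions sandwiched between the warning and the statement, and recognizing that this good behavior is exactly what the retract argument needs is the key insight.
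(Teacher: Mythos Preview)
Your argument is correct. Both you and the paper hinge on the adjunction of \cref{EvalTensorAdjunction} and the fact that the two functors restrict to the K\"unneth subcategories, but the execution differs slightly. The paper proves the equivalence by showing $\ev_{\sfrac{G}{G}}$ is fully faithful (since a K\"unneth Mackey functor is generated at level $\sfrac{G}{G}$, maps out of it are determined there) and essentially surjective (via the right inverse $\underline{A}\otimes_{A(G)}(-)$). You instead show directly that the unit and counit of the restricted adjunction are isomorphisms, handling the counit by the retract-of-an-isomorphism trick. Your route is a bit more structural and arguably cleaner, since the ``retract'' step simultaneously handles both fullness and faithfulness without needing to unpack what it means for a K\"unneth object to be generated at the top; the paper's route is marginally more concrete about \emph{why} the K\"unneth hypothesis matters. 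Either way the symmetric monoidal upgrade and the Picard isomorphism follow exactly as you indicate from \cref{EvalOnKunneth} and \cref{PicFunctorial}.
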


\begin{proof}
To see that $\ev_{\sfrac{G}{G}}$ is fully faithful, we observe that K\"unneth objects in $\Mack_G$ are finitely generated as Mackey functors at $\underline{M}(\sfrac{G}{G})$. This implies that maps out of a K\"unneth Mackey functor are uniquely determined by their evaluation at $\sfrac{G}{G}$. On the other hand, $\ev_{\sfrac{G}{G}}$ is essentially surjective by virtue of having a right inverse as in \cref{EvalTensorAdjunction}. 
\end{proof}

As twisted $A(G)$-modules are defined precisely as the $(\sfrac{G}{G})$-evaluation of twisted Burnside Mackey functors, \cref{ClassificationThm} furnishes a classification of invertible $A(G)$-modules through the previous results. 

\begin{theorem}
Let $G$ be a finite abelian group. The map
\[ \prod_{H \leq G} \left( \sfrac{\Z}{[G:H]} \right)^\times / \{\pm 1 \} \xrightarrow{\underline{A}^{(-)}} \Pic(k\Mack_G) \xrightarrow{\ev_{\sfrac{G}{G}}} \Pic(\Mod_{A(G)}) \]
is an isomorphism with composite given by 
\[ A(G)^{(-)}\colon \prod_{H \leq G} \left( \sfrac{\Z}{[G:H]} \right)^\times / \{\pm 1 \} \rightarrow \Pic(\Mod_{A(G)}). \]
That is, every invertible $A(G)$-module is isomorphic to a twisted Burnside module. 
\end{theorem}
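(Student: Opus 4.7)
The plan is to derive this theorem directly by composing three isomorphisms already established earlier in the paper: the classification $\underline{A}^{(-)}\colon \twinv{G} \xrightarrow{\cong} \Pic(\Mack_G)$ of \cref{ClassificationThm}, the identification $\Pic(k\Mack_G) \cong \Pic(\Mack_G)$ of \cref{BigTheorem}, and the strong symmetric monoidal equivalence $\ev_{\sfrac{G}{G}}\colon k\Mack_G \xrightarrow{\simeq} k\Mod_{A(G)}$ of \cref{EvalEquiv}. Combining the first two immediately shows that $\underline{A}^{(-)}$ lifts to an isomorphism $\twinv{G} \xrightarrow{\cong} \Pic(k\Mack_G)$, since every invertible $G$-Mackey functor is already a K\"unneth object.

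Next I would apply $\ev_{\sfrac{G}{G}}$, which by \cref{EvalEquiv} induces an isomorphism $\Pic(k\Mack_G) \xrightarrow{\cong} \Pic(k\Mod_{A(G)})$. To translate this back to $\Pic(\Mod_{A(G)}) = \Pic(A(G))$, I would invoke the standard algebraic fact that an invertible module over a commutative ring is necessarily finitely generated and projective, hence a K\"unneth object in $\Mod_{A(G)}$; this is precisely the identification of K\"unneth objects with dualizable objects noted just after the statement of Theorem D in the introduction. Consequently $\Pic(k\Mod_{A(G)}) = \Pic(\Mod_{A(G)})$, and composing the three isomorphisms yields the desired isomorphism $\twinv{G} \xrightarrow{\cong} \Pic(\Mod_{A(G)})$.

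To identify the composite with $A(G)^{(-)}$, I would simply unwind the definition: $A(G)^a \coloneqq \ev_{\sfrac{G}{G}}(\underline{A}^a)$ by construction, so the composite $\ev_{\sfrac{G}{G}} \circ \underline{A}^{(-)}$ sends $a \in \twinv{G}$ to the twisted Burnside module $A(G)^a$. There is no real obstacle once the three structural results are in hand; the theorem is essentially a formal corollary. The only subtle point worth calling out is the implicit passage from $\Pic(k\Mod_{A(G)})$ to $\Pic(\Mod_{A(G)})$, which is what allows the Mackey-enriched picture to be translated into the purely algebraic Picard group of the Burnside ring.
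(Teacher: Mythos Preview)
Your proposal is correct and follows essentially the same approach as the paper's proof, which simply cites \cref{ClassificationThm} and \cref{EvalEquiv} and notes that the composite is $A(G)^{(-)}$ by definition. Your additional explicit invocations of \cref{BigTheorem} and of the identification $\Pic(k\Mod_{A(G)}) = \Pic(\Mod_{A(G)})$ merely spell out passages that the paper leaves implicit in its terse two-line argument.
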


\begin{proof}
$\underline{A}^{(-)}$ is an isomorphism by \cref{ClassificationThm} and $\ev_{\sfrac{G}{G}}$ induces an isomorphism on Picard groups by \cref{EvalEquiv}. The composite is as stated by definition. 
\end{proof}

\bibliographystyle{plain}
\bibliography{refs}

\end{document}